\DeclareMathOperator{\val}{val}
\DeclareMathOperator{\rep}{rep}
\DeclareMathOperator{\pref}{pref}
\newcommand\U{\mathcal{U}^\beta}
\newcommand\Llim{\mathcal{L}^\beta}
\newcommand\Aut{\mathcal{A}_\beta}
\newcommand\Lang{L_{U_\beta}}
\newcommand\Alp{A_{U_\beta}}
\newcommand\AEns{\mathcal{A}^\beta}
\theoremstyle{plain}
\newtheorem{theorem}{Theorem}
\newtheorem{lemma}[theorem]{Lemma}
\newtheorem{corollary}[theorem]{Corollary}
\newtheorem{prop}[theorem]{Proposition}
\theoremstyle{definition}
\newtheorem{definition}[theorem]{Definition}
\newtheorem{example}[theorem]{Example}
\newtheorem{remark}[theorem]{Remark}
\def\undertilde#1{\mathord{\vtop{\ialign{##\crcr
$\hfil\displaystyle{#1}\hfil$\crcr\noalign{\kern1.5pt\nointerlineskip}
$\hfil\tilde{}\hfil$\crcr\noalign{\kern1.5pt}}}}}
\newcommand{\seqnum}[1]{\href{http://oeis.org/#1}{\underline{#1}}}
\begin{document}
\begin{center}
\vskip 1cm{\LARGE\bf Convergence of Pascal-Like Triangles in Parry--Bertrand Numeration Systems}  \vskip 1cm
\large
Manon Stipulanti\footnote{Corresponding author.}\\
University of Li\`ege \\ 
Department of Mathematics \\ 
All\'ee de la D\'ecouverte 12 (B37) \\
4000 Li\`ege, Belgium \\
\href{mailto:M.Stipulanti@uliege.be}{\tt M.Stipulanti@uliege.be} \\

\end{center}

\vskip .2 in

\begin{abstract}
We pursue the investigation of generalizations of the Pascal triangle based on binomial coefficients of finite words. 
These coefficients count the number of times a finite word appears as a subsequence of another finite word. 
The finite words occurring in this paper belong to the language of a Parry numeration system satisfying the Bertrand property, i.e., we can add or remove trailing zeroes to valid representations.  
It is a folklore fact that the Sierpi\'{n}ski gasket is the limit set, for the Hausdorff distance, of a convergent sequence of normalized compact blocks extracted from the classical Pascal triangle modulo~$2$. In a similar way, we describe and study the subset of $[0, 1] \times [0, 1]$ associated with the latter generalization of the  Pascal triangle modulo a prime number.
\end{abstract}

\bigskip
\hrule
\bigskip

\noindent 2010 {\it Mathematics Subject Classification}: 11A63, 11A67, 11B65, 11K16, 68R15.

\noindent \emph{Keywords:}
Binomial coefficients of words; generalized Pascal triangles; $\beta$-expansions; Perron numbers; Parry numbers; Bertrand numeration systems.

\bigskip
\hrule
\bigskip


\section{Introduction}\label{sec:intro}
Several generalizations and variations of the Pascal triangle exist and lead to interesting combinatorial, geometrical or dynamical properties~\cite{BNS, BS, vonHPS, JRV, LRS1}.
This paper is inspired by a series of papers based on generalizations of Pascal triangles to finite words~\cite{LRS1, LRS2, LRS3, LRS4}. 

\subsection{Binomial coefficients of words and Pascal-like triangles}
In this short subsection, we briefly introduce the concepts we use in this paper. 
For more definitions, see section~\ref{sec:background}.
A {\em finite word} is a finite sequence of letters belonging to a finite set called the {\em alphabet}. 
The \emph{binomial coefficient} $\binom{u}{v}$ of two finite words $u$ and $v$ is the number of times $v$ occurs as a subsequence of $u$ (meaning as a ``scattered'' subword). 

Let $A$ be a totally ordered alphabet, and let $L \subset A^*$ be an infinite language over $A$. 
We order the words of $L$ by increasing genealogical order and we write $L = \{ w_0 < w_1 < w_2 < \cdots \}$.
Associated with the language $L$, we define a Pascal-like triangle $\mathrm{P}_L : \mathbb{N} \times \mathbb{N} \to \mathbb{N}$ represented as an infinite table. 
The entry $\mathrm{P}_L(m,n)$ on the $m$th row and $n$th column of $\mathrm{P}_L$ is the integer $\binom{w_m}{w_n}$.

\subsection{Previous work}

Let $b$ be an integer greater than $1$.
We let $\rep_b(n)$ denote the (greedy) base-$b$ expansion of $n\in\mathbb{N}\setminus\{0\}$ starting with a non-zero digit. 
We set $\rep_b(0)$ to be the empty word denoted by $\varepsilon$. 
We let 
$$L_b=\{1,\ldots,b-1\}\{0,\ldots,b-1\}^*\cup\{\varepsilon\}$$ 
be the set of base-$b$ expansions of the non-negative integers. 
In~\cite{LRS1}, we study the particular case of $L=L_b$. 
The increasing genealogical order thus coincides with the classical order in $\mathbb{N}$. 
For example, see Table~\ref{tab:tp} for the first few values\footnote{Some of the objects discussed here are stored in Sloane's \emph{On-Line Encyclopedia of Integer Sequences}~\cite{Sloane}. See sequences \seqnum{A007306},
\seqnum{A282714},
\seqnum{A282715},
\seqnum{A282720},
\seqnum{A282728},
\seqnum{A284441},
and \seqnum{A284442}.} of $\mathrm{P}_2$. 
\begin{table}[h!t]
$$\begin{array}{r|cccccccc}
&\varepsilon&1&10&11&100&101&110&111\\
\hline
 \varepsilon&\mathbf{1} & 0 & 0 & 0 & 0 & 0 & 0 & 0 \\
 1&\mathbf{1} & \mathbf{1} & 0 & 0 & 0 & 0 & 0 & 0 \\
 10&1 & 1 & 1 & 0 & 0 & 0 & 0 & 0 \\
 11&\mathbf{1} & \mathbf{2} & 0 & \mathbf{1} & 0 & 0 & 0 & 0 \\
 100&1 & 1 & 2 & 0 & 1 & 0 & 0 & 0 \\
 101&1 & 2 & 1 & 1 & 0 & 1 & 0 & 0 \\
 110&1 & 2 & 2 & 1 & 0 & 0 & 1 & 0 \\
 111&\mathbf{1} & \mathbf{3} & 0 & \mathbf{3} & 0 & 0 & 0 & \mathbf{1} \\
\end{array}$$
\caption{The first few values in the generalized Pascal triangle $\mathrm{P}_2$ (\seqnum{A282714}).}
    \label{tab:tp}
\end{table}
Clearly, $\mathrm{P}_b$ contains several subtables corresponding to the usual Pascal triangle. 
For instance, it contains $(b-1)$ copies of the usual Pascal triangle obtained when only considering words of the form $a^m$ with $a\in\{1,\ldots,b-1\}$ and $m\ge 0$ since $\binom{a^m}{a^n}=\binom{m}{n}$. In Table~\ref{tab:tp}, a copy of the classical Pascal triangle is written in bold. 

Considering the intersection of the lattice $\mathbb{N}^2$ with $[0, 2^n ] \times [0, 2^n ]$, the first $2^n$ rows and columns of the usual Pascal triangle modulo~$2$ $(\binom{i}{j} \bmod{2})_{0 \le i,j<2^n}$ provide a coloring of this lattice. 
If we normalize this compact set by a homothety of ratio $1/2^n$, we get a sequence of subsets of $[0, 1] \times [0, 1]$ which converges, for the Hausdorff distance, to the Sierpi\'{n}ski gasket when $n$ tends to infinity.
In the extended context described above, the case when $b=2$ gives similar results and the limit set, generalizing the Sierpi\'{n}ski gasket, is described using a simple combinatorial property called $(\star)$~\cite{LRS1}. 

Inspired by~\cite{LRS1}, we study the sequence $(S_b(n))_{n\ge 0}$ which counts, on each row $m$ of $\mathrm{P}_b$, the number of words of $L_b$ occurring as subwords of the $m$th word in $L_b$, i.e., $S_b(m)=\#\{n\in\mathbb{N} \mid \mathrm{P}_b(m,n)>0 \}$. 
This sequence is shown to be $b$-regular~\cite{LRS2, LRS4}. 
We also consider the summatory function $(A_b(n))_{n\ge 0}$ of the sequence $(S_b(n))_{n\ge 0}$ and study its behavior~\cite{LRS3, LRS4}. 

So far, the setting is the one of integer bases. As a first extension, we handle the case of the Fibonacci numeration system, i.e., with the language $L_F=\{\varepsilon\}\cup 1\{0,01\}^*$~\cite{LRS2, LRS3}. 
It turns out that the sequence $(S_F(n))_{n\ge 0}$ counting the number of words in $L_F$ occurring as subwords of the $n$th word in $L_F$ has properties similar to those of $(S_b(n))_{n\ge 0}$. 
Finally, the summatory function $(A_F(n))_{n\ge 0}$ of the sequence $(S_F(n))_{n\ge 0}$ has a behavior similar to the one of $(A_b(n))_{n\ge 0}$.

\subsection{Our contribution}

The Fibonacci numeration system belongs to an extensively studied family of numeration systems called \textit{Parry--Bertrand numeration systems}, which are based on particular sequences $(U(n))_{n\ge 0}$ (the precise definitions are given later). 
In this paper, we fill the gap between integer bases and the Fibonacci numeration systems by extending the results of~\cite{LRS1} to every Parry--Bertrand numeration system. 
First, we generalize the construction of Pascal-like triangles to every Parry--Bertrand numeration system. 
For a given Parry--Bertrand numeration system based on a particular sequence $(U(n))_{n\ge 0}$, we consider the intersection of the lattice $\mathbb{N}^2$ with $[0, U(n) ] \times [0, U(n) ]$. 
Then the first $U(n)$ rows and columns of the corresponding generalized Pascal triangle modulo~$2$ provide a coloring of this lattice regarding the parity of the corresponding binomial coefficients. 
If we normalize this compact set by a homothety of ratio $1/U(n)$, we get a sequence in $[0, 1] \times [0, 1]$ which converges, for the Hausdorff distance, to a limit set when $n$ tends to infinity.
Again, the limit set is described using a simple combinatorial property extending the one from~\cite{LRS1}. 

Compared to the integer bases, new technicalities have to be taken into account to generalize Pascal triangles to a large class of numeration systems. 
The numeration systems occurring in this paper essentially have two properties. 
The first one is that the language of the numeration system comes from a particular automaton. 
The second one is the Bertrand condition which allows to delete or add ending zeroes to valid representations. 

This paper is organized as follows. 
In Section~\ref{sec:background}, we collect necessary background. 
Section~\ref{sec:star} is devoted to a special combinatorial property that extends the $(\star)$ condition from~\cite{LRS1}. 
This new condition allows us to define a sequence of compact sets, which is shown to be a Cauchy sequence in Section~\ref{sec:AEns}. 
In Section~\ref{sec:limite}, using the property of the latter sequence, we define a limit set which is the analogue of the Sierpi\'{n}ski gasket in the classical framework.
We show that the sequence of subblocks of the generalized Pascal triangle modulo~$2$ in a Parry--Bertrand numeration converges to this new limit set. 
As a final remark, we consider the latter sequence of compact sets modulo any prime number. 


\section{Background and particular framework}\label{sec:background}

We begin this section with well-known definitions from combinatorics on words; see, for instance,~\cite{Rigo1}.
Let $A$ be an alphabet, i.e., a finite set.  
The elements of $A$ are called \textit{letters}. 
A finite sequence over $A$ is called a \textit{finite word}. 
The length of a finite word $w$, denoted by $|w|$, is the number of letters belonging to $w$. 
The only word of length $0$ is the empty word $\varepsilon$. 
The set of finite words over the
alphabet $A$ including the empty word (resp., excluding the empty word) is denoted by $A^*$ (resp., $A^+$). 
The set of words of length $n$ over $A$ is denoted by $A^n$. 
If $u$ and $v$ are two finite words belonging to $A^*$, the \textit{binomial coefficient} $\binom{u}{v}$ of $u$ and $v$ is the number of occurrences of $v$ as a subsequence of $u$, meaning as a scattered subword.  
The sequences over $A$ indexed by $\mathbb{N}$ are the \emph{infinite words} over $A$.
If $w$ is a finite non-empty word over $A$, we let $w^\omega:=www\cdots$ denote the infinite word obtained by concatenating infinitely many copies of $w$. 
If $L \subset A^*$ is a set of finite words and $u\in A^*$ is a finite word, we let $u^{-1}.L$ denote the set of words $\{ v \in A^* \mid uv \in L \}$.
Let $A$ be totally ordered. 
If $u, v \in A^*$ are two words, we say that \emph{$u$ is less than $v$ in the genealogical order} and we write $u < v$ if either $|u| < |v|$, or if $|u| = |v|$ and there exist words $p, q, r \in A^*$ and letters $a, b\in A$ with
$u = paq$, $v = pbr$ and $a < b$. By $u \le v$, we mean that either $u < v$, or $u = v$.

In the first part of this section, we gather two results on binomial coefficients of finite words and integers.
For a proof of the first lemma, we refer the reader to~\cite[Chap.~6]{Lot}. 

\begin{lemma}\label{lem:lothaire-bin}
Let $A$ be a finite alphabet. Let $u,v \in A^*$ and let $a,b \in A$. Then we have
    $$\binom{ua}{vb}=\binom{u}{vb}+\delta_{a,b}\binom{u}{v}$$
where $\delta_{a,b}$ is equal to $1$ if $a=b$, $0$ otherwise.
\end{lemma}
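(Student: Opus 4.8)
The plan is to establish the identity by a direct combinatorial classification of the occurrences of $vb$ as a scattered subword of $ua$, rather than by an algebraic manipulation. Write $\ell = |u|$ and think of an occurrence of $vb$ in $ua$ as an increasing selection of positions in $ua$ whose letters, read left to right, spell $vb$. The governing idea is to partition the set of all such occurrences according to a single binary criterion: whether or not the selection uses the last position of $ua$, which carries the letter $a$.

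First I would count the occurrences that avoid the final position. These are precisely the occurrences of $vb$ that lie entirely within the prefix $u$, so by definition there are $\binom{u}{vb}$ of them. Next I would count the occurrences that do use the final position. In any scattered-subword occurrence the selected positions increase strictly, so the rightmost selected position of $ua$ is matched against the rightmost letter of $vb$, namely $b$. Hence an occurrence using the final letter $a$ can exist only if $a = b$. When $a = b$, removing this final matched pair sets up a bijection between such occurrences and the occurrences of the truncated word $v$ inside $u$, of which there are $\binom{u}{v}$. Encoding the constraint $a=b$ with the Kronecker symbol, the second class contributes exactly $\delta_{a,b}\binom{u}{v}$.

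Since the two classes are disjoint and exhaust all occurrences of $vb$ in $ua$, summing the two counts yields
$$\binom{ua}{vb} = \binom{u}{vb} + \delta_{a,b}\binom{u}{v},$$
as claimed. I do not expect a genuine obstacle here: the statement is the fundamental Pascal-type recurrence for binomial coefficients of words, and the classification above is both exhaustive and disjoint. The only point deserving explicit attention is the claim that a selected final position must correspond to the final letter of $vb$; this follows from the strict monotonicity of the index selection and should be stated with care. The degenerate cases $u = \varepsilon$ and $v = \varepsilon$ are already covered by the same argument (with the conventions $\binom{\varepsilon}{\varepsilon}=1$ and $\binom{\varepsilon}{w}=0$ for $w \neq \varepsilon$), and verifying them is routine. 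An alternative route would be a short induction on $|u|$, but the bijective argument is cleaner and makes the $\delta_{a,b}$ term transparent.
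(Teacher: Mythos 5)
Your proof is correct. The paper itself does not prove this lemma --- it simply cites Lothaire, Chapter 6 --- and your combinatorial classification (splitting the occurrences of $vb$ in $ua$ according to whether the final position of $ua$ is selected, observing that a selected final position must be matched to the final letter $b$ by strict monotonicity of the index sequence, and then exhibiting the bijection with occurrences of $v$ in $u$ when $a=b$) is precisely the standard argument for this Pascal-type recurrence; the degenerate cases are handled correctly by the stated conventions.
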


Let us also recall Lucas' theorem relating classical binomial coefficients modulo a prime number $p$ with base-$p$ expansions. 
See~\cite[p. 230]{Lucas} or~\cite{Fine}. 
Note that in the following statement, if the base-$p$ expansions of $m$ and $n$ are not of the same length, then we pad the shortest with leading zeroes.

\begin{theorem}\label{thm:lucas}
Let $m$ and $n$ be two non-negative integers and let $p$ be a prime number. 
If 
$$m=m_kp^k + m_{k-1}p^{k-1} + \cdots + m_1 p + m_0$$ and $$n=n_kp^k + n_{k-1}p^{k-1} + \cdots + n_1 p + n_0$$ with $m_i,n_i\in\{0,\ldots,p-1\}$ for all $i$, then the following congruence relation holds $$\binom{m}{n}\equiv \prod _{i=0}^k \binom {m_i}{n_i} \bmod p,$$
using the following convention: $\binom {m}{n}= 0$ if $m < n$.
\end{theorem}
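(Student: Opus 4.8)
The plan is to use the generating function approach in the polynomial ring $\mathbb{F}_p[x]$, which settles the entire claim in one shot rather than digit by digit. First I would record the ``freshman's dream'': since $p \mid \binom{p}{k}$ for every $k$ with $0 < k < p$ (the factor $p$ in the numerator $p!$ is not cancelled by the denominator $k!(p-k)!$), the binomial theorem gives
$$(1+x)^p \equiv 1 + x^p \pmod p$$
as polynomials over $\mathbb{F}_p$. Raising this congruence to the power $p$ repeatedly and arguing by induction on $i$, I obtain $(1+x)^{p^i} \equiv 1 + x^{p^i} \pmod p$ for every $i \ge 0$.

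Next I would exploit the base-$p$ expansion $m = \sum_{i=0}^k m_i p^i$ to factor the left-hand side:
$$(1+x)^m = \prod_{i=0}^k \left((1+x)^{p^i}\right)^{m_i} \equiv \prod_{i=0}^k \left(1 + x^{p^i}\right)^{m_i} \pmod p.$$
The coefficient of $x^n$ on the left is $\binom{m}{n}$ by definition. On the right, expanding each factor by the binomial theorem as $\left(1 + x^{p^i}\right)^{m_i} = \sum_{j_i=0}^{m_i} \binom{m_i}{j_i} x^{j_i p^i}$, the coefficient of $x^n$ in the product is the sum of $\prod_{i=0}^k \binom{m_i}{j_i}$ over all tuples $(j_0, \ldots, j_k)$ with $0 \le j_i \le m_i$ and $\sum_{i=0}^k j_i p^i = n$.

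The key step, and the only place that requires a genuine argument, is that because each $j_i$ lies in $\{0, \ldots, p-1\}$ (as $m_i \le p-1$), the constraint $\sum_{i=0}^k j_i p^i = n$ has, by the uniqueness of the base-$p$ representation of $n$, the single solution $j_i = n_i$ for all $i$. Hence the coefficient of $x^n$ on the right collapses to the single product $\prod_{i=0}^k \binom{m_i}{n_i}$, and comparing coefficients yields the desired congruence. Finally I would verify that the convention $\binom{m_i}{n_i} = 0$ when $m_i < n_i$ is automatically consistent: if some $n_i > m_i$, then $x^{n_i p^i}$ does not occur among the available exponents, so no admissible tuple matches $n$ and both sides vanish modulo $p$. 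The main obstacle is thus conceptual rather than computational, namely making the uniqueness-of-digits matching precise; once that is in place, the theorem follows by reading off a single coefficient.
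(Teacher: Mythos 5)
The paper does not actually prove this statement: it is quoted as a classical result with pointers to Lucas and Fine, so there is no internal proof to compare against. Your generating-function argument is the standard modern proof of Lucas' theorem and it is correct and complete. The congruence $(1+x)^{p}\equiv 1+x^{p}\pmod{p}$ from $p\mid\binom{p}{k}$ for $0<k<p$, the induction giving $(1+x)^{p^{i}}\equiv 1+x^{p^{i}}\pmod{p}$, the factorization of $(1+x)^{m}$ along the base-$p$ digits of $m$, and the identification of the unique contributing tuple $(j_0,\ldots,j_k)=(n_0,\ldots,n_k)$ via uniqueness of base-$p$ digits (valid because each $j_i\le m_i\le p-1$) are all sound; you also correctly check that the case $n_i>m_i$ for some $i$ makes both sides vanish, consistent with the stated convention, and the case $m<n$ is covered since $(1+x)^m$ has degree $m<n$. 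Nothing further is needed.
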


In the last part of this section, we introduce the setting of particular numeration systems that are used in this paper: the Parry--Bertrand numeration systems. 
First of all, we recall several definitions and results about representations of real numbers. 
For more details, see, for instance,~\cite[Chap.~2]{BR},~\cite[Chap.~7]{Lot2} or~\cite{Rigo2}.

\begin{definition}\label{def:beta-exp}
Let $\beta \in \mathbb{R}_{>1}$ and let $A_\beta=\{0, 1, \ldots, \lceil \beta \rceil -1 \}$.
Every real number $x \in [0, 1)$ can be written as a series
$$
x = \sum_{j=1}^{+\infty} c_j \beta^{-j}
$$
where $c_j \in A_\beta$ for all $j\ge 1$, and where $ \lceil \cdot \rceil$ denotes the \emph{ceiling function} defined by $ \lceil x \rceil = \inf \{ z\in \mathbb{Z} \mid z \ge x \}$. 
The infinite word $c_1 c_2 \cdots$ is called a \emph{$\beta$-representation} of $x$. 
Among all the $\beta$-representations of $x$, we define the \emph{$\beta$-expansion} $d_\beta(x)$ of $x$ obtained in a greedy way, i.e., for all $j\ge 1$, we have  $c_j \beta^{-j} + c_{j+1} \beta^{-j-1} + \cdots  < \beta^{-j+1}$.
We also make use of the following convention: if $w=w_n\cdots w_0$ is a finite word (resp., $w=w_1 w_2 \cdots$ is an infinite word) over $A_\beta$, the notation $0.w$ has to be understood as the real number $\sum_{j=0}^{n} w_j \beta^{j-n-1}$ (resp., $\sum_{j=1}^{+\infty} w_j \beta^{-j}$); it actually corresponds to the value of the word $w$ in base $\beta$. 

In an analogous way, the \emph{$\beta$-expansion} $d_\beta(1)$ of $1$ the following infinite word over $A_\beta$
$$
d_\beta(1) := 
\left\{
    \begin{array}{cl}
        (\beta-1)^\omega, & \mbox{if } \beta \in \mathbb{N}; \\
         ( \lceil \beta \rceil -1 ) d_\beta ( 1 - (\lceil \beta \rceil -1)/ \beta), & \mbox{otherwise.}
    \end{array}
\right.
$$ 
In other words, if $\beta$ is not an integer, the first digit of the $\beta$-expansion of $1$ is $\lceil \beta \rceil -1$ and the other digits are derived from the $\beta$-expansion of $1 - (\lceil \beta \rceil -1)/ \beta$.

Let $d_\beta (1) = (t_n )_{n\ge 1}$ be the $\beta$-expansion of $1$. 
Observe that $t_1 = \lceil \beta \rceil -1$. 
We define the \emph{quasi-greedy $\beta$-expansion} $d_\beta^*(1)$ of 1 as follows. 
If $d_\beta (1)= t_1 \cdots t_m$ is finite, i.e., $t_m \neq 0$ and $t_j = 0$ for all $j > m$, then $d_\beta^*(1) = (t_1 \cdots t_{m-1} (t_m - 1))^\omega$, otherwise $d_\beta^*(1) = d_\beta(1)$.
\end{definition} 

A real number $\beta > 1$ is a \emph{Parry number} if $d_\beta(1)$ is ultimately periodic. If $d_\beta(1)$ is finite, then $\beta$ is called a \emph{simple Parry number}.
In this case, Proposition~\ref{pro:auto} gives an easy way to decide if an infinite word is the $\beta$-expansion of a real number~\cite{Parry}. 
For more details, see, for instance,~\cite[Chap.~7]{Lot2}. 
First, let us recall the definition of a deterministic finite automaton. 

\begin{definition}
A deterministic finite automaton (DFA),
over an alphabet $A$ is given by a $5$-tuple $\mathcal{A} = (Q, q_0 , A, \delta, F )$ 
where $Q$ is a finite set of states, $q_0\in Q$ is the initial state, $\delta : Q \times A \mapsto Q$ is the transition function and $F \subset Q$ is the set of final states (graphically represented by two concentric circles). 
The map $\delta$ can be extended to $Q \times A^*$ by setting $\delta(q, \varepsilon) = q$ and $\delta(q, wa) = \delta(\delta(q, w), a)$ for all $q \in Q$,
$a \in A$ and $w \in A^*$.
We also say that a word $w$ is \emph{accepted} by the automaton if $\delta(q_0,w)\in F$. 
\end{definition}

\begin{prop}\label{pro:auto}
Let $\beta \in \mathbb{R}_{>1}$ be a Parry number. 
\begin{itemize}
\item[(a)] Suppose that $d_\beta (1)= t_1 \cdots t_m$ is finite, i.e., $t_m \neq 0$ and $t_j = 0$ for all $j > m$. 
Then an infinite word is the $\beta$-expansion of a real number in $[0,1)$ if and only if it is the label of a path in the automaton $\Aut=(\{a_0,\ldots,a_{m-1}\}, a_0, A_\beta, \delta, \{a_0,\ldots,a_{m-1}\})$ depicted in Figure~\ref{fig:ParrySimple}.
\item[(b)] Suppose that $d_\beta (1)= t_1 \cdots t_m (t_{m+1}\cdots t_{m+k})^\omega$ where $m,k$ are taken to be minimal. 
Then an infinite word is the $\beta$-expansion of a real number $[0,1)$ if and only if it is the label of a path in the automaton $\Aut=(\{a_0,\ldots,a_{m+k-1}\}, a_0, A_\beta, \delta, \{a_0,\ldots,a_{m+k-1}\})$ depicted in Figure~\ref{fig:ParryNonSimple}.
\end{itemize}
\end{prop}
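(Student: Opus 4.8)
This is Parry's classical admissibility theorem, and the cleanest route is to factor the statement through a purely lexicographic criterion and only then read the automaton off from it. Throughout, write $\sigma$ for the shift map on infinite words and let $\prec$ denote the lexicographic order induced by the order on $A_\beta$. The plan is to prove first the following characterization, which carries the real content: an infinite word $c = c_1 c_2 \cdots$ over $A_\beta$ equals $d_\beta(x)$ for some $x \in [0,1)$ if and only if
$$
\sigma^n(c) \prec d_\beta^*(1) \qquad \text{for every } n \ge 0 .
$$
Here the quasi-greedy expansion $d_\beta^*(1)$ is exactly the lexicographic threshold for admissible tails: because $1 \notin [0,1)$, a genuine expansion must stay strictly below it, while $d_\beta(1)$ itself realizes the borderline value $1$. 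For a non-simple Parry number $d_\beta(1)$ is infinite and $d_\beta^*(1) = d_\beta(1)$, whereas in the simple case $d_\beta^*(1) = (t_1 \cdots t_{m-1}(t_m - 1))^\omega$; in both cases $d_\beta^*(1)$ is eventually periodic and, crucially, shift-maximal, i.e.\ $\sigma^n(d_\beta^*(1)) \preceq d_\beta^*(1)$ for all $n$.

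To establish the criterion I would unwind the greedy condition of Definition~\ref{def:beta-exp}. The inequality $c_j \beta^{-j} + c_{j+1}\beta^{-j-1} + \cdots < \beta^{-j+1}$ says precisely that the tail $0.c_j c_{j+1}\cdots$ is a real number in $[0,1)$, and a comparison of digit sequences at the first position of disagreement shows that a word over $A_\beta$ has $\beta$-value strictly less than $1$ if and only if it is $\prec d_\beta^*(1)$; applying this to every $j$ yields necessity. For sufficiency I would feed $x := 0.c$ to the greedy algorithm and show by induction on $j$ that it returns $c_j$ at each step: the bound $\sigma^{j-1}(c) \prec d_\beta^*(1)$ forces the greedy digit to equal $c_j$ and guarantees that the next tail again has value in $[0,1)$, so that no larger digit was available.

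With the criterion in hand, the automaton $\Aut$ is a direct transcription. I would interpret the state reached after reading a prefix $c_1 \cdots c_n$ as the length of the run of digits at the end of $c_1 \cdots c_n$ that currently follows $d_\beta^*(1)$; eventual periodicity of $d_\beta^*(1)$ caps this length and yields the finite state sets $\{a_0, \ldots, a_{m-1}\}$ and $\{a_0, \ldots, a_{m+k-1}\}$ of the two cases. From a state tracking a run of length $i$, reading the matching digit of $d_\beta^*(1)$ advances the run (wrapping through the period), reading a strictly smaller digit resets the run to $a_0$, and reading a strictly larger digit has no outgoing edge; shift-maximality of $d_\beta^*(1)$ is exactly what makes the reset to the initial state lossless, since after a strict drop every overlapping suffix is already strictly below $d_\beta^*(1)$. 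All states being final, a word labels an infinite path in $\Aut$ precisely when it never triggers the forbidden transition, which an induction on $n$ identifies with the condition $\sigma^n(c) \prec d_\beta^*(1)$ for all $n$; the criterion then gives the equivalence in both (a) and (b).

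The main obstacle is the sufficiency half of the lexicographic criterion: showing that the family of lexicographic bounds compels the greedy algorithm to reproduce the given word $c$ requires a careful tail-by-tail comparison with $d_\beta^*(1)$ and the handling of the borderline situation in which a tail agrees with a long prefix of $d_\beta^*(1)$ --- precisely the point where passing from $d_\beta(1)$ to its quasi-greedy version is indispensable. Once this comparison and the shift-maximality of $d_\beta^*(1)$ are secured, the translation into the transition structure of $\Aut$ is routine bookkeeping on prefix matches.
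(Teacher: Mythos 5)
The paper does not actually prove Proposition~\ref{pro:auto}: it is Parry's classical admissibility theorem, imported with a citation to \cite{Parry} and \cite[Chap.~7]{Lot2}, so there is no in-paper argument to compare yours against. Your outline is the standard textbook proof --- reduce to the lexicographic criterion $\sigma^n(c)\prec d_\beta^*(1)$ for all $n\ge 0$, then check that the automaton of Figures~\ref{fig:ParrySimple} and~\ref{fig:ParryNonSimple} accepts exactly the words all of whose shifts satisfy this bound --- and its architecture is sound, including the correct identification of shift-maximality of $d_\beta^*(1)$ as the reason a digit smaller than the expected $t_{i+1}$ may simply reset the automaton to $a_0$ without losing track of any other pending comparison.

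Two caveats. First, the auxiliary biconditional you invoke for necessity, namely that a word over $A_\beta$ has value $<1$ if and only if it is $\prec d_\beta^*(1)$, is false as stated for arbitrary words: for $\beta=\varphi$ the word $1001^\omega$ is lexicographically smaller than $d_\varphi^*(1)=(10)^\omega$ yet has value exactly $1$. Only the implication (value $<1$) $\Rightarrow$ ($\prec d_\beta^*(1)$) holds unconditionally --- fortunately the one you need --- and even that rests on the unproved fact that $0.\sigma^\ell(d_\beta^*(1))\le 1$ for every $\ell$. Second, the two substantive steps, namely the sufficiency half of the criterion (showing the lexicographic bounds force the greedy algorithm to return $c_j$ at each step) and the verification that tracking only the longest current match with $d_\beta^*(1)$ suffices, are described rather than carried out; you flag the first yourself. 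As a sketch of a cited classical result this is a reasonable reconstruction, but it is not yet a complete proof.
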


\begin{figure}
\vspace{-2cm}
\begin{subfigure}[b]{1\textwidth}
\centering
\begin{tikzpicture}
\tikzstyle{every node}=[shape=circle,fill=none,draw=black,minimum size=20pt,inner sep=2pt]
\node[accepting](a) at (0,0) {$a_0$};
\node[accepting](b) at (2,0) {$a_1$};
\node[accepting](c) at (4,0) {$a_2$};
\node[accepting](d) at (12,0){$a_{m-2}$};
\node[accepting](e) at (14,0) {$a_{m-1}$};

\tikzstyle{every node}=[shape=circle,fill=none,draw=none,minimum size=10pt,inner sep=2pt]
\node(a1) at (0,1) {$0,\ldots,t_1-1$};
\node(a2) at (1,-0.75) {$0,\ldots,t_2-1$};
\node(a3) at (6,0) {$\ldots$};
\node(a4) at (8,0) {$\ldots$};
\node(a5) at (10,0) {$\ldots$};
\node(a6) at (3,0.9) {$0,\ldots,t_3-1$};
\node(a7) at (1,0.2) {$t_1$};
\node(a8) at (3,0.2) {$t_2$};
\node(a8) at (5,0.2) {$t_3$};
\node(a9) at (11,0.2) {$t_{m-2}$};
\node(a10) at (13,0.2) {$t_{m-1}$};
\node(a11) at (11,2) {$0,\ldots,t_m-1$};
\node(a12) at (-1,0) {};

\tikzstyle{every path}=[color =black, line width = 0.5 pt]
\tikzstyle{every node}=[shape=circle,minimum size=5pt,inner sep=2pt]
\draw [->] (a12) to [] node [] {}  (a);

\draw [->] (a) to [loop above] node [] {}  (a);
\draw [->] (a) to [] node [] {}  (b);

\draw [->] (b) to [] node [above] {}  (c);
\draw [->] (b) to [bend left=30] node [above] {}  (a);

\draw [->] (c) to [] node [above] {}  (a3);
\draw [->] (c) to [bend right=30] node [above] {}  (a);

\draw [->] (a5) to [] node [above] {}  (d);
\draw [->] (d) to [] node [above] {}  (e);

\draw [->] (e) to [bend right=30] node [above] {}  (a);
;
\end{tikzpicture}
\caption{The case when $d_\beta(1)$ is finite.}
\label{fig:ParrySimple}
\end{subfigure}

\begin{subfigure}[b]{1\textwidth}
\centering
\begin{tikzpicture}
\tikzstyle{every node}=[shape=circle,fill=none,draw=black,minimum size=20pt,inner sep=2pt]
\node[accepting](a) at (0,0) {$a_0$};
\node[accepting](b) at (2,0) {$a_1$};
\node[accepting](c) at (4,0) {$a_2$};
\node[accepting](d) at (12,0){$a_{m-2}$};
\node[accepting](e) at (14,0) {$a_{m-1}$};
\node[accepting](f) at (14,-2.5) {$a_{m}$};
\node[accepting](g) at (5.5,-2.5) {$a_{m+k-1}$};

\tikzstyle{every node}=[shape=circle,fill=none,draw=none,minimum size=10pt,inner sep=2pt]
\node(a1) at (0,1) {$0,\ldots,t_1-1$};
\node(a2) at (2,-0.6) {$0,\ldots,t_2-1$};
\node(a3) at (6,0) {$\ldots$};
\node(a4) at (8,0) {$\ldots$};
\node(a5) at (10,0) {$\ldots$};
\node(a6) at (3,0.9) {$0,\ldots,t_3-1$};
\node(a7) at (1,0.2) {$t_1$};
\node(a8) at (3,0.2) {$t_2$};
\node(a8) at (5,0.2) {$t_3$};
\node(a9) at (11,0.2) {$t_{m-2}$};
\node(a10) at (13,0.2) {$t_{m-1}$};
\node(a11) at (11,2) {$0,\ldots,t_m-1$};
\node(a12) at (-1,0) {};
\node(a13) at (14.25,-1.25) {$t_{m}$};
\node(a14) at (12,-2.5) {$\ldots$};
\node(a15) at (10,-2.5) {$\ldots$};
\node(a16) at (8,-2.5) {$\ldots$};
\node(a17) at (13,-2.3) {$t_{m+1}$};
\node(a18) at (7,-2.3) {$t_{m+k-1}$};
\node(a19) at (12,-4.1) {$0,\ldots,t_{m+1}-1$};
\node(a20) at (10,-0.9) {$t_{m+k}$};
\node(a21) at (3.7,-1.6) {$0,\ldots,t_{m+k}-1$};

\tikzstyle{every path}=[color =black, line width = 0.5 pt]
\tikzstyle{every node}=[shape=circle,minimum size=5pt,inner sep=2pt]
\draw [->] (a12) to [] node [] {}  (a);

\draw [->] (a) to [loop above] node [] {}  (a);
\draw [->] (a) to [] node [] {}  (b);

\draw [->] (b) to [] node [above] {}  (c);
\draw [->] (b) to [bend left=25] node [above] {}  (a);

\draw [->] (c) to [] node [above] {}  (a3);
\draw [->] (c) to [bend right=30] node [above] {}  (a);

\draw [->] (a5) to [] node [above] {}  (d);
\draw [->] (d) to [] node [above] {}  (e);

\draw [->] (e) to [bend right=30] node [above] {}  (a);
\draw [->] (e) to [] node [above] {}  (f);

\draw [->] (f) to [] node [above] {}  (a14);
\draw [->] (f) to [bend left=40] node [above] {}  (a);

\draw [->] (a16) to [] node [above] {}  (g);
\draw [->] (g) to [bend left=30] node [above] {}  (f);
\draw [->] (g) to [bend left=15] node [above] {}  (a);
;
\end{tikzpicture}
\caption{The case when $d_\beta(1)$ is ultimately periodic but not finite.}
\label{fig:ParryNonSimple}
\end{subfigure}
\caption{The automaton $\Aut$ in function of the ultimately periodic word $d_\beta(1)$.}
\end{figure}
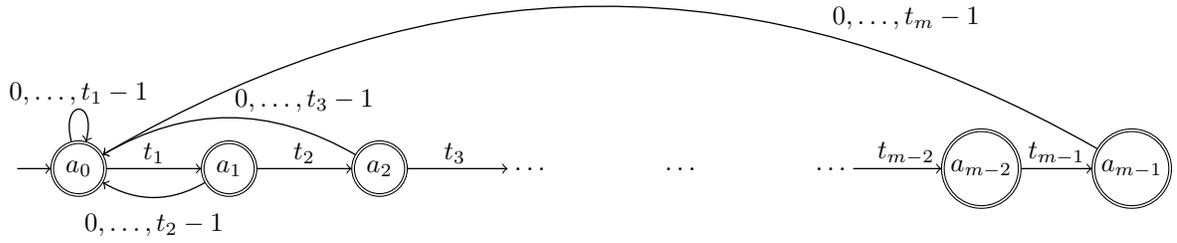
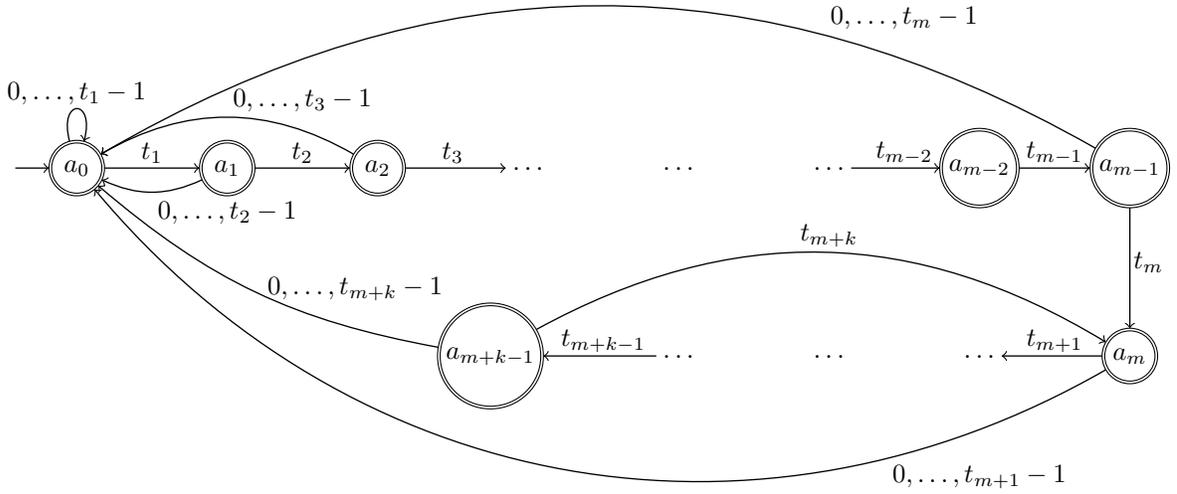

Let us illustrate the previous proposition. For other examples, see, for instance,~\cite{CRRW}.

\begin{example}\label{ex:integer}
If $\beta \in \mathbb{R}_{>1}$ is an integer, then $d_\beta(1)=d^*_\beta(1)=(\beta  -1)^\omega$. The automaton $\Aut$ consists of a single initial and final state $a_0$ with a loop of labels $0, 1, \ldots, \beta-1$.
\end{example}

\begin{example}\label{ex:phi-et-phi-sqr}
Consider the golden ratio $\varphi$. Since $1 = 1/\varphi + 1/\varphi^2$, we have $d_\varphi(1)=11$ and $d^*_\varphi(1)=(10)^\omega$. 
It is thus a Parry number.
The automaton $\mathcal{A}_\varphi$ is depicted in Figure~\ref{fig:Aut-phi}. 

The square $\varphi^2$ of the golden ratio is again a Parry number with $d_{\varphi^2}(1)=d^*_{\varphi^2}(1)=21^\omega$.
The automaton $\mathcal{A}_{\varphi^2}$ is depicted in Figure~\ref{fig:Aut-phi-sqr}. 

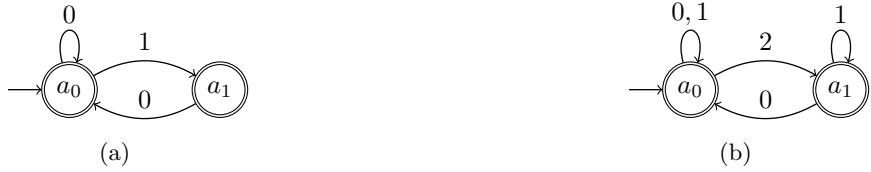
\begin{figure}
\vspace{-2cm}
\begin{subfigure}[b]{0.5\textwidth}
\centering
\begin{tikzpicture}
\tikzstyle{every node}=[shape=circle,fill=none,draw=black,minimum size=20pt,inner sep=2pt]
\node[accepting](a) at (0,0) {$a_0$};
\node[accepting](b) at (2,0) {$a_1$};

\tikzstyle{every node}=[shape=circle,fill=none,draw=none,minimum size=10pt,inner sep=2pt]
\node(a0) at (-1,0) {};
\node(a1) at (0,1) {$0$};

\tikzstyle{every path}=[color =black, line width = 0.5 pt]
\tikzstyle{every node}=[shape=circle,minimum size=5pt,inner sep=2pt]
\draw [->] (a0) to [] node [] {}  (a);
\draw [->] (a) to [loop above] node [] {}  (a);

\draw [->] (a) to [bend left] node [above] {$1$}  (b);
\draw [->] (b) to [bend left] node [above] {$0$}  (a);
;ParrySimple
\end{tikzpicture}
\caption{}
\label{fig:Aut-phi}
\end{subfigure}%
\begin{subfigure}[b]{0.5\textwidth}
\centering
\begin{tikzpicture}
\tikzstyle{every node}=[shape=circle,fill=none,draw=black,minimum size=20pt,inner sep=2pt]
\node[accepting](a) at (0,0) {$a_0$};
\node[accepting](b) at (2,0) {$a_1$};

\tikzstyle{every node}=[shape=circle,fill=none,draw=none,minimum size=10pt,inner sep=2pt]
\node(a0) at (-1,0) {};
\node(a1) at (0,1) {$0,1$};
\node(a2) at (2,1) {$1$};

\tikzstyle{every path}=[color =black, line width = 0.5 pt]
\tikzstyle{every node}=[shape=circle,minimum size=5pt,inner sep=2pt]
\draw [->] (a0) to [] node [] {}  (a);
\draw [->] (a) to [loop above] node [] {}  (a);
\draw [->] (b) to [loop above] node [] {}  (b);

\draw [->] (a) to [bend left] node [above] {$2$}  (b);
\draw [->] (b) to [bend left] node [above] {$0$}  (a);
;
\end{tikzpicture}
\caption{}
\label{fig:Aut-phi-sqr}
\end{subfigure}
\caption{The automaton $\mathcal{A}_{\varphi}$ (on the left) and the automaton $\mathcal{A}_{\varphi^2}$ (on the right).}
\end{figure}
\end{example}

With every Parry number is canonically associated a linear numeration system. 
Let us recall the definition of such numeration systems. 

\begin{definition}\label{def:lin-num-syst}
Let $U = (U(n))_{n\ge 0}$ be a sequence of integers such that $U$ is increasing, $U(0)=1$ and $\sup_{n \geq 0} \frac{U(n+1)}{U(n)}$ is bounded by a constant. 
We say that $U$ is a \emph{linear numeration system} if $U$ satisfies a linear recurrence relation, i.e., there exist $k\ge 1$ and $a_0, \ldots, a_{k-1} \in \mathbb{Z}$ such that
\begin{align}
\forall n \ge 0, \quad U(n+k) = a_{k-1}\, U(n+k-1) + \cdots + a_0\, U(n)
 \label{eq:eq-rec-U}. 
\end{align}
Let $n$ be a positive integer. 
By successive Euclidean divisions, there exists $\ell\ge 1$ such that 
$$ n = \sum_{j=0}^{\ell -1} c_j\, U(j)
$$
where the $c_j$'s are non-negative integers and $c_{\ell-1}$ is non-zero. 
The word $c_{\ell -1}\cdots c_0$ is called the \emph{normal $U$-representation} of $n$ and is denoted by $\rep_U(n)$. 
In other words, the word $c_{\ell-1}\cdots c_0$ is the greedy expansion of $n$ in the considered numeration system. 
We set $\rep_U(0):=\varepsilon$. 
Finally, we refer to $L_U:=\rep_U(\mathbb{N})$ as the \emph{language of the numeration} and we let $A_U$ denote the minimal alphabet such that $L_U \subset A_U^*$.  
If $d_{r}\cdots d_0$ is a word over an alphabet of digits, then its \emph{$U$-numerical value} is
$$\val_U(d_{r}\cdots d_0):=\sum_{j=0}^{r} d_j\, U(j).$$
Observe that, if $\val_U (d_{r}\cdots d_0) = n$, then the word $d_r \cdots d_0$ is a
$U$-representation of $n$ (but not necessarily its normal $U$-representation).
\end{definition}

\begin{definition}\label{def:Parry-Bertrand-num-syst}
Let $\beta \in \mathbb{R}_{>1}$ be a Parry number. We define a particular linear numeration system $U_\beta:=(U_\beta(n))_{n\ge 0}$ associated with $\beta$ as follows. 

If $d_\beta (1)= t_1 \cdots t_m$ is finite ($t_m \neq 0$), then we set $U_\beta(0):=1$, $U_\beta(i):= t_1 U_\beta(i-1) + \cdots + t_i U_\beta(0) + 1$ for all $i\in \{1,\ldots,m-1\}$ and, for all $n\ge m$,
\begin{align*}
U_\beta(n)&:= t_1 U_\beta(n-1) + \cdots + t_m U_\beta(n-m).
\end{align*}

If $d_\beta (1)= t_1 \cdots t_m (t_{m+1}\cdots t_{m+k})^\omega$ ($m,k$ are minimal), then we set
$U_\beta(0):=1$, $U_\beta(i):= t_1 U_\beta(i-1) + \cdots + t_i U_\beta(0) + 1$ for all $i\in \{1,\ldots,m+k-1\}$ and, for all $n\ge m+k$,
\begin{align*}
U_\beta(n):=& t_1 U_\beta(n-1) + \cdots + t_{m+k} U_\beta(n-m-k) + U_\beta(n-k) \\ 
&- t_1 U_\beta(n-k-1) - \cdots - t_m U_\beta(n-m-k). 
\end{align*}
\end{definition}

The linear numeration system $U_\beta$ from Definition~\ref{def:Parry-Bertrand-num-syst} has an interesting property: it is a Bertrand numeration system. 

\begin{definition}\label{def:bertrand}
A linear numeration system $U = (U(n))_{n\ge 0}$ is a \emph{Bertrand numeration system} if, for all $w \in A^+_U$, $w \in L_U \Leftrightarrow w0 \in L_U $.
\end{definition}

Bertrand proved that the linear numeration system $U_\beta$ associated with the Parry number $\beta$ from Definition~\ref{def:Parry-Bertrand-num-syst} is the unique linear numeration system associated with $\beta$ that is also a Bertrand numeration system~\cite{Bertrand}. 
In that case~\cite{Bertrand}, any word $w$ in the set $0^*\Lang$ of all normal $U_\beta$-representations with leading zeroes is the label of a path in the automaton $\Aut$ from Proposition~\ref{pro:auto}. 

Finally, every Parry number is a Perron number~\cite[Chap.~7]{Lot2}. 
A real number $\beta >1$ is a \textit{Perron number} if it is an algebraic integer whose conjugates have modulus less than $\beta$. 
Numeration systems based on Perron numbers are defined as follows and have the property~\eqref{eq:conv-beta-U}, which is often used in this paper. 

\begin{definition}\label{def:Perron-num-syst}
Let $U = (U(n))_{n\ge 0}$ be a linear numeration
system. 
Consider the characteristic polynomial of the recurrence~\eqref{eq:eq-rec-U} given by $P(X) = X^k - a_{k-1} X^{k-1} - \cdots - a_1 X - a_0$.
If $P$ is the minimal polynomial of a Perron number $\beta \in \mathbb{R}_{>1}$, we say that $U$ is a \emph{Perron numeration system}. 
In this case, the polynomial $P$ can be factored as 
$$
P(X) = (X-\beta) (X-\alpha_2) \cdots (X-\alpha_k) 
$$
where the complex numbers $\alpha_2,\ldots ,\alpha_k$ are the conjugates of $\beta$, and, for all $j > 1$, we have $|\alpha_j | < \beta$. 
Using a well-known fact regarding recurrence relations, we have
$$
U(n) = c_1 \beta^n + c_2 \alpha_2^n + \cdots + c_k \alpha^n_k \quad \forall n \ge 0 
$$
where $c_1, \ldots, c_k$ are complex numbers depending on the initial values of $U$.
Since $|\alpha_j | < \beta$ for all $j > 1$, we have  
\begin{align}
\lim\limits_{n \rightarrow +\infty} \frac{U(n)}{\beta^n} = c_1 \label{eq:conv-beta-U}. 
\end{align}
\end{definition}

\begin{remark}
Note that if two Perron numeration systems are associated with the same Perron number, then these two systems only differ by the choice of the initial values $U(0),\ldots, U(k-1)$. 
The choice of those initial values is of great importance.  See, for instance, Example~\ref{ex:phi}.
\end{remark}

\begin{example}
The usual integer base system is a special
case of a Perron--Bertrand numeration system. 
\end{example}

\begin{example}\label{ex:phi}
The golden ratio $\varphi$ is a Perron number whose minimal polynomial is $P(X)=X^2-X-1$. 
A Perron--Bertrand numeration system associated with $\varphi$ is the Fibonacci numeration system based on the Fibonacci numbers $(F(n))_{n\ge 0}$ defined by $F(0)=1$, $F(1)=2$ and $F(n+2)=F(n+1)+F(n)$. 
If we change the initial conditions and set $F'(0)=1$, $F'(1)=3$ and $F'(n+2)=F'(n+1)+F'(n)$, we again get a Perron numeration associated with $\varphi$ which is not a Bertrand numeration system. 
Indeed, $2$ is a greedy representation, but not
$20$ because $\rep_{F'} (\val_{F'} (20)) = 102$.
\end{example}

The particular setting of this paper is the following one: we let $\beta \in \mathbb{R}_{>1}$ be a Parry number and we constantly use the special Parry--Bertrand numeration $U_\beta$ from Definition~\ref{def:Parry-Bertrand-num-syst}. From Definition~\ref{def:beta-exp} and Definition~\ref{def:lin-num-syst}, the alphabet $\Alp$ is the set $\{0, 1, \ldots, \lceil \beta \rceil -1 \}$ and the language of the system of numeration $U_\beta$ is $\Lang \subset A_{U_\beta}^*$ (which is defined using the automaton $\Aut$ from Proposition~\ref{pro:auto}).
To end this section, we prove a useful lemma about binomial coefficients of words ending with blocks of zeroes. 

\begin{lemma}\label{lem:bin-zeroes}
For all non-empty words $u,v\in \Lang$ and all $k\in\mathbb{N}$, we have
$$
\binom{u0^k}{v0^k} = \sum_{j=0}^k \binom{k}{j} \binom{u}{v0^j}.
$$ 
\end{lemma}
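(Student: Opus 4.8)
The plan is to reduce everything to Lemma~\ref{lem:lothaire-bin}, whose special case $a=b=0$ lets me strip a single trailing zero off both arguments simultaneously. Introduce the two-parameter quantity $g(k,\ell):=\binom{u0^k}{v0^\ell}$ for $k,\ell\ge 0$. For $k\ge 1$ and $\ell\ge 1$ I would write $u0^k=(u0^{k-1})0$ and $v0^\ell=(v0^{\ell-1})0$ and apply Lemma~\ref{lem:lothaire-bin} with both final letters equal to $0$, obtaining the Pascal-type recurrence
$$
g(k,\ell)=\binom{u0^{k-1}}{v0^\ell}+\binom{u0^{k-1}}{v0^{\ell-1}}=g(k-1,\ell)+g(k-1,\ell-1).
$$
The desired identity is exactly $g(k,k)=\sum_{j=0}^k\binom{k}{j}\binom{u}{v0^j}$.

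I expect the main obstacle to be that one cannot induct directly on the one-variable statement: peeling a zero from $\binom{u0^k}{v0^k}$ produces the off-diagonal term $\binom{u0^k}{v0^{k+1}}$, which is not of the target shape, so the induction does not close. The fix is to prove the two-parameter generalization, namely that for all $k\ge 0$ and all $p\ge 0$,
$$
g(k,k+p)=\sum_{j=0}^{k}\binom{k}{j}\binom{u}{v0^{\,j+p}},
$$
and then specialize $p=0$. I would argue by induction on $k$, keeping $p$ free. The base case $k=0$ is immediate since $g(0,p)=\binom{u}{v0^p}$. For the inductive step, the recurrence gives $g(k,k+p)=g(k-1,(k-1)+(p+1))+g(k-1,(k-1)+p)$, to which the induction hypothesis applies with parameters $p+1\ge 0$ and $p\ge 0$ respectively; after shifting the summation index in the first sum, Pascal's rule $\binom{k-1}{j-1}+\binom{k-1}{j}=\binom{k}{j}$ recombines the two sums into the claimed one. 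Carrying $p\ge 0$ through the induction is precisely what sidesteps the boundary issue: the recurrence is legitimate only while the second argument is at least $1$, and keeping $p$ nonnegative guarantees the second index always dominates the first, so one is never forced to peel a zero off a word that has none.

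As an independent check (and a cleaner route to the same formula), I would also sketch a bijective argument. An occurrence of $v0^k$ inside $u0^k$ is determined by the number $t\in\{0,\dots,k\}$ of the final $k$ zeros of $u0^k$ that it uses. Because the trailing block of $u0^k$ occupies the largest positions, the letters of the pattern sent there form a suffix of length $t\le k$, hence lie entirely within the trailing $0^k$ of the pattern, so no letter of $v$ is affected. Such an occurrence then splits as a choice of the $t$ used positions among the $k$ trailing zeros ($\binom{k}{t}$ possibilities) together with an occurrence of the remaining prefix $v0^{k-t}$ inside $u$ ($\binom{u}{v0^{k-t}}$ possibilities). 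Summing over $t$ and setting $j=k-t$ (using $\binom{k}{k-j}=\binom{k}{j}$) yields $\binom{u0^k}{v0^k}=\sum_{j=0}^k\binom{k}{j}\binom{u}{v0^j}$, confirming the lemma.
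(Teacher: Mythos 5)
Your proof is correct. The inductive argument is essentially the paper's own proof in different notation: the paper also inducts on $k$, peels one trailing zero with Lemma~\ref{lem:lothaire-bin}, applies the induction hypothesis twice, and recombines with Pascal's rule. Where you carry the free parameter $p$ in $g(k,k+p)$, the paper instead applies its hypothesis to the pair $(u,v0)$, absorbing the extra zero into $v$; since its hypothesis is quantified over words of $\Lang$, it must invoke the Bertrand property to guarantee $v0\in\Lang$. Your formulation sidesteps that entirely --- binomial coefficients of words are defined for arbitrary words, so no appeal to the language structure is needed --- and your boundary analysis (keeping the second index at least the first so one never peels a zero off a word without one) is sound. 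Your second, bijective argument is a genuinely different route: classifying occurrences of $v0^k$ in $u0^k$ by the number $t$ of trailing zeros of $u0^k$ they use, and observing that those $t$ positions, being the largest chosen ones, must correspond to trailing zeros of the pattern, yields the identity directly with no induction and no recourse to Lemma~\ref{lem:lothaire-bin}. Either argument alone would be a complete proof; the combinatorial one is arguably the more illuminating.
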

\begin{proof}
We proceed by induction on $k\in\mathbb{N}$. 
If $k=0$, the result is obvious. 
Suppose that the result holds true for all non-empty words $u,v \in \Lang$ and for $0,\ldots,k$. 
We show that it still holds true for all non-empty words $u,v \in \Lang$ and $k+1$. 
Using Lemma~\ref{lem:lothaire-bin}, we first have 
$$
\binom{u0^{k+1}}{v0^{k+1}} = \binom{u0^k}{v'0^k} + \binom{u0^k}{v0^k}
$$
where $v'=v0 \in \Lang$ since $U_\beta$ is a Parry--Bertrand numeration system. 
By induction hypothesis, we get
\begin{eqnarray*}
\binom{u0^{k+1}}{v0^{k+1}} 
& =& \sum_{j=1}^{k+1} \binom{k}{j-1} \binom{u}{v0^j} +  \sum_{j=0}^k \binom{k}{j} \binom{u}{v0^j} \\
& =& \binom{k+1}{k+1} \binom{u}{v0^{k+1}} + \sum_{j=1}^k \left( \binom{k}{j-1} + \binom{k}{j} \right) \binom{u}{v0^j} + \binom{k+1}{0} \binom{u}{v} \\
&=& \sum_{j=0}^{k+1} \binom{k+1}{j} \binom{u}{v0^j}.
\end{eqnarray*}
\end{proof}


\section{The $(\star)$ condition}\label{sec:star}
We let $w_n=\rep_{U_\beta}(n)$ denote the $n$th word of the language $\Lang$ in the genealogical order. 
The generalized Pascal triangle $\mathrm{P}_{U_\beta} : \mathbb{N} \times \mathbb{N} \to \mathbb{N} : (i,j) \mapsto \binom{w_i}{w_j}$ is represented as an infinite table\footnote{Using the notation $\binom{u}{v}$, the rows (resp., columns) of $\mathrm{P}_{U_\beta}$ are indexed by the words $u$ (resp., $v$).} whose entry on the $i$th row and the $j$th column is the binomial coefficient  
$\binom{w_i}{w_j}$. 
For instance, when $\beta=\varphi$, the first few values in the generalized Pascal triangle $\mathrm{P}_{U_\varphi}$ are given in Table~\ref{tab:tpFib} below.
Considering the intersection of the lattice $\mathbb{N}^2$ with $[0,U_\beta(n)]\times [0,U_\beta(n)]$, the first $U_\beta(n)$ rows and columns of the generalized Pascal triangle $\mathrm{P}_{U_\beta}$ modulo~$2$ 
$$\left(\binom{w_i}{w_j}\bmod{2}\right)_{0\le i,j< U_\beta(n)}$$ 
provide a coloring of this lattice, leading to a sequence of compact subsets of $\mathbb{R}^2$. 
If we normalize these sets respectively by a homothety of ratio $1/U_\beta(n)$, we define a sequence $(\U_n)_{n\ge 0}$ of subsets of $[0,1]\times [0,1]$.

\begin{definition}\label{def:Un}
Let $Q:=[0,1]\times[0,1]$. Consider the sequence $(\U_n)_{n\ge 0}$ of sets in $[0,1]\times [0,1]$ defined for all $n\ge 0$ by 

$$
\U_n:= \frac{1}{U_\beta(n)}  \bigcup \left\{(\val_{U_\beta}(v),\val_{U_\beta}(u))+Q\mid u,v\in \Lang, \binom{u}{v}\equiv 1\bmod{2}\right\}\subset [0,1]\times [0,1].
$$
Each $\U_n$ is a finite union of squares of size $1/U_{\beta}(n)$ and is thus compact.
\end{definition}

\begin{example}
When $\beta=\varphi$ is the golden ratio, the first values in the generalized Pascal triangle $\mathrm{P}_{U_\varphi}$ are given in Table~\ref{tab:tpFib}.
\begin{table}[h!t]
$$\begin{array}{rr|cccccccc}
&&&&&&j&&\\
&\binom{w_i}{w_j}&\varepsilon&1&10&100&101&1000&1001&1010\\
\hline
 &\varepsilon&1 & 0 & 0 & 0 & 0 & 0 & 0 & 0 \\
 &1&1 & 1 & 0 & 0 & 0 & 0 & 0 & 0 \\
 &10&1 & 1 & 1 & 0 & 0 & 0 & 0 & 0 \\
 i&100&1 & 1 & 2 & 1 & 0 & 0 & 0 & 0 \\
 &101&1 & 2 & 1 & 0 & 1 & 0 & 0 & 0 \\
 &1000&1 & 1 & 3 & 3 & 0 & 1 & 0 & 0 \\
 &1001&1 & 2 & 2 & 1 & 2 & 0 & 1 & 0 \\
 &1010&1 & 2 & 3 & 1 & 1 & 0 & 0 & 1 \\
\end{array}$$
\caption{The first few values in the generalized Pascal triangle $\mathrm{P}_{U_\varphi}$.}
    \label{tab:tpFib}
\end{table}
The sets $\mathcal{U}^\varphi_3$, $\mathcal{U}^\varphi_4$ and $\mathcal{U}^\varphi_5$ are depicted in Figure~\ref{fig:exemples-U-1}. 
The set $\mathcal{U}^\varphi_9$ is depicted in Figure~\ref{fig:exemples-U-2} given in the appendix.

\begin{center}
\begin{figure}
\begin{subfigure}[b]{1\textwidth}
\begin{center}
\begin{tikzpicture}
\draw[fill=black] (0,1) -- (1,1) -- (1,2) -- (0,2) -- cycle;
\draw[fill=white] (1,1) -- (2,1) -- (2,2) -- (1,2) -- cycle;
\draw[fill=white] (2,1) -- (3,1) -- (3,2) -- (2,2) -- cycle;
\draw[fill=white] (3,1) -- (4,1) -- (4,2) -- (3,2) -- cycle;
\draw[fill=white] (4,1) -- (5,1) -- (5,2) -- (4,2) -- cycle;

\draw[fill=black] (0,0) -- (1,0) -- (1,1) -- (0,1) -- cycle;
\draw[fill=black] (1,0) -- (2,0) -- (2,1) -- (1,1) -- cycle;
\draw[fill=white] (2,0) -- (3,0) -- (3,1) -- (2,1) -- cycle;
\draw[fill=white] (3,0) -- (4,0) -- (4,1) -- (3,1) -- cycle;
\draw[fill=white] (4,0) -- (5,0) -- (5,1) -- (4,1) -- cycle;

\draw[fill=black] (0,-1) -- (1,-1) -- (1,0) -- (0,0) -- cycle;
\draw[fill=black] (1,-1) -- (2,-1) -- (2,0) -- (1,0) -- cycle;
\draw[fill=black] (2,-1) -- (3,-1) -- (3,0) -- (2,0) -- cycle;
\draw[fill=white] (3,-1) -- (4,-1) -- (4,0) -- (3,0) -- cycle;
\draw[fill=white] (4,-1) -- (5,-1) -- (5,0) -- (4,0) -- cycle;

\draw[fill=black] (0,-2) -- (1,-2) -- (1,-1) -- (0,-1) -- cycle;
\draw[fill=black] (1,-2) -- (2,-2) -- (2,-1) -- (1,-1) -- cycle;
\draw[fill=white] (2,-2) -- (3,-2) -- (3,-1) -- (2,-1) -- cycle;
\draw[fill=black] (3,-2) -- (4,-2) -- (4,-1) -- (3,-1) -- cycle;
\draw[fill=white] (4,-2) -- (5,-2) -- (5,-1) -- (4,-1) -- cycle;

\draw[fill=black] (0,-3) -- (1,-3) -- (1,-2) -- (0,-2) -- cycle;
\draw[fill=white] (1,-3) -- (2,-3) -- (2,-2) -- (1,-2) -- cycle;
\draw[fill=black] (2,-3) -- (3,-3) -- (3,-2) -- (2,-2) -- cycle;
\draw[fill=white] (3,-3) -- (4,-3) -- (4,-2) -- (3,-2) -- cycle;
\draw[fill=black] (4,-3) -- (5,-3) -- (5,-2) -- (4,-2) -- cycle;

\draw (-0.8,1.7) node[anchor=north west] {$\varepsilon$};
\draw (-0.8,0.7) node[anchor=north west] {$1$};
\draw (-0.8,-0.3) node[anchor=north west] {$10$};
\draw (-0.8,-1.3) node[anchor=north west] {$100$};
\draw (-0.8,-2.3) node[anchor=north west] {$101$};

\draw (0.3,2.5) node[anchor=north west] {$\varepsilon$};
\draw (1.3,2.5) node[anchor=north west] {$1$};
\draw (2.2,2.5) node[anchor=north west] {$10$};
\draw (3.1,2.5) node[anchor=north west] {$100$};
\draw (4.1,2.5) node[anchor=north west] {$101$};
\end{tikzpicture}
\end{center}
\caption{The set $\mathcal{U}^\varphi_3$.}
\end{subfigure}

\begin{subfigure}[b]{1\textwidth}
\begin{center}
\begin{tikzpicture}
\draw[fill=black] (0,0.625) -- (0.625,0.625) -- (0.625,1.25) -- (0,1.25) -- cycle;
\draw[fill=white] (0.625,0.625) -- (1.25,0.625) -- (1.25,1.25) -- (0.625,1.25) -- cycle;
\draw[fill=white] (1.25,0.625) -- (1.875,0.625) -- (1.875,1.25) -- (1.25,1.25) -- cycle;
\draw[fill=white] (1.875,0.625) -- (2.5,0.625) -- (2.5,1.25) -- (1.875,1.25) -- cycle;
\draw[fill=white] (2.5,0.625) -- (3.125,0.625) -- (3.125,1.25) -- (2.5,1.25) -- cycle;
\draw[fill=white] (3.125,0.625) -- (3.75,0.625) -- (3.75,1.25) -- (3.125,1.25) -- cycle;
\draw[fill=white] (3.75,0.625) -- (4.375,0.625) -- (4.375,1.25) -- (3.75,1.25) -- cycle;
\draw[fill=white] (4.375,0.625) -- (5,0.625) -- (5,1.25) -- (4.375,1.25) -- cycle;

\draw[fill=black] (0,0) -- (0.625,0) -- (0.625,0.625) -- (0,0.625) -- cycle;
\draw[fill=black] (0.625,0) -- (1.25,0) -- (1.25,0.625) -- (0.625,0.625) -- cycle;
\draw[fill=white] (1.25,0) -- (1.875,0) -- (1.875,0.625) -- (1.25,0.625) -- cycle;
\draw[fill=white] (1.875,0) -- (2.5,0) -- (2.5,0.625) -- (1.875,0.625) -- cycle;
\draw[fill=white] (2.5,0) -- (3.125,0) -- (3.125,0.625) -- (2.5,0.625) -- cycle;
\draw[fill=white] (3.125,0) -- (3.75,0) -- (3.75,0.625) -- (3.125,0.625) -- cycle;
\draw[fill=white] (3.75,0) -- (4.375,0) -- (4.375,0.625) -- (3.75,0.625) -- cycle;
\draw[fill=white] (4.375,0) -- (5,0) -- (5,0.625) -- (4.375,0.625) -- cycle;

\draw[fill=black] (0,-0.625) -- (0.625,-0.625) -- (0.625,0) -- (0,0) -- cycle;
\draw[fill=black] (0.625,-0.625) -- (1.25,-0.625) -- (1.25,0) -- (0.625,0) -- cycle;
\draw[fill=black] (1.25,-0.625) -- (1.875,-0.625) -- (1.875,0) -- (1.25,0) -- cycle;
\draw[fill=white] (1.875,-0.625) -- (2.5,-0.625) -- (2.5,0) -- (1.875,0) -- cycle;
\draw[fill=white] (2.5,-0.625) -- (3.125,-0.625) -- (3.125,0) -- (2.5,0) -- cycle;
\draw[fill=white] (3.125,-0.625) -- (3.75,-0.625) -- (3.75,0) -- (3.125,0) -- cycle;
\draw[fill=white] (3.75,-0.625) -- (4.375,-0.625) -- (4.375,0) -- (3.75,0) -- cycle;
\draw[fill=white] (4.375,-0.625) -- (5,-0.625) -- (5,0) -- (4.375,0) -- cycle;

\draw[fill=black] (0,-1.25) -- (0.625,-1.25) -- (0.625,-0.625) -- (0,-0.625) -- cycle;
\draw[fill=black] (0.625,-1.25) -- (1.25,-1.25) -- (1.25,-0.625) -- (0.625,-0.625) -- cycle;
\draw[fill=white] (1.25,-1.25) -- (1.875,-1.25) -- (1.875,-0.625) -- (1.25,-0.625) -- cycle;
\draw[fill=black] (1.875,-1.25) -- (2.5,-1.25) -- (2.5,-0.625) -- (1.875,-0.625) -- cycle;
\draw[fill=white] (2.5,-1.25) -- (3.125,-1.25) -- (3.125,-0.625) -- (2.5,-0.625) -- cycle;
\draw[fill=white] (3.125,-1.25) -- (3.75,-1.25) -- (3.75,-0.625) -- (3.125,-0.625) -- cycle;
\draw[fill=white] (3.75,-1.25) -- (4.375,-1.25) -- (4.375,-0.625) -- (3.75,-0.625) -- cycle;
\draw[fill=white] (4.375,-1.25) -- (5,-1.25) -- (5,-0.625) -- (4.375,-0.625) -- cycle;

\draw[fill=black] (0,-1.875) -- (0.625,-1.875) -- (0.625,-1.25) -- (0,-1.25) -- cycle;
\draw[fill=white] (0.625,-1.875) -- (1.25,-1.875) -- (1.25,-1.25) -- (0.625,-1.25) -- cycle;
\draw[fill=black] (1.25,-1.875) -- (1.875,-1.875) -- (1.875,-1.25) -- (1.25,-1.25) -- cycle;
\draw[fill=white] (1.875,-1.875) -- (2.5,-1.875) -- (2.5,-1.25) -- (1.875,-1.25) -- cycle;
\draw[fill=black] (2.5,-1.875) -- (3.125,-1.875) -- (3.125,-1.25) -- (2.5,-1.25) -- cycle;
\draw[fill=white] (3.125,-1.875) -- (3.75,-1.875) -- (3.75,-1.25) -- (3.125,-1.25) -- cycle;
\draw[fill=white] (3.75,-1.875) -- (4.375,-1.875) -- (4.375,-1.25) -- (3.75,-1.25) -- cycle;
\draw[fill=white] (4.375,-1.875) -- (5,-1.875) -- (5,-1.25) -- (4.375,-1.25) -- cycle;

\draw[fill=black] (0,-2.5) -- (0.625,-2.5) -- (0.625,-1.875) -- (0,-1.875) -- cycle;
\draw[fill=black] (0.625,-2.5) -- (1.25,-2.5) -- (1.25,-1.875) -- (0.625,-1.875) -- cycle;
\draw[fill=black] (1.25,-2.5) -- (1.875,-2.5) -- (1.875,-1.875) -- (1.25,-1.875) -- cycle;
\draw[fill=black] (1.875,-2.5) -- (2.5,-2.5) -- (2.5,-1.875) -- (1.875,-1.875) -- cycle;
\draw[fill=white] (2.5,-2.5) -- (3.125,-2.5) -- (3.125,-1.875) -- (2.5,-1.875) -- cycle;
\draw[fill=black] (3.125,-2.5) -- (3.75,-2.5) -- (3.75,-1.875) -- (3.125,-1.875) -- cycle;
\draw[fill=white] (3.75,-2.5) -- (4.375,-2.5) -- (4.375,-1.875) -- (3.75,-1.875) -- cycle;
\draw[fill=white] (4.375,-2.5) -- (5,-2.5) -- (5,-1.875) -- (4.375,-1.875) -- cycle;

\draw[fill=black] (0,-3.125) -- (0.625,-3.125) -- (0.625,-2.5) -- (0,-2.5) -- cycle;
\draw[fill=white] (0.625,-3.125) -- (1.25,-3.125) -- (1.25,-2.5) -- (0.625,-2.5) -- cycle;
\draw[fill=white] (1.25,-3.125) -- (1.875,-3.125) -- (1.875,-2.5) -- (1.25,-2.5) -- cycle;
\draw[fill=black] (1.875,-3.125) -- (2.5,-3.125) -- (2.5,-2.5) -- (1.875,-2.5) -- cycle;
\draw[fill=white] (2.5,-3.125) -- (3.125,-3.125) -- (3.125,-2.5) -- (2.5,-2.5) -- cycle;
\draw[fill=white] (3.125,-3.125) -- (3.75,-3.125) -- (3.75,-2.5) -- (3.125,-2.5) -- cycle;
\draw[fill=black] (3.75,-3.125) -- (4.375,-3.125) -- (4.375,-2.5) -- (3.75,-2.5) -- cycle;
\draw[fill=white] (4.375,-3.125) -- (5,-3.125) -- (5,-2.5) -- (4.375,-2.5) -- cycle;

\draw[fill=black] (0,-3.75) -- (0.625,-3.75) -- (0.625,-3.125) -- (0,-3.125) -- cycle;
\draw[fill=white] (0.625,-3.75) -- (1.25,-3.75) -- (1.25,-3.125) -- (0.625,-3.125) -- cycle;
\draw[fill=black] (1.25,-3.75) -- (1.875,-3.75) -- (1.875,-3.125) -- (1.25,-3.125) -- cycle;
\draw[fill=black] (1.875,-3.75) -- (2.5,-3.75) -- (2.5,-3.125) -- (1.875,-3.125) -- cycle;
\draw[fill=black] (2.5,-3.75) -- (3.125,-3.75) -- (3.125,-3.125) -- (2.5,-3.125) -- cycle;
\draw[fill=white] (3.125,-3.75) -- (3.75,-3.75) -- (3.75,-3.125) -- (3.125,-3.125) -- cycle;
\draw[fill=white] (3.75,-3.75) -- (4.375,-3.75) -- (4.375,-3.125) -- (3.75,-3.125) -- cycle;
\draw[fill=black] (4.375,-3.75) -- (5,-3.75) -- (5,-3.125) -- (4.375,-3.125) -- cycle;

\draw (-1,1.1) node[anchor=north west] {$\varepsilon$};
\draw (-1,0.5) node[anchor=north west] {$1$};
\draw (-1,-0.1) node[anchor=north west] {$10$};
\draw (-1,-0.7) node[anchor=north west] {$100$};
\draw (-1,-1.3) node[anchor=north west] {$101$};
\draw (-1,-1.9) node[anchor=north west] {$1000$};
\draw (-1,-2.6) node[anchor=north west] {$1001$};
\draw (-1,-3.2) node[anchor=north west] {$1010$};

\draw (0,1.5) node[anchor=north west, rotate=40] {$\varepsilon$};
\draw (0.6,1.5) node[anchor=north west, rotate=40] {$1$};
\draw (1.2,1.5) node[anchor=north west, rotate=40] {$10$};
\draw (1.8,1.5) node[anchor=north west, rotate=40] {$100$};
\draw (2.4,1.5) node[anchor=north west, rotate=40] {$101$};
\draw (3,1.5) node[anchor=north west, rotate=40] {$1000$};
\draw (3.6,1.5) node[anchor=north west, rotate=40] {$1001$};
\draw (4.2,1.5) node[anchor=north west, rotate=40] {$1010$};

\end{tikzpicture}
\end{center}
\caption{The set $\mathcal{U}^\varphi_4$.}
\end{subfigure}

\begin{subfigure}[b]{1\textwidth}
\begin{center}
\begin{tikzpicture}
\draw[fill=black] (0,0.3846) -- (0.3846,0.3846) -- (0.3846,0.7692) -- (0,0.7692) -- cycle;
\draw[fill=white] (0.3846,0.3846) -- (0.7692,0.3846) -- (0.7692,0.7692) -- (0.3846,0.7692) -- cycle;
\draw[fill=white] (0.7692,0.3846) -- (1.1538,0.3846) -- (1.1538,0.7692) -- (0.7692,0.7692) -- cycle;
\draw[fill=white] (1.1538,0.3846) -- (1.5384,0.3846) -- (1.5384,0.7692) -- (1.1538,0.7692) -- cycle;
\draw[fill=white] (1.5384,0.3846) -- (1.923,0.3846) -- (1.923,0.7692) -- (1.5384,0.7692) -- cycle;
\draw[fill=white] (1.923,0.3846) -- (2.3076,0.3846) -- (2.3076,0.7692) -- (1.923,0.7692) -- cycle;
\draw[fill=white] (2.3076,0.3846) -- (2.6922,0.3846) -- (2.6922,0.7692) -- (2.3076,0.7692) -- cycle;
\draw[fill=white] (2.6922,0.3846) -- (3.0768,0.3846) -- (3.0768,0.7692) -- (2.6922,0.7692) -- 
cycle;
\draw[fill=white] (3.0768,0.3846) -- (3.4614,0.3846) -- (3.4614,0.7692) -- (3.0768,0.7692) -- cycle;
\draw[fill=white] (3.4614,0.3846) -- (3.846,0.3846) -- (3.846,0.7692) -- (3.4614,0.7692) -- cycle;
\draw[fill=white] (3.846,0.3846) -- (4.2306,0.3846) -- (4.2306,0.7692) -- (3.846,0.7692) -- cycle;
\draw[fill=white] (4.2306,0.3846) -- (4.6152,0.3846) -- (4.6152,0.7692) -- (4.2306,0.7692) -- cycle;
\draw[fill=white] (4.6152,0.3846) -- (5,0.3846) -- (5,0.7692) -- (4.6152,0.7692) -- cycle;

\draw[fill=black] (0,0) -- (0.3846,0) -- (0.3846,0.3846) -- (0,0.3846) -- cycle;
\draw[fill=black] (0.3846,0) -- (0.7692,0) -- (0.7692,0.3846) -- (0.3846,0.3846) -- cycle;
\draw[fill=white] (0.7692,0) -- (1.1538,0) -- (1.1538,0.3846) -- (0.7692,0.3846) -- cycle;
\draw[fill=white] (1.1538,0) -- (1.5384,0) -- (1.5384,0.3846) -- (1.1538,0.3846) -- cycle;
\draw[fill=white] (1.5384,0) -- (1.923,0) -- (1.923,0.3846) -- (1.5384,0.3846) -- cycle;
\draw[fill=white] (1.923,0) -- (2.3076,0) -- (2.3076,0.3846) -- (1.923,0.3846) -- cycle;
\draw[fill=white] (2.3076,0) -- (2.6922,0) -- (2.6922,0.3846) -- (2.3076,0.3846) -- cycle;
\draw[fill=white] (2.6922,0) -- (3.0768,0) -- (3.0768,0.3846) -- (2.6922,0.3846) -- 
cycle;
\draw[fill=white] (3.0768,0) -- (3.4614,0) -- (3.4614,0.3846) -- (3.0768,0.3846) -- cycle;
\draw[fill=white] (3.4614,0) -- (3.846,0) -- (3.846,0.3846) -- (3.4614,0.3846) -- cycle;
\draw[fill=white] (3.846,0) -- (4.2306,0) -- (4.2306,0.3846) -- (3.846,0.3846) -- cycle;
\draw[fill=white] (4.2306,0) -- (4.6152,0) -- (4.6152,0.3846) -- (4.2306,0.3846) -- cycle;
\draw[fill=white] (4.6152,0) -- (5,0) -- (5,0.3846) -- (4.6152,0.3846) -- cycle;

\draw[fill=black] (0,-0.3846) -- (0.3846,-0.3846) -- (0.3846,0) -- (0,0) -- cycle;
\draw[fill=black] (0.3846,-0.3846) -- (0.7692,-0.3846) -- (0.7692,0) -- (0.3846,0) -- cycle;
\draw[fill=black] (0.7692,-0.3846) -- (1.1538,-0.3846) -- (1.1538,0) -- (0.7692,0) -- cycle;
\draw[fill=white] (1.1538,-0.3846) -- (1.5384,-0.3846) -- (1.5384,0) -- (1.1538,0) -- cycle;
\draw[fill=white] (1.5384,-0.3846) -- (1.923,-0.3846) -- (1.923,0) -- (1.5384,0) -- cycle;
\draw[fill=white] (1.923,-0.3846) -- (2.3076,-0.3846) -- (2.3076,0) -- (1.923,0) -- cycle;
\draw[fill=white] (2.3076,-0.3846) -- (2.6922,-0.3846) -- (2.6922,0) -- (2.3076,0) -- cycle;
\draw[fill=white] (2.6922,-0.3846) -- (3.0768,-0.3846) -- (3.0768,0) -- (2.6922,0) -- 
cycle;
\draw[fill=white] (3.0768,-0.3846) -- (3.4614,-0.3846) -- (3.4614,0) -- (3.0768,0) -- cycle;
\draw[fill=white] (3.4614,-0.3846) -- (3.846,-0.3846) -- (3.846,0) -- (3.4614,0) -- cycle;
\draw[fill=white] (3.846,-0.3846) -- (4.2306,-0.3846) -- (4.2306,0) -- (3.846,0) -- cycle;
\draw[fill=white] (4.2306,-0.3846) -- (4.6152,-0.3846) -- (4.6152,0) -- (4.2306,0) -- cycle;
\draw[fill=white] (4.6152,-0.3846) -- (5,-0.3846) -- (5,0) -- (4.6152,0) -- cycle;

\draw[fill=black] (0,-0.7692) -- (0.3846,-0.7692) -- (0.3846,-0.3846) -- (0,-0.3846) -- cycle;
\draw[fill=black] (0.3846,-0.7692) -- (0.7692,-0.7692) -- (0.7692,-0.3846) -- (0.3846,-0.3846) -- cycle;
\draw[fill=white] (0.7692,-0.7692) -- (1.1538,-0.7692) -- (1.1538,-0.3846) -- (0.7692,-0.3846) -- cycle;
\draw[fill=black] (1.1538,-0.7692) -- (1.5384,-0.7692) -- (1.5384,-0.3846) -- (1.1538,-0.3846) -- cycle;
\draw[fill=white] (1.5384,-0.7692) -- (1.923,-0.7692) -- (1.923,-0.3846) -- (1.5384,-0.3846) -- cycle;
\draw[fill=white] (1.923,-0.7692) -- (2.3076,-0.7692) -- (2.3076,-0.3846) -- (1.923,-0.3846) -- cycle;
\draw[fill=white] (2.3076,-0.7692) -- (2.6922,-0.7692) -- (2.6922,-0.3846) -- (2.3076,-0.3846) -- cycle;
\draw[fill=white] (2.6922,-0.7692) -- (3.0768,-0.7692) -- (3.0768,-0.3846) -- (2.6922,-0.3846) -- 
cycle;
\draw[fill=white] (3.0768,-0.7692) -- (3.4614,-0.7692) -- (3.4614,-0.3846) -- (3.0768,-0.3846) -- cycle;
\draw[fill=white] (3.4614,-0.7692) -- (3.846,-0.7692) -- (3.846,-0.3846) -- (3.4614,-0.3846) -- cycle;
\draw[fill=white] (3.846,-0.7692) -- (4.2306,-0.7692) -- (4.2306,-0.3846) -- (3.846,-0.3846) -- cycle;
\draw[fill=white] (4.2306,-0.7692) -- (4.6152,-0.7692) -- (4.6152,-0.3846) -- (4.2306,-0.3846) -- cycle;
\draw[fill=white] (4.6152,-0.7692) -- (5,-0.7692) -- (5,-0.3846) -- (4.6152,-0.3846) -- cycle;

\draw[fill=black] (0,-1.1538) -- (0.3846,-1.1538) -- (0.3846,-0.7692) -- (0,-0.7692) -- cycle;
\draw[fill=white] (0.3846,-1.1538) -- (0.7692,-1.1538) -- (0.7692,-0.7692) -- (0.3846,-0.7692) -- cycle;
\draw[fill=black] (0.7692,-1.1538) -- (1.1538,-1.1538) -- (1.1538,-0.7692) -- (0.7692,-0.7692) -- cycle;
\draw[fill=white] (1.1538,-1.1538) -- (1.5384,-1.1538) -- (1.5384,-0.7692) -- (1.1538,-0.7692) -- cycle;
\draw[fill=black] (1.5384,-1.1538) -- (1.923,-1.1538) -- (1.923,-0.7692) -- (1.5384,-0.7692) -- cycle;
\draw[fill=white] (1.923,-1.1538) -- (2.3076,-1.1538) -- (2.3076,-0.7692) -- (1.923,-0.7692) -- cycle;
\draw[fill=white] (2.3076,-1.1538) -- (2.6922,-1.1538) -- (2.6922,-0.7692) -- (2.3076,-0.7692) -- cycle;
\draw[fill=white] (2.6922,-1.1538) -- (3.0768,-1.1538) -- (3.0768,-0.7692) -- (2.6922,-0.7692) -- 
cycle;
\draw[fill=white] (3.0768,-1.1538) -- (3.4614,-1.1538) -- (3.4614,-0.7692) -- (3.0768,-0.7692) -- cycle;
\draw[fill=white] (3.4614,-1.1538) -- (3.846,-1.1538) -- (3.846,-0.7692) -- (3.4614,-0.7692) -- cycle;
\draw[fill=white] (3.846,-1.1538) -- (4.2306,-1.1538) -- (4.2306,-0.7692) -- (3.846,-0.7692) -- cycle;
\draw[fill=white] (4.2306,-1.1538) -- (4.6152,-1.1538) -- (4.6152,-0.7692) -- (4.2306,-0.7692) -- cycle;
\draw[fill=white] (4.6152,-1.1538) -- (5,-1.1538) -- (5,-0.7692) -- (4.6152,-0.7692) -- cycle;

\draw[fill=black] (0,-1.5384) -- (0.3846,-1.5384) -- (0.3846,-1.1538) -- (0,-1.1538) -- cycle;
\draw[fill=black] (0.3846,-1.5384) -- (0.7692,-1.5384) -- (0.7692,-1.1538) -- (0.3846,-1.1538) -- cycle;
\draw[fill=black] (0.7692,-1.5384) -- (1.1538,-1.5384) -- (1.1538,-1.1538) -- (0.7692,-1.1538) -- cycle;
\draw[fill=black] (1.1538,-1.5384) -- (1.5384,-1.5384) -- (1.5384,-1.1538) -- (1.1538,-1.1538) -- cycle;
\draw[fill=white] (1.5384,-1.5384) -- (1.923,-1.5384) -- (1.923,-1.1538) -- (1.5384,-1.1538) -- cycle;
\draw[fill=black] (1.923,-1.5384) -- (2.3076,-1.5384) -- (2.3076,-1.1538) -- (1.923,-1.1538) -- cycle;
\draw[fill=white] (2.3076,-1.5384) -- (2.6922,-1.5384) -- (2.6922,-1.1538) -- (2.3076,-1.1538) -- cycle;
\draw[fill=white] (2.6922,-1.5384) -- (3.0768,-1.5384) -- (3.0768,-1.1538) -- (2.6922,-1.1538) -- 
cycle;
\draw[fill=white] (3.0768,-1.5384) -- (3.4614,-1.5384) -- (3.4614,-1.1538) -- (3.0768,-1.1538) -- cycle;
\draw[fill=white] (3.4614,-1.5384) -- (3.846,-1.5384) -- (3.846,-1.1538) -- (3.4614,-1.1538) -- cycle;
\draw[fill=white] (3.846,-1.5384) -- (4.2306,-1.5384) -- (4.2306,-1.1538) -- (3.846,-1.1538) -- cycle;
\draw[fill=white] (4.2306,-1.5384) -- (4.6152,-1.5384) -- (4.6152,-1.1538) -- (4.2306,-1.1538) -- cycle;
\draw[fill=white] (4.6152,-1.5384) -- (5,-1.5384) -- (5,-1.1538) -- (4.6152,-1.1538) -- cycle;

\draw[fill=black] (0,-1.923) -- (0.3846,-1.923) -- (0.3846,-1.5384) -- (0,-1.5384) -- cycle;
\draw[fill=white] (0.3846,-1.923) -- (0.7692,-1.923) -- (0.7692,-1.5384) -- (0.3846,-1.5384) -- cycle;
\draw[fill=white] (0.7692,-1.923) -- (1.1538,-1.923) -- (1.1538,-1.5384) -- (0.7692,-1.5384) -- cycle;
\draw[fill=black] (1.1538,-1.923) -- (1.5384,-1.923) -- (1.5384,-1.5384) -- (1.1538,-1.5384) -- cycle;
\draw[fill=white] (1.5384,-1.923) -- (1.923,-1.923) -- (1.923,-1.5384) -- (1.5384,-1.5384) -- cycle;
\draw[fill=white] (1.923,-1.923) -- (2.3076,-1.923) -- (2.3076,-1.5384) -- (1.923,-1.5384) -- cycle;
\draw[fill=black] (2.3076,-1.923) -- (2.6922,-1.923) -- (2.6922,-1.5384) -- (2.3076,-1.5384) -- cycle;
\draw[fill=white] (2.6922,-1.923) -- (3.0768,-1.923) -- (3.0768,-1.5384) -- (2.6922,-1.5384) -- 
cycle;
\draw[fill=white] (3.0768,-1.923) -- (3.4614,-1.923) -- (3.4614,-1.5384) -- (3.0768,-1.5384) -- cycle;
\draw[fill=white] (3.4614,-1.923) -- (3.846,-1.923) -- (3.846,-1.5384) -- (3.4614,-1.5384) -- cycle;
\draw[fill=white] (3.846,-1.923) -- (4.2306,-1.923) -- (4.2306,-1.5384) -- (3.846,-1.5384) -- cycle;
\draw[fill=white] (4.2306,-1.923) -- (4.6152,-1.923) -- (4.6152,-1.5384) -- (4.2306,-1.5384) -- cycle;
\draw[fill=white] (4.6152,-1.923) -- (5,-1.923) -- (5,-1.5384) -- (4.6152,-1.5384) -- cycle;

\draw[fill=black] (0,-2.3076) -- (0.3846,-2.3076) -- (0.3846,-1.923) -- (0,-1.923) -- cycle;
\draw[fill=white] (0.3846,-2.3076) -- (0.7692,-2.3076) -- (0.7692,-1.923) -- (0.3846,-1.923) -- cycle;
\draw[fill=black] (0.7692,-2.3076) -- (1.1538,-2.3076) -- (1.1538,-1.923) -- (0.7692,-1.923) -- cycle;
\draw[fill=black] (1.1538,-2.3076) -- (1.5384,-2.3076) -- (1.5384,-1.923) -- (1.1538,-1.923) -- cycle;
\draw[fill=black] (1.5384,-2.3076) -- (1.923,-2.3076) -- (1.923,-1.923) -- (1.5384,-1.923) -- cycle;
\draw[fill=white] (1.923,-2.3076) -- (2.3076,-2.3076) -- (2.3076,-1.923) -- (1.923,-1.923) -- cycle;
\draw[fill=white] (2.3076,-2.3076) -- (2.6922,-2.3076) -- (2.6922,-1.923) -- (2.3076,-1.923) -- cycle;
\draw[fill=black] (2.6922,-2.3076) -- (3.0768,-2.3076) -- (3.0768,-1.923) -- (2.6922,-1.923) -- 
cycle;
\draw[fill=white] (3.0768,-2.3076) -- (3.4614,-2.3076) -- (3.4614,-1.923) -- (3.0768,-1.923) -- cycle;
\draw[fill=white] (3.4614,-2.3076) -- (3.846,-2.3076) -- (3.846,-1.923) -- (3.4614,-1.923) -- cycle;
\draw[fill=white] (3.846,-2.3076) -- (4.2306,-2.3076) -- (4.2306,-1.923) -- (3.846,-1.923) -- cycle;
\draw[fill=white] (4.2306,-2.3076) -- (4.6152,-2.3076) -- (4.6152,-1.923) -- (4.2306,-1.923) -- cycle;
\draw[fill=white] (4.6152,-2.3076) -- (5,-2.3076) -- (5,-1.923) -- (4.6152,-1.923) -- cycle;

\draw[fill=black] (0,-2.6922) -- (0.3846,-2.6922) -- (0.3846,-2.3076) -- (0,-2.3076) -- cycle;
\draw[fill=black] (0.3846,-2.6922) -- (0.7692,-2.6922) -- (0.7692,-2.3076) -- (0.3846,-2.3076) -- cycle;
\draw[fill=white] (0.7692,-2.6922) -- (1.1538,-2.6922) -- (1.1538,-2.3076) -- (0.7692,-2.3076) -- cycle;
\draw[fill=white] (1.1538,-2.6922) -- (1.5384,-2.6922) -- (1.5384,-2.3076) -- (1.1538,-2.3076) -- cycle;
\draw[fill=white] (1.5384,-2.6922) -- (1.923,-2.6922) -- (1.923,-2.3076) -- (1.5384,-2.3076) -- cycle;
\draw[fill=white] (1.923,-2.6922) -- (2.3076,-2.6922) -- (2.3076,-2.3076) -- (1.923,-2.3076) -- cycle;
\draw[fill=white] (2.3076,-2.6922) -- (2.6922,-2.6922) -- (2.6922,-2.3076) -- (2.3076,-2.3076) -- cycle;
\draw[fill=white] (2.6922,-2.6922) -- (3.0768,-2.6922) -- (3.0768,-2.3076) -- (2.6922,-2.3076) -- 
cycle;
\draw[fill=black] (3.0768,-2.6922) -- (3.4614,-2.6922) -- (3.4614,-2.3076) -- (3.0768,-2.3076) -- cycle;
\draw[fill=white] (3.4614,-2.6922) -- (3.846,-2.6922) -- (3.846,-2.3076) -- (3.4614,-2.3076) -- cycle;
\draw[fill=white] (3.846,-2.6922) -- (4.2306,-2.6922) -- (4.2306,-2.3076) -- (3.846,-2.3076) -- cycle;
\draw[fill=white] (4.2306,-2.6922) -- (4.6152,-2.6922) -- (4.6152,-2.3076) -- (4.2306,-2.3076) -- cycle;
\draw[fill=white] (4.6152,-2.6922) -- (5,-2.6922) -- (5,-2.3076) -- (4.6152,-2.3076) -- cycle;

\draw[fill=black] (0,-3.0768) -- (0.3846,-3.0768) -- (0.3846,-2.6922) -- (0,-2.6922) -- cycle;
\draw[fill=white] (0.3846,-3.0768) -- (0.7692,-3.0768) -- (0.7692,-2.6922) -- (0.3846,-2.6922) -- cycle;
\draw[fill=black] (0.7692,-3.0768) -- (1.1538,-3.0768) -- (1.1538,-2.6922) -- (0.7692,-2.6922) -- cycle;
\draw[fill=black] (1.1538,-3.0768) -- (1.5384,-3.0768) -- (1.5384,-2.6922) -- (1.1538,-2.6922) -- cycle;
\draw[fill=black] (1.5384,-3.0768) -- (1.923,-3.0768) -- (1.923,-2.6922) -- (1.5384,-2.6922) -- cycle;
\draw[fill=black] (1.923,-3.0768) -- (2.3076,-3.0768) -- (2.3076,-2.6922) -- (1.923,-2.6922) -- cycle;
\draw[fill=black] (2.3076,-3.0768) -- (2.6922,-3.0768) -- (2.6922,-2.6922) -- (2.3076,-2.6922) -- cycle;
\draw[fill=white] (2.6922,-3.0768) -- (3.0768,-3.0768) -- (3.0768,-2.6922) -- (2.6922,-2.6922) -- 
cycle;
\draw[fill=white] (3.0768,-3.0768) -- (3.4614,-3.0768) -- (3.4614,-2.6922) -- (3.0768,-2.6922) -- cycle;
\draw[fill=black] (3.4614,-3.0768) -- (3.846,-3.0768) -- (3.846,-2.6922) -- (3.4614,-2.6922) -- cycle;
\draw[fill=white] (3.846,-3.0768) -- (4.2306,-3.0768) -- (4.2306,-2.6922) -- (3.846,-2.6922) -- cycle;
\draw[fill=white] (4.2306,-3.0768) -- (4.6152,-3.0768) -- (4.6152,-2.6922) -- (4.2306,-2.6922) -- cycle;
\draw[fill=white] (4.6152,-3.0768) -- (5,-3.0768) -- (5,-2.6922) -- (4.6152,-2.6922) -- cycle;

\draw[fill=black] (0,-3.4614) -- (0.3846,-3.4614) -- (0.3846,-3.0768) -- (0,-3.0768) -- cycle;
\draw[fill=white] (0.3846,-3.4614) -- (0.7692,-3.4614) -- (0.7692,-3.0768) -- (0.3846,-3.0768) -- cycle;
\draw[fill=white] (0.7692,-3.4614) -- (1.1538,-3.4614) -- (1.1538,-3.0768) -- (0.7692,-3.0768) -- cycle;
\draw[fill=black] (1.1538,-3.4614) -- (1.5384,-3.4614) -- (1.5384,-3.0768) -- (1.1538,-3.0768) -- cycle;
\draw[fill=white] (1.5384,-3.4614) -- (1.923,-3.4614) -- (1.923,-3.0768) -- (1.5384,-3.0768) -- cycle;
\draw[fill=black] (1.923,-3.4614) -- (2.3076,-3.4614) -- (2.3076,-3.0768) -- (1.923,-3.0768) -- cycle;
\draw[fill=black] (2.3076,-3.4614) -- (2.6922,-3.4614) -- (2.6922,-3.0768) -- (2.3076,-3.0768) -- cycle;
\draw[fill=white] (2.6922,-3.4614) -- (3.0768,-3.4614) -- (3.0768,-3.0768) -- (2.6922,-3.0768) -- 
cycle;
\draw[fill=white] (3.0768,-3.4614) -- (3.4614,-3.4614) -- (3.4614,-3.0768) -- (3.0768,-3.0768) -- cycle;
\draw[fill=white] (3.4614,-3.4614) -- (3.846,-3.4614) -- (3.846,-3.0768) -- (3.4614,-3.0768) -- cycle;
\draw[fill=black] (3.846,-3.4614) -- (4.2306,-3.4614) -- (4.2306,-3.0768) -- (3.846,-3.0768) -- cycle;
\draw[fill=white] (4.2306,-3.4614) -- (4.6152,-3.4614) -- (4.6152,-3.0768) -- (4.2306,-3.0768) -- cycle;
\draw[fill=white] (4.6152,-3.4614) -- (5,-3.4614) -- (5,-3.0768) -- (4.6152,-3.0768) -- cycle;

\draw[fill=black] (0,-3.846) -- (0.3846,-3.846) -- (0.3846,-3.4614) -- (0,-3.4614) -- cycle;
\draw[fill=white] (0.3846,-3.846) -- (0.7692,-3.846) -- (0.7692,-3.4614) -- (0.3846,-3.4614) -- cycle;
\draw[fill=black] (0.7692,-3.846) -- (1.1538,-3.846) -- (1.1538,-3.4614) -- (0.7692,-3.4614) -- cycle;
\draw[fill=white] (1.1538,-3.846) -- (1.5384,-3.846) -- (1.5384,-3.4614) -- (1.1538,-3.4614) -- cycle;
\draw[fill=black] (1.5384,-3.846) -- (1.923,-3.846) -- (1.923,-3.4614) -- (1.5384,-3.4614) -- cycle;
\draw[fill=black] (1.923,-3.846) -- (2.3076,-3.846) -- (2.3076,-3.4614) -- (1.923,-3.4614) -- cycle;
\draw[fill=white] (2.3076,-3.846) -- (2.6922,-3.846) -- (2.6922,-3.4614) -- (2.3076,-3.4614) -- cycle;
\draw[fill=white] (2.6922,-3.846) -- (3.0768,-3.846) -- (3.0768,-3.4614) -- (2.6922,-3.4614) -- 
cycle;
\draw[fill=white] (3.0768,-3.846) -- (3.4614,-3.846) -- (3.4614,-3.4614) -- (3.0768,-3.4614) -- cycle;
\draw[fill=white] (3.4614,-3.846) -- (3.846,-3.846) -- (3.846,-3.4614) -- (3.4614,-3.4614) -- cycle;
\draw[fill=white] (3.846,-3.846) -- (4.2306,-3.846) -- (4.2306,-3.4614) -- (3.846,-3.4614) -- cycle;
\draw[fill=black] (4.2306,-3.846) -- (4.6152,-3.846) -- (4.6152,-3.4614) -- (4.2306,-3.4614) -- cycle;
\draw[fill=white] (4.6152,-3.846) -- (5,-3.846) -- (5,-3.4614) -- (4.6152,-3.4614) -- cycle;

\draw[fill=black] (0,-4.2306) -- (0.3846,-4.2306) -- (0.3846,-3.846) -- (0,-3.846) -- cycle;
\draw[fill=black] (0.3846,-4.2306) -- (0.7692,-4.2306) -- (0.7692,-3.846) -- (0.3846,-3.846) -- cycle;
\draw[fill=black] (0.7692,-4.2306) -- (1.1538,-4.2306) -- (1.1538,-3.846) -- (0.7692,-3.846) -- cycle;
\draw[fill=black] (1.1538,-4.2306) -- (1.5384,-4.2306) -- (1.5384,-3.846) -- (1.1538,-3.846) -- cycle;
\draw[fill=white] (1.5384,-4.2306) -- (1.923,-4.2306) -- (1.923,-3.846) -- (1.5384,-3.846) -- cycle;
\draw[fill=white] (1.923,-4.2306) -- (2.3076,-4.2306) -- (2.3076,-3.846) -- (1.923,-3.846) -- cycle;
\draw[fill=black] (2.3076,-4.2306) -- (2.6922,-4.2306) -- (2.6922,-3.846) -- (2.3076,-3.846) -- cycle;
\draw[fill=black] (2.6922,-4.2306) -- (3.0768,-4.2306) -- (3.0768,-3.846) -- (2.6922,-3.846) -- 
cycle;
\draw[fill=white] (3.0768,-4.2306) -- (3.4614,-4.2306) -- (3.4614,-3.846) -- (3.0768,-3.846) -- cycle;
\draw[fill=white] (3.4614,-4.2306) -- (3.846,-4.2306) -- (3.846,-3.846) -- (3.4614,-3.846) -- cycle;
\draw[fill=white] (3.846,-4.2306) -- (4.2306,-4.2306) -- (4.2306,-3.846) -- (3.846,-3.846) -- cycle;
\draw[fill=white] (4.2306,-4.2306) -- (4.6152,-4.2306) -- (4.6152,-3.846) -- (4.2306,-3.846) -- cycle;
\draw[fill=black] (4.6152,-4.2306) -- (5,-4.2306) -- (5,-3.846) -- (4.6152,-3.846) -- cycle;

\draw (-1.2,0.8) node[anchor=north west] {$\varepsilon$};
\draw (-1.2,0.5) node[anchor=north west] {$1$};
\draw (-1.2,0.1) node[anchor=north west] {$10$};
\draw (-1.2,-0.3) node[anchor=north west] {$100$};
\draw (-1.2,-0.7) node[anchor=north west] {$101$};
\draw (-1.2,-1.1) node[anchor=north west] {$1000$};
\draw (-1.2,-1.5) node[anchor=north west] {$1001$};
\draw (-1.2,-1.9) node[anchor=north west] {$1010$};
\draw (-1.2,-2.3) node[anchor=north west] {$10000$};
\draw (-1.2,-2.7) node[anchor=north west] {$10001$};
\draw (-1.2,-3.1) node[anchor=north west] {$10010$};
\draw (-1.2,-3.5) node[anchor=north west] {$10100$};
\draw (-1.2,-3.9) node[anchor=north west] {$10101$};

\draw (-0.1,1) node[anchor=north west, rotate=40] {$\varepsilon$};
\draw (0.2,1) node[anchor=north west, rotate=40] {$1$};
\draw (0.5,1) node[anchor=north west, rotate=40] {$10$};
\draw (0.9,1) node[anchor=north west, rotate=40] {$100$};
\draw (1.3,1) node[anchor=north west, rotate=40] {$101$};
\draw (1.7,1) node[anchor=north west, rotate=40] {$1000$};
\draw (2.1,1) node[anchor=north west, rotate=40] {$1001$};
\draw (2.5,1) node[anchor=north west, rotate=40] {$1010$};
\draw (2.9,1) node[anchor=north west, rotate=40] {$10000$};
\draw (3.3,1) node[anchor=north west, rotate=40] {$10001$};
\draw (3.7,1) node[anchor=north west, rotate=40] {$10010$};
\draw (4.1,1) node[anchor=north west, rotate=40] {$10100$};
\draw (4.5,1) node[anchor=north west, rotate=40] {$10101$};
\end{tikzpicture}
\end{center}
\caption{The set $\mathcal{U}^\varphi_5$.}
\end{subfigure}
\caption{The sets $\mathcal{U}^\varphi_3$, $\mathcal{U}^\varphi_4$ and $\mathcal{U}^\varphi_5$ when $\beta=\varphi$ is the golden ratio.}
\label{fig:exemples-U-1}
\end{figure}
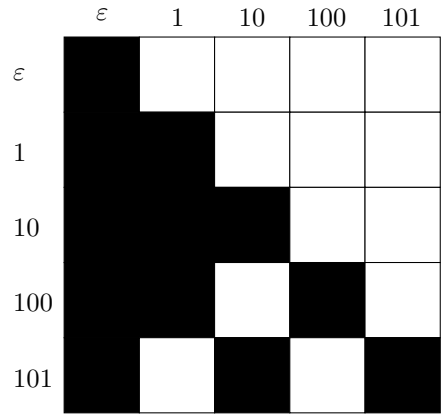
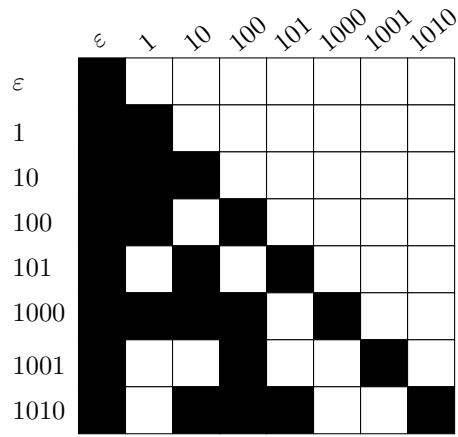
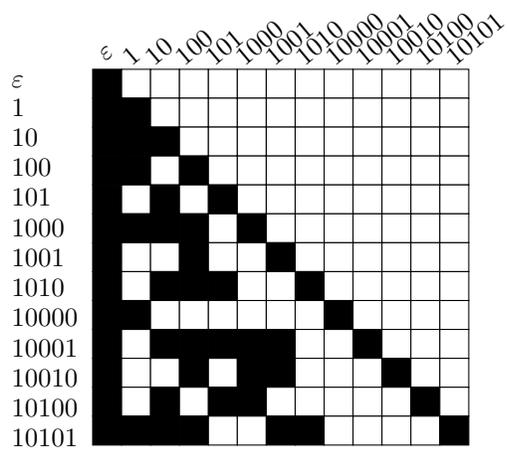
\end{center}
\end{example}

\begin{remark}\label{rem:pixel} 
Each pair $(u,v)$ of words of length at most $n$ with an odd binomial coefficient gives rise to a square region in $\U_n$. More precisely, we have the following situation. 
Let $n\ge 0$ and $u,v\in \Lang$ such that $0 \le |v|\le |u| \le n$ and $\binom{u}{v}\equiv 1\bmod{2}$. 
We have
$$((\val_{U_\beta}(v),\val_{U_\beta}(u)) + Q )/ U_\beta(n) \subset \U_n$$
as depicted in Figure~\ref{fig:vizu}.
\begin{figure}[h!tbp]
    \centering
\begin{tikzpicture}[line cap=round,line join=round,>=triangle 45,x=1.0cm,y=1.0cm, scale=.6]
\fill[fill=black,fill opacity=0.8] (-1,0) -- (-1,-1) -- (0,-1) -- (0,0) -- cycle;
\draw [->] (-3,5) -- (-3,-4);
\draw [->] (-3,5) -- (6,5);
\draw (-3.94,-3.66) node[anchor=north west] {$u$};
\draw (6,5.5) node[anchor=north west] {$v$};
\draw (-3.49,6) node[anchor=north west] {$0$};
\draw (-3.59,-2.7) node[anchor=north west] {$1$};
\draw (4.7,6) node[anchor=north west] {$1$};
\draw [dash pattern=on 5pt off 5pt] (-3,-3)-- (5,-3);
\draw [dash pattern=on 5pt off 5pt] (5,-3)-- (5,5);
\draw (-1,0)-- (-1,-1);
\draw (-1,-1)-- (0,-1);
\draw (0,-1)-- (0,0);
\draw (0,0)-- (-1,0);
\draw [dash pattern=on 5pt off 5pt] (-3,0)-- (-1,0);
\draw [dash pattern=on 5pt off 5pt] (-1,0)-- (-1,5);
\draw (-2,6.3) node[anchor=north west] {$\frac{\val_{U_\beta}(v)}{U_\beta(n)}$};
\draw (-5.6,0.75) node[anchor=north west] {$\frac{\val_{U_\beta}(u)}{U_\beta(n)}$};
\end{tikzpicture}
\caption{Visualization of a square region in $\U_n$.}
    \label{fig:vizu}
\end{figure}
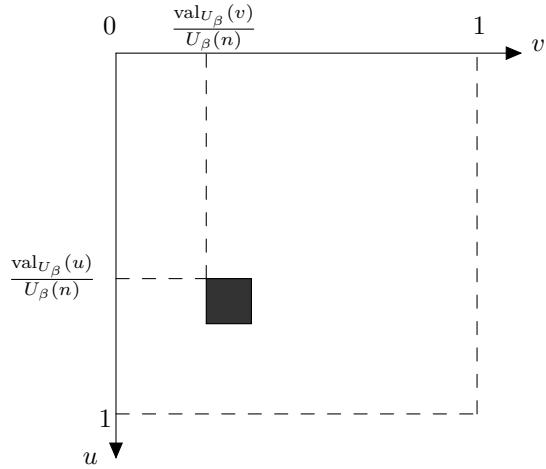
\end{remark}

We consider the space $(\mathcal{H}(\mathbb{R}^2),d_h)$ of the non-empty compact subsets of $\mathbb{R}^2$ equipped with the Hausdorff metric $d_h$ induced by the Euclidean distance $d$ on $\mathbb{R}^2$. 
It is well known that $(\mathcal{H}(\mathbb{R}^2),d_h)$ is complete~\cite{Falconer}. 
We let $B(x,\epsilon)$ denote the open ball of radius $\epsilon\ge 0$ centered at $x\in\mathbb{R}^2$ and, if $S\subset\mathbb{R}^2$, we let 
$$[S]_\epsilon:=\bigcup_{x\in S}B(x,\epsilon)$$
denote \emph{$\epsilon$-fattening} of $S$.

Our aim is to show that the sequence $(\U_n)_{n\ge 0}$ of compact subsets of $[0,1]\times[0,1]$ is converging and to provide an elementary description of its limit set. 
The idea is the following one. 
Let $(u,v)\in \Lang \times \Lang$ be a pair of words having an odd binomial coefficient. 
On the one hand, some of those pairs are such that $\binom{ua}{va} \equiv 0 \bmod{2}$ for all letters $a$ such that $ua,va \in \Lang$. 
In other words, those pairs of words create a black square region in $\U_{|u|}$ while the corresponding square region in $\U_{|u|+1}$ is white. 
As an example, take $\beta=\varphi$, $u=1010$ and $v=101$. 
We have $\binom{u0}{v0}=2$ (see Figure~\ref{fig:exemples-U-1}). 
On the other hand, some of those pairs create a more stable pattern, i.e., $\binom{uw}{vw} \equiv 1 \bmod{2}$ for all words $w$ such that $uw, vw \in \Lang$. 
Roughly, those pairs create a diagonal of square regions in $(\U_n)_{n\ge 0}$.
For instance, take $\beta=\varphi$, $u=101$ and $v=10$. In this case, $\binom{uw}{vw}\equiv 1 \bmod{2}$ for all admissible words $w$. In particular, the pairs of words $(u,v)$, $(u0,v0)$ and $(u00,v00)$, $(u01,v01)$ have odd binomial coefficients (see Figure~\ref{fig:exemples-U-1}) and create a diagonal of square regions. 
With the second type of pairs of words, we define a new sequence of compact subsets $(\AEns_n)_{n\ge 0}$ of $[0,1]\times[0,1]$ which converges to some well-defined limit set $\Llim$. 
Then, we show that the first sequence of compact sets $(\U_n)_{n\ge 0}$ also converges to this limit set. 
The remaining of this paper is dedicated to formalize and prove those statements. 
 
To reach that goal, for all non-empty words $u,v \in \Lang$, we first define the least integer $p$ such that $u0^pw, v0^pw$ belong to $\Lang$ for all words $w \in 0^*\Lang$.
In other terms, any word $w$ can be read after $u0^p$ and $v0^p$ in the automaton $\Aut$. 
Then, some pairs of words $(u,v)\in \Lang\times \Lang$ have the property that not only $\binom{u}{v}\equiv 1\bmod{2}$ but also $\binom{u0^pw}{v0^pw}\equiv 1\bmod{2}$ for all words $w\in 0^*\Lang$; see Corollary~\ref{cor:completion}. 
Such a property creates a particular pattern occurring in $\U_n$ for all sufficiently large $n$, as shown in Remark~\ref{rem:conv-diag}.

\begin{prop}\label{pro:def-p}
For all non-empty words $u,v\in \Lang$, there exists a smallest nonnegative integer $p(u,v)$ such that 
$$
(u0^{p(u,v)})^{-1} . \Lang = (v0^{p(u,v)})^{-1} . \Lang  = 0^* \Lang.
$$
\end{prop}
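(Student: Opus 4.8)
The plan is to translate the statement about right quotients into one about the states of the automaton $\Aut$ of Proposition~\ref{pro:auto}, using the fact recalled after Definition~\ref{def:bertrand} that $0^*\Lang$ is exactly the set of labels of paths in $\Aut$ starting at the initial state $a_0$. Write $q_u := \delta(a_0, u)$ and $q_v := \delta(a_0, v)$, which are defined because $u,v\in\Lang\subseteq 0^*\Lang$. Since $u$ begins with a non-zero letter, any word $u0^pw$ has no leading zero, so it lies in $\Lang$ iff it lies in $0^*\Lang$, i.e.\ iff it labels a path from $a_0$; and because the path of $u0^pw$ splits as the path of $u0^p$ followed by a path of $w$, this happens — \emph{whenever} $\delta(a_0,u0^p)=a_0$ — iff $w$ labels a path from $a_0$, that is iff $w\in 0^*\Lang$. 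Hence $\delta(a_0,u0^p)=a_0$ forces $(u0^p)^{-1}.\Lang = 0^*\Lang$, and likewise for $v$. It therefore suffices to produce a single exponent $p$ that drives both $q_u$ and $q_v$ back to $a_0$ upon reading $p$ zeros.

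First I would establish a uniform claim: there is an integer $P_0$, depending only on $\beta$, such that $\delta(q,0^{P_0})=a_0$ for every state $q$ of $\Aut$. Reading a single $0$ is possible from every state, and at $a_0$ it loops, since $t_1=\lceil\beta\rceil-1\ge 1$ places $0$ in the loop-label set $\{0,\ldots,t_1-1\}$. From a state $a_i$ with $i\ge 1$, reading $0$ returns to $a_0$ when the relevant forward digit $t_{i+1}$ is positive, but advances to $a_{i+1}$ when that digit is $0$. Thus the $0$-transition is a self-map of the finite state set, and the claim amounts to showing that its only cycle is the fixed point $a_0$, i.e.\ that the forward-on-zero motion cannot loop forever.

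The crux — and the step forcing the two cases of Proposition~\ref{pro:auto} to be handled separately — is exactly ruling out such an infinite forward motion. When $d_\beta(1)=t_1\cdots t_m$ is finite, the terminal digit satisfies $t_m\neq 0$, so the chain of zero-labelled forward edges through $a_0,\ldots,a_{m-1}$ must eventually reach a state from which $0$ falls back to $a_0$; reading $m$ zeros from any state then lands at $a_0$. When $d_\beta(1)=t_1\cdots t_m(t_{m+1}\cdots t_{m+k})^\omega$ is ultimately periodic but not finite, the period $t_{m+1}\cdots t_{m+k}$ cannot be identically zero (otherwise $d_\beta(1)$ would be finite, contradicting the case hypothesis and the minimality of $m,k$), so following zero-labelled forward edges around the cycle meets, within one full period, a positive forward digit and drops back to $a_0$. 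In both cases the functional graph of the $0$-transition has $a_0$ as its unique cycle, so taking $P_0$ equal to the number of states guarantees $\delta(q,0^{P_0})=a_0$ for all $q$.

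Finally I would assemble the pieces: for given non-empty $u,v\in\Lang$, the exponent $p=P_0$ gives $\delta(a_0,u0^{P_0})=\delta(q_u,0^{P_0})=a_0$ and similarly $\delta(a_0,v0^{P_0})=a_0$, whence by the first paragraph $(u0^{P_0})^{-1}.\Lang=(v0^{P_0})^{-1}.\Lang=0^*\Lang$. The set of exponents $p$ for which the displayed equalities hold is thus non-empty, and being a set of non-negative integers it has a least element, which we name $p(u,v)$. The main obstacle is the automaton analysis yielding $P_0$, specifically the care required when some digits $t_j$ vanish: in that situation reading a $0$ advances rather than resets the computation, and the termination of this forward motion relies on $t_m\neq 0$ in the simple case and on the non-vanishing of the period in the non-simple case.
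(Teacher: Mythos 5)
Your proof is correct and follows essentially the same route as the paper: reduce the quotient condition to the condition $\delta(a_0,u0^p)=a_0=\delta(a_0,v0^p)$ in the automaton $\Aut$ and then take the least such $p$. The only difference is that you explicitly justify the existence of an exponent returning every state to $a_0$ under the $0$-transition (using $t_m\neq 0$ in the simple case and the non-vanishing of the period otherwise), a step the paper's proof takes for granted.
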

\begin{proof}
Using Proposition~\ref{pro:auto}, take $p(u,v)$ to be the least nonnegative integer $p$ such that $\delta( a_0, u0^{p}) = a_0 = \delta( a_0, v0^{p})$. 
Then, for any word $w \in 0^* \Lang$, the words $u0^{p(u,v)}w, v0^{p(u,v)}w$ are labels of paths in $\Aut$. Consequently, they are words in $\Lang$.
Conversely, if the words $u0^{p(u,v)}w, v0^{p(u,v)}w$ are labels of paths in $\Aut$, then $w \in 0^* \Lang$.
\end{proof}

In the following, we will be using $p(\varepsilon,\varepsilon)$. 
Observe that, using Proposition~\ref{pro:auto}, $\delta(a_0, \varepsilon) = a_0$. 
We naturally set $p(\varepsilon,\varepsilon):=0$ and we thus have $(\varepsilon0^{p(\varepsilon,\varepsilon)})^{-1} . \Lang  = \Lang$.

\begin{example}
If $\beta > 1$ is an integer, then $p(u,v)=0$ for all $u,v\in \Lang$. See Example~\ref{ex:integer}.
\end{example}

\begin{example}\label{ex:p-Fib}
If $\beta =\varphi$ is the golden ratio, then $p(u,v)=0$ if and only if $u$ and $v$ end with $0$ or $u=v=\varepsilon$, otherwise $p(u,v)=1$.
\end{example}

The integer of Proposition~\ref{pro:def-p} can be greater than $1$ as illustrated in the following example.

\begin{example}\label{ex:p-beta-plusgrand2}
Let $\beta$ be the dominant root of the polynomial $P(X)=X^4-2X^3-X^2-1$. 
Then $\beta \approx 2.47098$ is a Parry number with $d_\beta(1)=2101$ and $d_\beta^*(1)=(2100)^\omega$. 
The automaton $\Aut$ is depicted in Figure~\ref{fig:Aut-beta-ex-1}. 
For instance, $p(101,21)=2$.
\begin{figure}
\centering
\begin{tikzpicture}
\tikzstyle{every node}=[shape=circle,fill=none,draw=black,minimum size=20pt,inner sep=2pt]
\node[accepting](a) at (0,0) {$a_0$};
\node[accepting](b) at (2,0) {$a_1$};
\node[accepting](c) at (4,0) {$a_2$};
\node[accepting](d) at (6,0) {$a_3$};

\tikzstyle{every node}=[shape=circle,fill=none,draw=none,minimum size=10pt,inner sep=2pt]
\node(a0) at (-1,0) {};
\node(a1) at (0,1) {$0,1$};

\tikzstyle{every path}=[color =black, line width = 0.5 pt]
\tikzstyle{every node}=[shape=circle,minimum size=5pt,inner sep=2pt]
\draw [->] (a0) to [] node [] {}  (a);
\draw [->] (a) to [loop above] node [] {}  (a);

\draw [->] (a) to [bend left] node [above] {$2$}  (b);
\draw [->] (b) to [bend left] node [below] {$0$}  (a);
\draw [->] (b) to [] node [above] {$1$}  (c);
\draw [->] (c) to [] node [above] {$0$}  (d);

\draw [->] (d) to [bend right=50] node [above] {$0$}  (a);

;
\end{tikzpicture}
\caption{The automaton $\Aut$ for the dominant root $\beta$ of the polynomial $P(X)=X^4-2X^3-X^2-1$.}
\label{fig:Aut-beta-ex-1}
\end{figure}
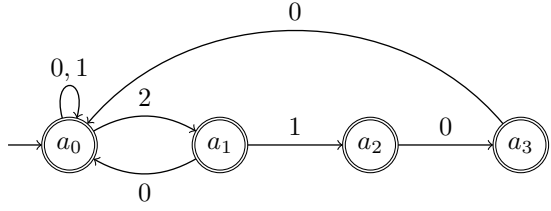
\end{example}

\begin{definition}\label{def:cond*}
Let $(u,v)\in \Lang\times \Lang$. 
We say that $(u,v)$ satisfies the $(\star)$ condition if either $u=v=\varepsilon$, or $|u|\ge|v|>0$ and
$$\binom{u0^{p(u,v)}}{v0^{p(u,v)}}\equiv 1\bmod{2} \quad \text{and} \quad \binom{u0^{p(u,v)}}{v0^{p(u,v)}a} = 0 \quad \forall \, a \in \Alp$$
where $p(u,v)$ is defined by Proposition~\ref{pro:def-p}. 
Observe that, if $(u,v)\neq (\varepsilon, \varepsilon)$, then $v0^{p(u,v)}a\in \Lang$ for all $a \in \Alp$.
\end{definition}

\begin{remark}
Observe that if only one of the two words $u$ or $v$ is empty, then the pair $(u,v)$ never satisfies $(\star)$.
\end{remark}

The next lemma shows that all diagonal elements of $\U_n$ satisfy $(\star)$.

\begin{lemma}\label{lem:u-u}
For any word $u\in \Lang$, the pair $(u,u)$ satisfies $(\star)$.
\end{lemma}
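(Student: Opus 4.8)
The plan is to read off the two clauses of Definition~\ref{def:cond*} directly. First I would dispose of the degenerate case: if $u=\varepsilon$, then the pair $(u,u)=(\varepsilon,\varepsilon)$ satisfies $(\star)$ at once, by the first alternative of the definition. Hence it remains only to treat non-empty $u$.

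Assume now $u\neq\varepsilon$ and write $p:=p(u,u)$ and $w:=u0^{p}$ for brevity, where $p(u,u)$ is the integer furnished by Proposition~\ref{pro:def-p}. The length requirement $|u|\ge|u|>0$ is immediate, so what remains is to establish the two displayed conditions with $v=u$, namely $\binom{w}{w}\equiv 1\bmod 2$ and $\binom{w}{wa}=0$ for every $a\in\Alp$.

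For the first condition I would invoke the elementary fact that $\binom{w}{w}=1$ for any finite word $w$: the only way to realize $w$ as a scattered subword of itself is to select every letter, in order, so there is exactly one such occurrence. Consequently $\binom{w}{w}=1\equiv 1\bmod 2$. For the second, fix a letter $a\in\Alp$ and note that $|wa|=|w|+1>|w|$. Since any scattered subword of $w$ has length at most $|w|$, the word $wa$ cannot occur as a subsequence of $w$, whence $\binom{w}{wa}=0$. This establishes both conditions and hence that $(u,u)$ satisfies $(\star)$.

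I do not expect any genuine obstacle here: the statement reduces to two basic length-and-counting properties of word binomial coefficients (one occurrence of a word in itself, and no occurrence of a strictly longer word), and the only point requiring attention is to peel off the empty-word case at the outset, so as to match the first clause of the $(\star)$ condition rather than the length-inequality clause.
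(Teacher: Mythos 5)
Your proof is correct and follows essentially the same route as the paper: the empty-word case is handled by the first clause of Definition~\ref{def:cond*}, and for non-empty $u$ both conditions of $(\star)$ are verified directly from $\binom{u0^{p}}{u0^{p}}=1$ and the length inequality $|u0^{p}a|>|u0^{p}|$. No gaps.
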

\begin{proof}
If $u=\varepsilon$, the result is clear using Definition~\ref{def:cond*}.
Suppose $u$ is non-empty and let $p:=p(u,u)$ denote the integer from Proposition~\ref{pro:def-p}. 
Then, for all $\, a\in \Alp$, we have
$$\binom{u0^{p}}{u0^{p}}=1 \equiv 1\bmod{2} \quad \text{and} \quad \binom{u0^{p}}{u0^{p}a} = 0$$
since $|u0^{p}a| > |u0^{p}|$.
\end{proof}

If a pair of words satisfies $(\star)$, it has the following two properties. 
First, its binomial coefficient is odd, as stated in the following proposition. 
Secondly, it creates a special pattern in $\U_n$ for all large enough $n$; see Proposition~\ref{pro:completion}, Corollary~\ref{cor:completion} and Remark~\ref{rem:conv-diag}.

\begin{prop}\label{pro:binom-equal-1}
Let $(u,v) \in \Lang\times \Lang$ satisfying $(\star)$. 
Then 
$$
\binom{u}{v} \equiv 1\bmod{2}. 
$$
\end{prop}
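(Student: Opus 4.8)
The plan is to use Lemma~\ref{lem:bin-zeroes} to express the padded coefficient $\binom{u0^{p}}{v0^{p}}$ (which is exactly what the $(\star)$ condition controls) in terms of the bare coefficients $\binom{u}{v0^{j}}$, and then to show that the \emph{second} half of $(\star)$ forces every correction term with $j\ge 1$ to vanish. Throughout I would write $p:=p(u,v)$ and set $f(j):=\binom{u}{v0^{j}}\ge 0$. First I would dispose of the degenerate case $u=v=\varepsilon$, where $\binom{\varepsilon}{\varepsilon}=1$ settles the claim directly. So assume $|u|\ge|v|>0$.

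The key step is to feed the vanishing part of $(\star)$ into Lemma~\ref{lem:bin-zeroes} \emph{after padding $v$ to $v0$}. Since $U_\beta$ is a Parry--Bertrand numeration system, $v0\in\Lang$, so the lemma applied to the non-empty words $u$ and $v0$ gives
$$\binom{u0^{p}}{v0^{p+1}}=\binom{u0^{p}}{(v0)0^{p}}=\sum_{j=0}^{p}\binom{p}{j}\binom{u}{v0^{j+1}}=\sum_{j=0}^{p}\binom{p}{j}\,f(j+1).$$
The second condition in $(\star)$, specialized to the letter $a=0$, states precisely that $\binom{u0^{p}}{v0^{p+1}}=0$. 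Thus the right-hand side is a sum of nonnegative integers with strictly positive coefficients $\binom{p}{j}$ that equals $0$, which forces $f(j)=0$ for every $1\le j\le p+1$; in particular $\binom{u}{v0^{j}}=0$ for all $1\le j\le p$.

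Finally I would apply Lemma~\ref{lem:bin-zeroes} to $u$ and $v$ themselves:
$$\binom{u0^{p}}{v0^{p}}=\sum_{j=0}^{p}\binom{p}{j}\,f(j)=f(0)=\binom{u}{v},$$
all terms with $j\ge 1$ having just been shown to vanish. Combining this with the congruence $\binom{u0^{p}}{v0^{p}}\equiv 1\bmod 2$ coming from the first half of $(\star)$ yields $\binom{u}{v}\equiv 1\bmod 2$, as desired.

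The main obstacle — and essentially the only non-routine point — is recognizing that the auxiliary condition $\binom{u0^{p}}{v0^{p}a}=0$ becomes usable once one absorbs the extra letter $a=0$ into the bottom word through the substitution $v\mapsto v0$ and invokes Lemma~\ref{lem:bin-zeroes}; the Bertrand property is needed exactly to guarantee $v0\in\Lang$ so that the lemma applies. After that, nonnegativity of binomial coefficients of words does all the work, and no explicit analysis of the individual embeddings of $v0^{p}$ into $u0^{p}$ is required.
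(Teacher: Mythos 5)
Your proof is correct and follows essentially the same route as the paper's: both expand $\binom{u0^{p}}{v0^{p}}$ and $\binom{u0^{p}}{v0^{p}0}$ via Lemma~\ref{lem:bin-zeroes} and use the vanishing condition in $(\star)$ to control the correction terms $\binom{u}{v0^{j}}$. The only difference is that you argue directly (the vanishing of $\binom{u0^{p}}{v0^{p}0}$ kills every term with $j\ge 1$, giving the exact equality $\binom{u0^{p}}{v0^{p}}=\binom{u}{v}$), whereas the paper runs the same computation as a proof by contradiction.
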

\begin{proof}
If $u=v=\varepsilon$, the result is clear by definition. 
Suppose that $u$ and $v$ are non-empty. 
Let us proceed by contradiction and suppose that $\binom{u}{v}$ is even. 
Let us set $p:=p(u,v)$ from Proposition~\ref{pro:def-p}. 
On the one hand, by Definition~\ref{def:cond*}, we know that
$$
\binom{u0^p}{v0^p} \equiv 1\bmod{2}
$$
and, on the other hand, Lemma~\ref{lem:bin-zeroes} states that
$$
\binom{u0^p}{v0^p} = \sum_{j=1}^p \binom{p}{j} \binom{u}{v0^j} + \binom{u}{v}.
$$
Consequently, we have
$$
\sum_{j=1}^p \binom{p}{j} \binom{u}{v0^j} \equiv 1\bmod{2} > 0 
$$
and there must exist $i\in\{1,\ldots,p\}$ such that $\binom{u}{v0^i}>0$.  
Using again Lemma~\ref{lem:bin-zeroes}, we also have
$$
\binom{u0^p}{v0^p0} = \sum_{j=1}^{p+1} \binom{p}{j-1} \binom{u}{v0^j} \ge \binom{p}{i-1} \binom{u}{v0^i} > 0,
$$
which contradicts Definition~\ref{def:cond*}.
\end{proof}

\begin{prop}\label{pro:completion}
Let $u,v \in \Lang$ be two non-empty words such that $(u,v)$ satisfies $(\star)$. 
For any letter $a\in \Alp$, the pair of words $( u0^{p(u,v)}a, v0^{p(u,v)}a) \in \Lang \times \Lang$ satisfies $(\star)$.
\end{prop}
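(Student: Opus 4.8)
The plan is to verify directly the two clauses of Definition~\ref{def:cond*} for the pair $(u',v') := (u0^{p}a, v0^{p}a)$, where I abbreviate $p := p(u,v)$ and write $p' := p(u',v')$ for the integer supplied by Proposition~\ref{pro:def-p}. It is convenient to set $U := u0^{p}$ and $V := v0^{p}$, so that the hypothesis ``$(u,v)$ satisfies $(\star)$'' reads $\binom{U}{V}\equiv 1\bmod 2$ and $\binom{U}{Vb}=0$ for every $b\in\Alp$. First I would dispose of the membership and length conditions: since $u,v$ are non-empty, $\delta(a_0,U)=a_0=\delta(a_0,V)$ by Proposition~\ref{pro:def-p}, so the observation following Definition~\ref{def:cond*} gives $u',v'\in\Lang$, and $|u'|\ge|v'|>0$ follows at once from $|u|\ge|v|>0$.

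The combinatorial engine I would isolate before any computation is a \emph{monotonicity} fact: if $\binom{U}{Vb}=0$ then $\binom{U}{Vbw}=0$ for every word $w$, because any scattered occurrence of $Vbw$ in $U$ truncates to an occurrence of $Vb$. Applying this to the first letter of an arbitrary word shows $\binom{U}{Vw}=0$ for \emph{every} non-empty $w\in\Alp^{*}$. Every subsequent vanishing is then a direct consequence of this single statement, combined with Lemmas~\ref{lem:lothaire-bin} and~\ref{lem:bin-zeroes}.

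For the first clause I would apply Lemma~\ref{lem:bin-zeroes} to expand $\binom{Ua0^{p'}}{Va0^{p'}}=\sum_{j=0}^{p'}\binom{p'}{j}\binom{Ua}{Va0^{j}}$; one application of Lemma~\ref{lem:lothaire-bin} (splitting off the final letters) together with the monotonicity fact gives $\binom{Ua}{Va0^{j}}=0$ for $j\ge 1$, leaving only the term $\binom{Ua}{Va}=\binom{U}{Va}+\binom{U}{V}=\binom{U}{V}\equiv 1\bmod 2$. For the second clause, $\binom{Ua0^{p'}}{Va0^{p'}b}=0$ for all $b\in\Alp$, I would split on $b$. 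When $b=0$ the bottom word is $(Va0)0^{p'}$, whose trailing block matches that of the top, so Lemma~\ref{lem:bin-zeroes} yields $\sum_{j}\binom{p'}{j}\binom{Ua}{Va0^{j+1}}=0$. When $b\neq 0$ I would peel the $p'$ trailing zeroes off the top one at a time via Lemma~\ref{lem:lothaire-bin}, each step contributing no correction term because $\delta_{0,b}=0$, reducing to $\binom{Ua}{Va0^{p'}b}$, which a last application of Lemma~\ref{lem:lothaire-bin} and the monotonicity fact annihilate.

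The step I expect to be the real obstacle is the index bookkeeping of the trailing-zero blocks, i.e.\ arranging the factorizations so that Lemma~\ref{lem:bin-zeroes} applies with matching exponents and so that the peeling in the case $b\neq 0$ terminates cleanly at $\binom{Ua}{Va0^{p'}b}$. A pleasant feature I would emphasize is that these computations are uniform in $p'$: they use only $p'\ge 0$, never the minimality of $p'=p(u',v')$, so no fine analysis of $\Aut$ is required beyond the membership check already made. As a guard against an off-by-one slip (such as dropping the final letter $b$), I would note the independent consistency check that when $|u|=|v|$ the word $Va0^{p'}b$ is strictly longer than $Ua0^{p'}$, forcing the second clause for length reasons alone.
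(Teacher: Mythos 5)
Your proof is correct and follows essentially the same route as the paper's: both expand $\binom{u0^pa0^{p'}}{v0^pa0^{p'}}$ via Lemma~\ref{lem:bin-zeroes} and Lemma~\ref{lem:lothaire-bin} and then kill every term except $\binom{u0^p}{v0^p}$ using the vanishing clause of $(\star)$, your ``monotonicity fact'' being exactly the paper's observation that a nonzero such term would force $v0^{p}a$ to occur in $u0^{p}$. You are merely more explicit where the paper says ``using the same argument'' for the second clause; no gap.
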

\begin{proof}
For the sake of clarity, set $p:=p(u,v)$.
Let $a$ be a letter in $\Alp$ and also set $p':= p(u0^{p}a,v0^{p}a)$.
By Lemma~\ref{lem:lothaire-bin} and Lemma~\ref{lem:bin-zeroes}, 
$$
\binom{u0^pa0^{p'}}{v0^pa0^{p'}} = \sum_{j=1}^{p'} \binom{p'}{j} \binom{u0^pa}{v0^pa0^{j}} + \binom{u0^p}{v0^pa} + \binom{u0^p}{v0^p}.
$$
Since $(u,v)$ satisfies $(\star)$, all the coefficients $\binom{u0^pa}{v0^pa0^{j}}$, for $j=1,\ldots,p'$, and $\binom{u0^p}{v0^pa}$ are equal to $0$. 
Otherwise, it means that the word $v0^pa$ appears as a subword of the word $u0^p$, which contradicts $(\star)$. 
Consequently, using Definition~\ref{def:cond*}, we get
$$
\binom{u0^pa0^{p'}}{v0^pa0^{p'}} = \binom{u0^p}{v0^p} \equiv 1\bmod{2}.
$$
Using the same argument, for any letter $b\in \Alp$, we have 
$$
\binom{u0^pa0^{p'}}{v0^pa0^{p'}b} = 0.
$$
\end{proof}

The next corollary extends Lemma~\ref{lem:u-u} when $(u,v)\neq (\varepsilon,\varepsilon)$.
Indeed, recall that $p(\varepsilon,\varepsilon)=0$.

\begin{corollary}\label{cor:completion}
Let $u,v \in \Lang$ be two non-empty words such that $(u,v)$ satisfies $(\star)$. 
Then
$$
\binom{u0^{p(u,v)} w}{v0^{p(u,v)} w} \equiv 1\bmod{2} \quad \forall \, w\in 0^*\Lang.
$$
\end{corollary}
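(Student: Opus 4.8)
The plan is to prove the statement by induction on the length of $w$, with $w$ ranging over $0^*\Lang$; throughout I abbreviate $p:=p(u,v)$. A naive induction on the bare congruence $\binom{u0^pw}{v0^pw}\equiv 1\bmod 2$ does not close up, because peeling off the last letter of $w$ via Lemma~\ref{lem:lothaire-bin} produces a cross term that I must already know to vanish. I will therefore strengthen the induction hypothesis to the conjunction
$$\binom{u0^pw}{v0^pw}\equiv 1\bmod 2 \qquad\text{and}\qquad \binom{u0^pw}{v0^pwa}=0 \ \ \forall\, a\in\Alp,$$
which exactly mirrors the two clauses of the $(\star)$ condition in Definition~\ref{def:cond*}. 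Carrying both parts along simultaneously is what makes the induction self-feeding.

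For the base case $w=\varepsilon$, the two assertions are literally the two clauses of $(\star)$ for $(u,v)$, so nothing is to be checked. For the inductive step I first observe that $0^*\Lang$ is prefix-closed: every nonempty word it contains is the label of a path from $a_0$ in $\Aut$, and since all states of $\Aut$ are accepting (Proposition~\ref{pro:auto}), any prefix is again such a label. Hence it suffices to pass from $w$ to $wa$ for a single letter $a$ with $wa\in 0^*\Lang$. Applying Lemma~\ref{lem:lothaire-bin} to strip the trailing $a$ gives $\binom{u0^pwa}{v0^pwa}=\binom{u0^pw}{v0^pwa}+\binom{u0^pw}{v0^pw}$; the first summand is $0$ by the vanishing half of the hypothesis and the second is odd by the congruence half, which yields the congruence for $wa$. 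For the vanishing half at $wa$, Lemma~\ref{lem:lothaire-bin} gives $\binom{u0^pwa}{v0^pwab}=\binom{u0^pw}{v0^pwab}+\delta_{a,b}\binom{u0^pw}{v0^pwa}$ for each letter $b$; the $\delta_{a,b}$ term vanishes by hypothesis, and $\binom{u0^pw}{v0^pwab}=0$ because $v0^pwa$ is a prefix of $v0^pwab$, so if the longer word occurred as a scattered subword of $u0^pw$ then so would $v0^pwa$, contradicting the hypothesis. This closes the induction, and the congruence half, read for an arbitrary $w\in 0^*\Lang$, is precisely the assertion of the corollary.

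I expect the main subtlety to be recognizing that the result cannot be obtained by merely iterating Proposition~\ref{pro:completion}: that proposition inserts a fresh block $0^{p(\cdot,\cdot)}$ before each appended letter, so chaining it produces $(\star)$-pairs of the form $(u0^pa0^{p_1}b\cdots,\,\ldots)$ rather than the adjacent-letter pairs $(u0^pw,v0^pw)$ demanded here. The correct device is the strengthened two-clause induction above, driven directly by Lemma~\ref{lem:lothaire-bin}. The only step requiring genuine care is the vanishing-propagation argument, namely that $\binom{u0^pw}{v0^pwab}=0$ follows from $\binom{u0^pw}{v0^pwa}=0$; this rests on the elementary but essential observation that appending letters to the bottom word can only destroy, never create, occurrences as a scattered subword.
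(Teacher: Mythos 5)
Your proof is correct and follows essentially the same route as the paper's: an induction on $|w|$ driven by Lemma~\ref{lem:lothaire-bin}, with the cross term ultimately vanishing because an occurrence of the longer bottom word would force an occurrence of $v0^{p}a$ inside $u0^{p}$, contradicting the second clause of $(\star)$. The only organizational difference is that you carry the vanishing condition along as a strengthened induction hypothesis, whereas the paper writes $w=aw'b$ and kills the cross term $\binom{u0^paw'}{v0^paw'b}$ by that direct subword argument at each step --- so your opening claim that the naive induction on the bare congruence cannot be closed is not quite accurate, but this has no bearing on the validity of your argument.
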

\begin{proof}
Set $p:=p(u,v)$. 
From Proposition~\ref{pro:def-p}, $u0^{p}w, v0^{p}w$ belong to $\Lang$ for any word $w \in 0^*\Lang$. 
Now proceed by induction on the length of $w \in 0^* \Lang$. 
If $|w|=0$, then $w=\varepsilon$ is the empty word and the statement is true using Definition~\ref{def:cond*}.
If $|w|=1$, then $w=a$ is a letter belonging to $\Alp$. 
Then, by Proposition~\ref{pro:completion}, we know that $(u0^pa,v0^pa)$ satisfies $(\star)$. 
Using Proposition~\ref{pro:binom-equal-1}, we have
$$
\binom{u0^pa}{v0^pa} \equiv 1\bmod{2}.
$$
Now suppose that $|w|\ge 2$ and write $w=aw'b \in 0^* \Lang$ where $a,b$ are letters.
From Lemma~\ref{lem:lothaire-bin}, we deduce that
$$
\binom{u0^pw}{v0^pw}
= \binom{u0^paw'}{v0^paw'b} + \binom{u0^paw'}{v0^paw'}.
$$
By induction hypothesis, $\binom{u0^paw'}{v0^paw'}\equiv 1\bmod{2}$ since $aw' \in 0^*\Lang$ and $|aw'| < |w|$. 
Furthermore, $\binom{u0^paw'}{v0^paw'b}$ must be $0$, otherwise it means that the word $v0^pa$ occurs as a subword of the word $u0^p$, which contradicts the fact that $(u,v)$ satisfies $(\star)$. 
This ends the proof. 
\end{proof}

The next lemma is useful to characterize the pattern created in $\U_n$, for all sufficiently large $n$, by pairs of words satisfying $(\star)$, see Remark~\ref{rem:conv-diag}.
In this result, we make use of the convention given in Definition~\ref{def:beta-exp}.

\begin{lemma}\label{lem:origine-extremite}
Let $(u,v) \in \Lang \times \Lang$ satisfying $(\star)$. 
\begin{itemize}
\item[(a)]  The sequence 
$$
\left(\left( \frac{\val_{U_\beta}(v0^{p(u,v)+n})}{U_\beta(|u|+p(u,v)+n)},\frac{\val_{U_\beta}(u0^{p(u,v)+n})}{U_\beta(|u|+p(u,v)+n)}\right) \right)_{n\ge 0}
$$
converges to the pair of real numbers $( 0.0^{|u|-|v|} v , 0.u )$.
\item[(b)] For all $n\ge 0$, let $w=d_n$ denotes the prefix of length $n$ of $d_\beta^*(1)$. Then the sequence  
$$
\left(\left( \frac{\val_{U_\beta}(v0^{p(u,v)} d_n)}{U_\beta(|u|+p(u,v)+n)},\frac{\val_{U_\beta}(u0^{p(u,v)} d_n)}{U_\beta(|u|+p(u,v)+n)}\right)\right)_{n\ge 0}
$$
converges to the pair of real numbers $(0.0^{|u|-|v|} v 0^{p(u,v)} d_\beta^*(1), 0.u 0^{p(u,v)} d_\beta^*(1))$.
\end{itemize}
\end{lemma}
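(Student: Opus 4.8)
The plan is to unfold every occurrence of $\val_{U_\beta}$ via Definition~\ref{def:lin-num-syst}, divide by the common denominator $U_\beta(|u|+p+n)$ (abbreviating $p:=p(u,v)$), and let $n\to\infty$ using the asymptotics of Definition~\ref{def:Perron-num-syst}: by~\eqref{eq:conv-beta-U} we have $U_\beta(m)/\beta^m\to c_1$ with $c_1>0$. The elementary building block is that, for any fixed integers $a,b$,
$$\frac{U_\beta(a+n)}{U_\beta(b+n)}=\frac{U_\beta(a+n)/\beta^{a+n}}{U_\beta(b+n)/\beta^{b+n}}\,\beta^{a-b}\longrightarrow \beta^{a-b}\qquad(n\to\infty),$$
since both factors in the first quotient converge to $c_1\neq 0$.

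For part (a), write $u=u_{|u|-1}\cdots u_0$ and $v=v_{|v|-1}\cdots v_0$. The trailing zeroes contribute nothing to the value, so $\val_{U_\beta}(u0^{p+n})=\sum_{i=0}^{|u|-1}u_i\,U_\beta(p+n+i)$ and likewise for $v$. Dividing by $U_\beta(|u|+p+n)$ and applying the building block termwise — which is legitimate because these are finite sums with a fixed number of terms — the two coordinates converge to $\sum_{i=0}^{|v|-1}v_i\beta^{i-|u|}$ and $\sum_{i=0}^{|u|-1}u_i\beta^{i-|u|}$. By the convention of Definition~\ref{def:beta-exp}, these are exactly $0.0^{|u|-|v|}v$ and $0.u$, which establishes (a).

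For part (b) the same unfolding produces one extra contribution coming from the prefix $d_n=s_1\cdots s_n$ of $d_\beta^*(1)=s_1s_2\cdots$, namely
$$\val_{U_\beta}(u0^p d_n)=\sum_{i=0}^{|u|-1}u_i\,U_\beta(n+p+i)+\sum_{k=1}^{n}s_k\,U_\beta(n-k),$$
and analogously for $v$. The first sum is treated exactly as in (a) and tends to $0.u$. The genuinely new — and main — difficulty is the second sum: its number of terms grows with $n$, so the termwise building block no longer applies and one cannot interchange limit and summation carelessly. I would resolve this by observing that $\sum_{k=1}^{n}s_k\,U_\beta(n-k)=\val_{U_\beta}(d_n)$, and that $d_n$ is nothing but the normal $U_\beta$-representation of $U_\beta(n)-1$, the largest integer with an $n$-digit representation; this is a standard feature of Bertrand numeration systems~\cite{BR}. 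Hence $\sum_{k=1}^{n}s_k\,U_\beta(n-k)=U_\beta(n)-1$, which collapses the offending sum into the single ratio $\bigl(U_\beta(n)-1\bigr)/U_\beta(|u|+p+n)$, tending to $\beta^{-(|u|+p)}$ by the building block. Thus the $y$-coordinate converges to $0.u+\beta^{-(|u|+p)}$ and, identically, the $x$-coordinate to $0.0^{|u|-|v|}v+\beta^{-(|u|+p)}$.

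It then remains to recognize these limits as the asserted values, which follows from the convention of Definition~\ref{def:beta-exp} applied to the (finite-then-infinite) words $u0^pd_\beta^*(1)$ and $0^{|u|-|v|}v0^pd_\beta^*(1)$: the finite head $u$ (resp.\ $0^{|u|-|v|}v$) contributes $0.u$ (resp.\ $0.0^{|u|-|v|}v$), while the tail $d_\beta^*(1)$, pushed past the $|u|+p$ leading digits, contributes $\beta^{-(|u|+p)}\,0.d_\beta^*(1)=\beta^{-(|u|+p)}$, using that $d_\beta^*(1)$ is a $\beta$-representation of $1$. Should one wish to avoid the representation identity $\val_{U_\beta}(d_n)=U_\beta(n)-1$, the second sum can be controlled directly from Definition~\ref{def:Perron-num-syst}: writing $U_\beta(m)=c_1\beta^m+O(\rho^m)$ with $\rho:=\max_{j>1}|\alpha_j|<\beta$, the leading term yields $c_1\beta^n\sum_{k=1}^n s_k\beta^{-k}\sim c_1\beta^n$ (again because $0.d_\beta^*(1)=1$) and the remainder is $O(\rho^n)=o(\beta^n)$, giving the same limit $\beta^{-(|u|+p)}$.
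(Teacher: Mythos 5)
Your proof is correct. For part (a) it coincides exactly with the paper's argument: unfold $\val_{U_\beta}$, divide by $U_\beta(|u|+p+n)$, and pass to the limit termwise in a sum with a fixed number of terms using~\eqref{eq:conv-beta-U}. For part (b), however, the paper merely declares the proof ``similar'' and writes nothing, whereas you correctly identify that it is \emph{not} a verbatim repetition: the contribution $\sum_{k=1}^{n}s_k\,U_\beta(n-k)$ of the prefix $d_n$ has a number of terms growing with $n$, so the termwise limit argument of part (a) does not apply to it. Your resolution --- recognizing this sum as $\val_{U_\beta}(d_n)=U_\beta(n)-1$, since in a Bertrand system the prefixes of $d_\beta^*(1)$ are the maximal words of each length in $0^*\Lang$ (a fact the paper itself invokes in the proof of Proposition~\ref{pro:nbr-mots-0}) --- collapses the problematic sum into a single ratio tending to $\beta^{-|u|-p}$, and the identification of $0.u+\beta^{-|u|-p}$ with $0.u0^{p}d_\beta^*(1)$ via $0.d_\beta^*(1)=1$ is exactly right (and consistent with the formula for $B_{u,v}$ in Definition~\ref{def:seg}). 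Your fallback estimate using $U_\beta(m)=c_1\beta^m+\sum_{j>1}c_j\alpha_j^m$ also works, though the error term there is $O(n\max(1,\rho)^n)$ rather than $O(\rho^n)$ when some conjugate has modulus at most $1$; this is still $o(\beta^n)$, so nothing breaks. The only cosmetic omission is the trivial case $u=v=\varepsilon$, which the paper dispatches in one line and which your formulas cover anyway with empty sums. In short: part (a) matches the paper, and part (b) supplies a genuine argument where the paper has only an appeal to similarity.
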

\begin{proof}
Let $(u,v) \in \Lang \times \Lang$ satisfying $(\star)$ and set $p:=p(u,v)$.
We prove the first item as the proof of the second one is similar. 
The result is trivial if $u=v=\varepsilon$.
Suppose that $u$ and $v$ are non-empty words.
Let us write $u=u_{|u|-1} u_{|u|-2} \cdots u_{0}$ where $u_i\in \Alp$ for all $i$. 
By definition, we have
$$
\frac{\val_{U_\beta}(u0^{p+n})}{U_\beta(|u|+p+n)} = \sum_{i=0}^{|u|-1} u_i \, \frac{U_\beta(i+p+n)}{U_\beta(|u|+p+n)}.
$$
Using~\eqref{eq:conv-beta-U}, $U_\beta(i+p+n) / U_\beta(|u|+p+n)$ tends to $ \beta^i / \beta^{|u|}$ when $n$ tend to infinity. 
Consequently, 
$$
\lim\limits_{n \rightarrow +\infty} \frac{\val_{U_\beta}(u0^{p+n})}{U_\beta(|u|+p+n)} = \sum_{i=0}^{|u|-1} u_i \beta^{i-|u|} = 0.u.
$$
Using the same reasoning on the word $v$, we conclude that the sequence 
$$
\left(\left( \frac{\val_{U_\beta}(v0^{p(u,v)+n})}{U_\beta(|u|+p(u,v)+n)},\frac{\val_{U_\beta}(u0^{p(u,v)+n})}{U_\beta(|u|+p(u,v)+n)}\right) \right)_{n\ge 0}
$$
converges to the pair of real numbers $( 0.0^{|u|-|v|} v , 0.u )$.
\end{proof}

\begin{remark}\label{rem:conv-diag}
Let $(u,v) \in \Lang \times \Lang$ satisfying $(\star)$ and set $p:=p(u,v)$.
Suppose that $u$ and $v$ are non-empty (the case when $u=v=\varepsilon$ is similar: in the following, replace $0^*\Lang$ by $\Lang$ where needed).
Using Corollary~\ref{cor:completion}, the pair of words $(u0^{p}w, v0^{p}w)$ has an odd binomial coefficient for any word $w\in 0^*\Lang$.
In particular, the pair of words $(u0^pw, v0^pw)$ corresponds to a square region in $\U_{|u|+p+n}$ for all $w\in 0^*\Lang$ such that $|w|=n\ge 0$. 
Using Remark~\ref{rem:pixel}, this region is
$$\left( \frac{\val_{U_\beta}(v0^{p}w)}{U_\beta(|u|+p+n)},\frac{\val_{U_\beta}(u0^{p}w)}{U_\beta(|u|+p+n)}\right) + \frac{Q}{U_\beta(|u|+p+n)} \subset \U_{|u|+p+n}.$$
Using Lemma~\ref{lem:origine-extremite}, when $w=0^n$ (the smallest word of length $n$ in $0^*\Lang$), the sequence 
$$
\left(\left( \frac{\val_{U_\beta}(v0^{p+n})}{U_\beta(|u|+p+n)},\frac{\val_{U_\beta}(u0^{p+n})}{U_\beta(|u|+p+n)}\right) \right)_{n\ge 0}
$$
converges to the pair of real numbers $( 0.0^{|u|-|v|} v , 0.u )$.
This point will be the first endpoint of a segment associated with $u$ and $v$. See Definition~\ref{def:seg}.
Analogously, using Lemma~\ref{lem:origine-extremite}, when $w=d_n$ is the prefix of length $n$ of $d_\beta^*(1)$ (the greatest word of length $n$ in $0^*\Lang$), then the sequence  
$$
\left(\left( \frac{\val_{U_\beta}(v0^p d_n)}{U_\beta(|u|+p+n)},\frac{\val_{U_\beta}(u0^p d_n)}{U_\beta(|u|+p+n)}\right)\right)_{n\ge 0}
$$
converges to the pair of real numbers $(0.0^{|u|-|v|} v 0^p d_\beta^*(1), 0.u 0^p d_\beta^*(1))$. This point will be the second endpoint of the same segment associated with $u$ and $v$. See Definition~\ref{def:seg}.
As a consequence, the sequence of sets whose $n$th term is defined by
\begin{align}
\bigcup_{\substack{|w| = n \\ w \in 0^*\Lang}} \left( \left( \frac{\val_{U_\beta}(v0^{p}w)}{U_\beta(|u|+p+n)},\frac{\val_{U_\beta}(u0^{p}w)}{U_\beta(|u|+p+n)}\right) + \frac{Q}{U_\beta(|u|+p+n)} \right) \label{eq:conv-diag}
\end{align}
converges, for the Hausdorff distance, to the diagonal of the square $(0.0^{|u|-|v|}v, 0.u) + Q/\beta^{|u|+p}$.
\end{remark}

\begin{example}
As a first example, when $\beta=2$, we find back the construction in~\cite{LRS1}.
As a second example, let us take $\beta=\varphi$ to be the golden ratio. 
Let $u=101$ and $v=10$ (resp., $u'=100=v'$). 
Then $p(u,v)=1$ (resp., $p(u',v')=0$); see Example~\ref{ex:p-Fib}.
Those pairs of words satisfy $(\star)$.  
The first few terms of the sequence of sets~\eqref{eq:conv-diag} are respectively depicted in Figure~\ref{fig:conv-diag1} and Figure~\ref{fig:conv-diag2}.
Observe that when $n$ tends to infinity, the union of black squares in $\mathcal{U}^\varphi_{n+4}$ (resp., $\mathcal{U}^\varphi_{n+3}$) converges to the diagonal of $(0.0v, 0.u) + Q/\varphi^{4}$ (resp., $(0.v', 0.u') + Q/\varphi^{3}$).
\begin{figure}[h!tbp]
\begin{subfigure}[b]{0.3\textwidth}
\begin{center}
\begin{tikzpicture}
\draw[fill=black] (0,0) -- (4,0) -- (4,4) -- (0,4) -- cycle;

\draw (-0.5,2.3) node[anchor=north west] {$u$};
\draw (1.7,4.5) node[anchor=north west] {$v$};
\draw (1.7,-0.5) node[anchor=north west] {$\mathcal{U}^\varphi_3$};
\end{tikzpicture}
\end{center}
\caption{A subset of $\mathcal{U}^\varphi_3$.}
\end{subfigure}
\begin{subfigure}[b]{0.3\textwidth}
\begin{center}
\begin{tikzpicture}
\draw[fill=black] (0,2) -- (2,2) -- (2,4) -- (0,4) -- cycle;

\draw (-0.7,3.3) node[anchor=north west] {$u0$};
\draw (0.7,4.5) node[anchor=north west] {$v0$};
\draw (0.7,-0.5) node[anchor=north west] {$\mathcal{U}^\varphi_4$};
\end{tikzpicture}
\end{center}
\caption{The element $n=0$ of~\eqref{eq:conv-diag}.}
\end{subfigure}
\begin{subfigure}[b]{0.3\textwidth}
\begin{center}
\begin{tikzpicture}
\draw[fill=black] (0,3) -- (1,3) -- (1,4) -- (0,4) -- cycle;

\draw[fill=black] (1,2) -- (2,2) -- (2,3) -- (1,3) -- cycle;

\draw[fill=white] (0,2) -- (1,2) -- (1,3) -- (0,3) -- cycle;

\draw[fill=white] (1,3) -- (2,3) -- (2,4) -- (1,4) -- cycle;

\draw (-0.8,3.7) node[anchor=north west] {$u00$};
\draw (-0.8,2.8) node[anchor=north west] {$u01$};
\draw (0,4.5) node[anchor=north west] {$v00$};
\draw (1,4.5) node[anchor=north west] {$v01$};
\draw (0.7,-0.5) node[anchor=north west] {$\mathcal{U}^\varphi_5$};
\end{tikzpicture}
\end{center}
\caption{The element $n=1$ of~\eqref{eq:conv-diag}.}
\end{subfigure} 

\begin{subfigure}[b]{0.3\textwidth}
\begin{center}
\begin{tikzpicture}
\draw[fill=black] (0,3.33) -- (0.66,3.33) -- (0.66,4) -- (0,4) -- cycle;
\draw[fill=white] (0.66,3.33) -- (1.33,3.33) -- (1.33,4) -- (0.66,4) -- cycle;
\draw[fill=white] (1.33,3.33) -- (2,3.33) -- (2,4) -- (1.33,4) -- cycle;

\draw[fill=white] (0,2.66) -- (0.66,2.66) -- (0.66,3.33) -- (0,3.33) -- cycle;
\draw[fill=black] (0.66,2.66) -- (1.33,2.66) -- (1.33,3.33) -- (0.66,3.33) -- cycle;
\draw[fill=white] (1.33,2.66) -- (2,2.66) -- (2,3.33) -- (1.33,3.33) -- cycle;

\draw[fill=white] (0,2) -- (0.66,2) -- (0.66,2.66) -- (0,2.66) -- cycle;
\draw[fill=white] (0.66,2) -- (1.33,2) -- (1.33,2.66) -- (0.66,2.66) -- cycle;
\draw[fill=black] (1.33,2) -- (2,2) -- (2,2.66) -- (1.33,2.66) -- cycle;

\draw (-1.1,3.9) node[anchor=north west] {$u000$};
\draw (-1.1,3.2) node[anchor=north west] {$u001$};
\draw (-1.1,2.6) node[anchor=north west] {$u010$};
\draw (-0.3,4.5) node[anchor=north west] {$v000$};
\draw (0.5,4.5) node[anchor=north west] {$v001$};
\draw (1.3,4.5) node[anchor=north west] {$v010$};
\draw (0.7,-0.5) node[anchor=north west] {$\mathcal{U}^\varphi_6$};
\end{tikzpicture}
\end{center}
\caption{The element $n=2$ of~\eqref{eq:conv-diag}.}
\end{subfigure}
\begin{subfigure}[b]{0.3\textwidth}
\begin{center}
\begin{tikzpicture}
\draw[fill=black] (0,3.6) -- (0.4,3.6) -- (0.4,4) -- (0,4) -- cycle;
\draw[fill=white] (0.4,3.6) -- (0.8,3.6) -- (0.8,4) -- (0.4,4) -- cycle;
\draw[fill=white] (0.8,3.6) -- (1.2,3.6) -- (1.2,4) -- (0.8,4) -- cycle;
\draw[fill=white] (1.2,3.6) -- (1.6,3.6) -- (1.6,4) -- (1.2,4) -- cycle;
\draw[fill=white] (1.6,3.6) -- (2,3.6) -- (2,4) -- (1.6,4) -- cycle;

\draw[fill=white] (0,3.2) -- (0.4,3.2) -- (0.4,3.6) -- (0,3.6) -- cycle;
\draw[fill=black] (0.4,3.2) -- (0.8,3.2) -- (0.8,3.6) -- (0.4,3.6) -- cycle;
\draw[fill=white] (0.8,3.2) -- (1.2,3.2) -- (1.2,3.6) -- (0.8,3.6) -- cycle;
\draw[fill=white] (1.2,3.2) -- (1.6,3.2) -- (1.6,3.6) -- (1.2,3.6) -- cycle;
\draw[fill=white] (1.6,3.2) -- (2,3.2) -- (2,3.6) -- (1.6,3.6) -- cycle;

\draw[fill=white] (0,2.8) -- (0.4,2.8) -- (0.4,3.2) -- (0,3.2) -- cycle;
\draw[fill=white] (0.4,2.8) -- (0.8,2.8) -- (0.8,3.2) -- (0.4,3.2) -- cycle;
\draw[fill=black] (0.8,2.8) -- (1.2,2.8) -- (1.2,3.2) -- (0.8,3.2) -- cycle;
\draw[fill=white] (1.2,2.8) -- (1.6,2.8) -- (1.6,3.2) -- (1.2,3.2) -- cycle;
\draw[fill=white] (1.6,2.8) -- (2,2.8) -- (2,3.2) -- (1.6,3.2) -- cycle;

\draw[fill=white] (0,2.4) -- (0.4,2.4) -- (0.4,2.8) -- (0,2.8) -- cycle;
\draw[fill=white] (0.4,2.4) -- (0.8,2.4) -- (0.8,2.8) -- (0.4,2.8) -- cycle;
\draw[fill=white] (0.8,2.4) -- (1.2,2.4) -- (1.2,2.8) -- (0.8,2.8) -- cycle;
\draw[fill=black] (1.2,2.4) -- (1.6,2.4) -- (1.6,2.8) -- (1.2,2.8) -- cycle;
\draw[fill=white] (1.6,2.4) -- (2,2.4) -- (2,2.8) -- (1.6,2.8) -- cycle;

\draw[fill=white] (0,2) -- (0.4,2) -- (0.4,2.4) -- (0,2.4) -- cycle;
\draw[fill=white] (0.4,2) -- (0.8,2) -- (0.8,2.4) -- (0.4,2.4) -- cycle;
\draw[fill=white] (0.8,2) -- (1.2,2) -- (1.2,2.4) -- (0.8,2.4) -- cycle;
\draw[fill=white] (1.2,2) -- (1.6,2) -- (1.6,2.4) -- (1.2,2.4) -- cycle;
\draw[fill=black] (1.6,2) -- (2,2) -- (2,2.4) -- (1.6,2.4) -- cycle;

\draw (-1.1,4) node[anchor=north west] {$u0000$};
\draw (-1.1,3.6) node[anchor=north west] {$u0001$};
\draw (-1.1,3.2) node[anchor=north west] {$u0010$};
\draw (-1.1,2.8) node[anchor=north west] {$u0100$};
\draw (-1.1,2.4) node[anchor=north west] {$u0101$};
\draw (-0.3,4.2) node[anchor=north west, rotate=40] {$v0000$};
\draw (0.2,4.2) node[anchor=north west, rotate=40] {$v0001$};
\draw (0.6,4.2) node[anchor=north west, rotate=40] {$v0010$};
\draw (1,4.2) node[anchor=north west, rotate=40] {$v0100$};
\draw (1.4,4.2) node[anchor=north west, rotate=40] {$v0101$};

\draw (0.7,-0.5) node[anchor=north west] {$\mathcal{U}^\varphi_7$};
\end{tikzpicture}
\end{center}
\caption{The element $n=3$ of~\eqref{eq:conv-diag}.}
\end{subfigure}
\caption{The first few terms of sequence of sets~\eqref{eq:conv-diag} converging to the diagonal of the square $(0.0v, 0.u) + Q/\varphi^{4}$ for $u=101$ and $v=10$.}
    \label{fig:conv-diag1}
\end{figure}

\begin{figure}[h!tbp]
\begin{subfigure}[b]{0.5\textwidth}
\begin{center}
\begin{tikzpicture}
\draw[fill=black] (0,0) -- (4,0) -- (4,4) -- (0,4) -- cycle;

\draw (-0.5,2.3) node[anchor=north west] {$u'$};
\draw (1.7,4.5) node[anchor=north west] {$v'$};
\draw (1.7,-0.5) node[anchor=north west] {$\mathcal{U}^\varphi_3$};
\end{tikzpicture}
\end{center}
\caption{The element $n=0$ of~\eqref{eq:conv-diag}.}
\end{subfigure}
\begin{subfigure}[b]{0.5\textwidth}
\begin{center}
\begin{tikzpicture}
\draw[fill=black] (0,2) -- (2,2) -- (2,4) -- (0,4) -- cycle;
\draw[fill=white] (2,2) -- (4,2) -- (4,4) -- (2,4) -- cycle;
\draw[fill=white] (0,0) -- (2,0) -- (2,2) -- (0,2) -- cycle;
\draw[fill=black] (2,0) -- (4,0) -- (4,2) -- (2,2) -- cycle;

\draw (-0.7,3.3) node[anchor=north west] {$u'0$};
\draw (-0.7,1.3) node[anchor=north west] {$u'1$};
\draw (0.7,4.5) node[anchor=north west] {$v'0$};
\draw (2.7,4.5) node[anchor=north west] {$v'1$};
\draw (1.8,-0.5) node[anchor=north west] {$\mathcal{U}^\varphi_4$};
\end{tikzpicture}
\end{center}
\caption{The element $n=1$ of~\eqref{eq:conv-diag}.}
\end{subfigure}
\begin{subfigure}[b]{0.5\textwidth}
\begin{center}
\begin{tikzpicture}
\draw[fill=black] (0,2.66) -- (1.33,2.66) -- (1.33,4) -- (0,4) -- cycle;
\draw[fill=white] (1.33,2.66) -- (2.66,2.66) -- (2.66,4) -- (1.33,4) -- cycle;
\draw[fill=white] (2.66,2.66) -- (4,2.66) -- (4,4) -- (2.66,4) -- cycle;

\draw[fill=white] (0,1.33) -- (1.33,1.33) -- (1.33,2.66) -- (0,2.66) -- cycle;
\draw[fill=black] (1.33,1.33) -- (2.66,1.33) -- (2.66,2.66) -- (1.33,2.66) -- cycle;
\draw[fill=white] (2.66,1.33) -- (4,1.33) -- (4,2.66) -- (2.66,2.66) -- cycle;

\draw[fill=white] (0,0) -- (1.33,0) -- (1.33,1.33) -- (0,1.33) -- cycle;
\draw[fill=white] (1.33,0) -- (2.66,0) -- (2.66,1.33) -- (1.33,1.33) -- cycle;
\draw[fill=black] (2.66,0) -- (4,0) -- (4,1.33) -- (2.66,1.33) -- cycle;

\draw (-0.8,3.7) node[anchor=north west] {$u'00$};
\draw (-0.8,2.3) node[anchor=north west] {$u'01$};
\draw (-0.8,0.9) node[anchor=north west] {$u'10$};
\draw (0,4.5) node[anchor=north west] {$v'00$};
\draw (1.5,4.5) node[anchor=north west] {$v'01$};
\draw (3,4.5) node[anchor=north west] {$v'10$};
\draw (1.8,-0.5) node[anchor=north west] {$\mathcal{U}^\varphi_5$};
\end{tikzpicture}
\end{center}
\caption{The element $n=2$ of~\eqref{eq:conv-diag}.}
\end{subfigure} 
\begin{subfigure}[b]{0.5\textwidth}
\begin{center}
\begin{tikzpicture}
\draw[fill=black] (0,3.2) -- (0.8,3.2) -- (0.8,4) -- (0,4) -- cycle;
\draw[fill=white] (0.8,3.2) -- (1.6,3.2) -- (1.6,4) -- (0.8,4) -- cycle;
\draw[fill=white] (1.6,3.2) -- (2.4,3.2) -- (2.4,4) -- (1.6,4) -- cycle;
\draw[fill=white] (2.4,3.2) -- (3.2,3.2) -- (3.2,4) -- (2.4,4) -- cycle;
\draw[fill=white] (3.2,3.2) -- (4,3.2) -- (4,4) -- (3.2,4) -- cycle;

\draw[fill=white] (0,2.4) -- (0.8,2.4) -- (0.8,3.2) -- (0,3.2) -- cycle;
\draw[fill=black] (0.8,2.4) -- (1.6,2.4) -- (1.6,3.2) -- (0.8,3.2) -- cycle;
\draw[fill=white] (1.6,2.4) -- (2.4,2.4) -- (2.4,3.2) -- (1.6,3.2) -- cycle;
\draw[fill=white] (2.4,2.4) -- (3.2,2.4) -- (3.2,3.2) -- (2.4,3.2) -- cycle;
\draw[fill=white] (3.2,2.4) -- (4,2.4) -- (4,3.2) -- (3.2,3.2) -- cycle;

\draw[fill=white] (0,1.6) -- (0.8,1.6) -- (0.8,2.4) -- (0,2.4) -- cycle;
\draw[fill=white] (0.8,1.6) -- (1.6,1.6) -- (1.6,2.4) -- (0.8,2.4) -- cycle;
\draw[fill=black] (1.6,1.6) -- (2.4,1.6) -- (2.4,2.4) -- (1.6,2.4) -- cycle;
\draw[fill=white] (2.4,1.6) -- (3.2,1.6) -- (3.2,2.4) -- (2.4,2.4) -- cycle;
\draw[fill=white] (3.2,1.6) -- (4,1.6) -- (4,2.4) -- (3.2,2.4) -- cycle;

\draw[fill=white] (0,0.8) -- (0.8,0.8) -- (0.8,1.6) -- (0,1.6) -- cycle;
\draw[fill=white] (0.8,0.8) -- (1.6,0.8) -- (1.6,1.6) -- (0.8,1.6) -- cycle;
\draw[fill=white] (1.6,0.8) -- (2.4,0.8) -- (2.4,1.6) -- (1.6,1.6) -- cycle;
\draw[fill=black] (2.4,0.8) -- (3.2,0.8) -- (3.2,1.6) -- (2.4,1.6) -- cycle;
\draw[fill=white] (3.2,0.8) -- (4,0.8) -- (4,1.6) -- (3.2,1.6) -- cycle;

\draw[fill=white] (0,0) -- (0.8,0) -- (0.8,0.8) -- (0,0.8) -- cycle;
\draw[fill=white] (0.8,0) -- (1.6,0) -- (1.6,0.8) -- (0.8,0.8) -- cycle;
\draw[fill=white] (1.6,0) -- (2.4,0) -- (2.4,0.8) -- (1.6,0.8) -- cycle;
\draw[fill=white] (2.4,0) -- (3.2,0) -- (3.2,0.8) -- (2.4,0.8) -- cycle;
\draw[fill=black] (3.2,0) -- (4,0) -- (4,0.8) -- (3.2,0.8) -- cycle;

\draw (-1.1,3.9) node[anchor=north west] {$u'000$};
\draw (-1.1,3.1) node[anchor=north west] {$u'001$};
\draw (-1.1,2.3) node[anchor=north west] {$u'010$};
\draw (-1.1,1.5) node[anchor=north west] {$u'100$};
\draw (-1.1,0.7) node[anchor=north west] {$u'101$};

\draw (-0,4.3) node[anchor=north west,rotate=40] {$v'000$};
\draw (0.8,4.3) node[anchor=north west,rotate=40] {$v'001$};
\draw (1.6,4.3) node[anchor=north west,rotate=40] {$v'010$};
\draw (2.4,4.3) node[anchor=north west,rotate=40] {$v'100$};
\draw (3.2,4.3) node[anchor=north west,rotate=40] {$v'101$};

\draw (1.8,-0.5) node[anchor=north west,rotate=40] {$\mathcal{U}^\varphi_6$};
\end{tikzpicture}
\end{center}
\caption{The element $n=3$ of~\eqref{eq:conv-diag}.}
\end{subfigure}
\caption{The first few terms of sequence of sets~\eqref{eq:conv-diag} converging to the diagonal of the square $(0.v', 0.u') + Q/\varphi^{3}$ for $u'=100$ and $v'=100$.}
    \label{fig:conv-diag2}
\end{figure}
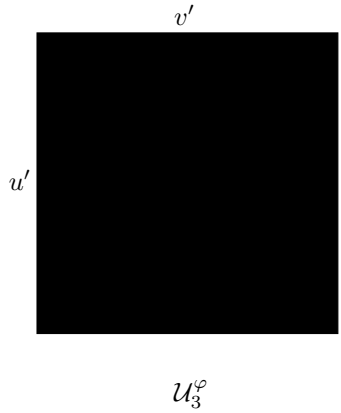
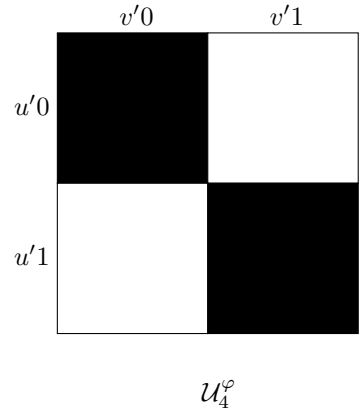
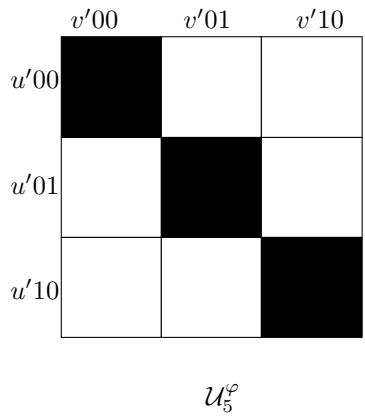
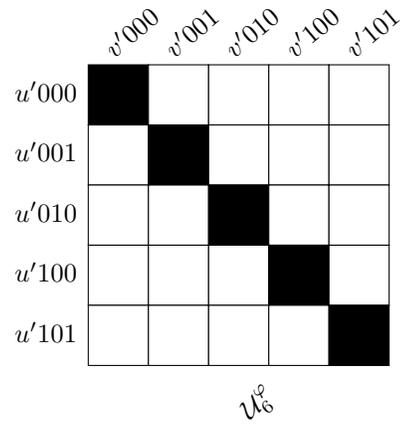
\end{example}


\section{The sequence of compact sets $(\AEns_n)_{n\ge 0}$}\label{sec:AEns}
The observation made in Remark~\ref{rem:conv-diag} leads to the definition of an initial set $\AEns_0$.
The same technique is applied in~\cite{LRS1}.
At first, let us define a segment associated with a pair of words. 

\begin{definition}\label{def:seg}
Let $(u,v)$ in $\Lang\times \Lang$ such that $|u|\ge|v|\ge 0$. 
We define a closed segment $S_{u,v}$ of slope $1$ and of length $\sqrt{2}\cdot \beta^{-|u|-p(u,v)}$ in $[0,1]\times [0,1]$. 
The endpoints of $S_{u,v}$ are given by 
$A_{u,v}:=(0.0^{|u|-|v|}v,0.u)$ and 
$$
B_{u,v}:= A_{u,v} + (\beta^{-|u|-p(u,v)},\beta^{-|u|-p(u,v)})
=(0.0^{|u|-|v|}v 0^{p(u,v)} d_\beta^*(1),0.u 0^{p(u,v)} d_\beta^*(1)).
$$
Observe that, if $u=v=\varepsilon$, the associated segment of slope $1$ has endpoints $(0,0)$ and $(1,1)$. Otherwise, the segment $S_{u,v}$ lies in $[0,1]\times [1/\beta,1]$.
\end{definition}

\begin{definition}\label{def:A0}
Let us define the following compact set which is the closure of a countable union of segments
$$\AEns_0:=\overline{\bigcup_{\substack{(u,v)\\ \text{satisfying} (\star)}} S_{u,v}}.$$
Notice that Definition~\ref{def:seg} implies that $\AEns_0 \subset [0,1]\times[0,1]$. More precisely, $\AEns_0\setminus S_{\varepsilon,\varepsilon} \subset [0,1]\times[1/\beta,1]$.
Furthermore, observe that we take the closure of a union to ensure the compactness of the set.
\end{definition}

\begin{example}
Let $\beta=\varphi$ be the golden ratio. 
In Figure~\ref{fig:A0}, the segment $S_{u,v}$ is represented for all $(u,v)$ satisfying $(\star)$ and such that $0 \le |v| \le |u| \le 10$. 
    \begin{figure}[h!tbp]
        \centering
        \includegraphics[scale=0.65]{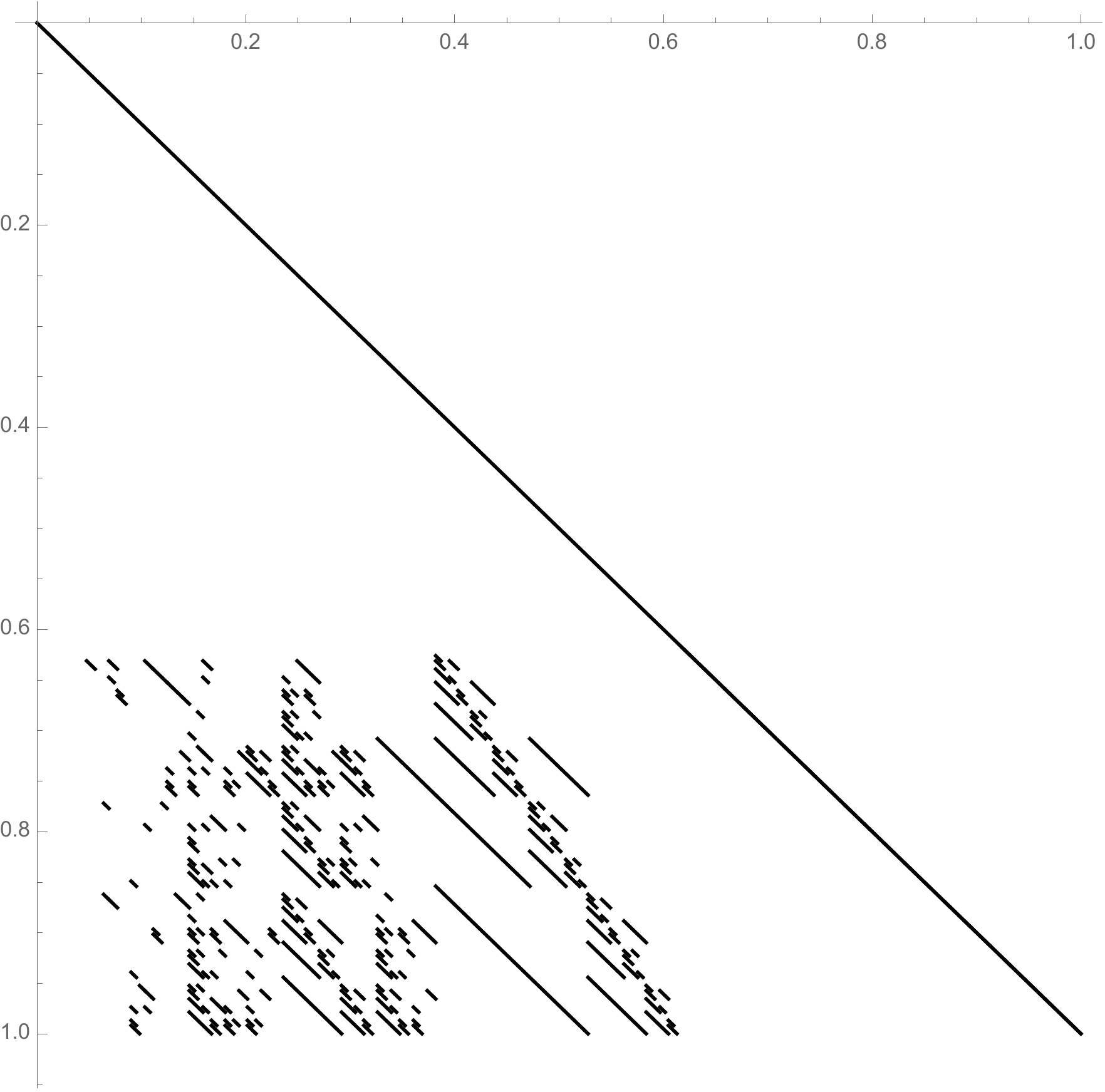}
        \caption{An approximation of $\mathcal{A}^\varphi_0$ computed with words of length $\le 10$.}
        \label{fig:A0}
    \end{figure}
\end{example}

In the following definition, we introduce another sequence of compact sets obtained by transforming the initial set $\AEns_0$ under iterations of two maps. 
This new sequence, which is shown to be a Cauchy sequence in Proposition~\ref{prop:cauchy_seq}, allows us to define properly the limit set $\Llim$. 

\begin{definition}\label{def:An}
We let $c$ denote the homothety of center $(0,0)$ and ratio $1/\beta$ and we consider the map $h:(x,y)\mapsto (x,\beta  y)$. 
We define a sequence of compact sets by setting, for all $n\ge 0$,
$$\AEns_n:=\bigcup_{\substack{0\le i\le n\\ 0\le j\le i}} h^j(c^i(\AEns_0)).$$
\end{definition}

In Figure~\ref{fig:hc}, we apply $c$ and $h$ at most twice from $\AEns_0\setminus S_{\varepsilon,\varepsilon}$. 
Let $m,n$ with $m\leq n$. 
Using Figure~\ref{fig:hc}, observe that
\begin{equation}
    \label{eq:stabilisation}
    \AEns_m\cap ([1/\beta^{m+1},1]\times [0,1])=\AEns_n\cap ([1/\beta^{m+1},1]\times [0,1]).
\end{equation}
\begin{figure}
\vspace{-2cm}
\centering
\begin{tikzpicture}
\tikzstyle{every node}=[shape=circle,fill=none,draw=none,minimum size=10pt,inner sep=2pt]
\node(a0) at (-0.1,0.3) {$0$};
\node(a1) at (9.9,0.3) {$1$};
\node(a2) at (-0.2,-9.9) {$1$};

\draw[fill=white] (0,-6.18) -- (10,-6.18) -- (10,-10) --(0,-10) -- cycle;

\node(a2) at (9.2,-6.5) {$\AEns_0\setminus S_{\varepsilon,\varepsilon}$};

\draw[color=red, fill=red, fill opacity=0.2] (0,-3.81) -- (6.18,-3.81) -- (6.18,-10) --(0,-10) -- cycle;

\node(a3) at (5.2,-4.1) {$c(\AEns_0\setminus S_{\varepsilon,\varepsilon})$};
\node(a4) at (5,-6.5) {$h(c(\AEns_0\setminus S_{\varepsilon,\varepsilon}))$};

\draw[color=blue, fill=blue, fill opacity=0.2] (0,-2.36) -- (3.81,-2.36) -- (3.81,-10) --(0,-10) -- cycle;

\node(a5) at (2.8,-2.7) {$c^2(\AEns_0\setminus S_{\varepsilon,\varepsilon})$};
\node(a6) at (2.5,-4.1) {$h(c^2(\AEns_0\setminus S_{\varepsilon,\varepsilon}))$};
\node(a7) at (2.5,-6.5) {$h^2(c(\AEns_0\setminus S_{\varepsilon,\varepsilon}))$};

\draw [->] (0,0) to [] node [] {}  (10.1,0);
\draw [->] (0,0) to [] node [] {}  (0,-10.1);

\draw [->] (8,-6.5) to [bend right] node [] {}  (5.8,-5);
\node(a8) at (7.3,-5.3) {$c$};

\draw [->] (5,-3.9) to [bend right] node [] {}  (3.5,-3.2);
\node(a9) at (4.5,-3.2) {$c$};

\draw [->] (0.5,-3) to [] node [] {}  (0.5,-8);
\node(a10) at (0.7,-5) {$h$};

\node(a11) at (-0.2, -6.18) {$\frac{1}{\beta}$};
\node(a12) at (-0.2,-3.81) {$\frac{1}{\beta^2}$};
\node(a13) at (-0.2,-2.36) {$\frac{1}{\beta^3}$};
\node(a14) at (6.18,0.3) {$\frac{1}{\beta}$};
\node(a15) at (3.81,0.3) {$\frac{1}{\beta^2}$};
;
\end{tikzpicture}
\caption{Two applications of $c$ and $h$ from $\AEns_0\setminus S_{\varepsilon,\varepsilon}$.}
\label{fig:hc}
\end{figure}

\begin{prop}\label{prop:cauchy_seq}
The sequence $(\AEns_n)_{n\ge 0}$ is a Cauchy sequence.
\end{prop}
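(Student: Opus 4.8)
The plan is to exploit three simple structural facts about the sequence: it is nested, it is uniformly contained in $Q=[0,1]\times[0,1]$, and by~\eqref{eq:stabilisation} it stabilizes on every vertical strip bounded away from the $y$-axis. First I would observe that, from Definition~\ref{def:An}, the index set $\{(i,j):0\le i\le m,\ 0\le j\le i\}$ defining $\AEns_m$ is contained in the one defining $\AEns_n$ whenever $m\le n$, so $\AEns_m\subseteq\AEns_n$. Since the Hausdorff distance between nested non-empty compact sets reduces to a one-sided supremum, it then suffices to estimate
$$
d_h(\AEns_m,\AEns_n)=\sup_{z\in\AEns_n} d(z,\AEns_m)\qquad(m\le n).
$$
I would also record that every generating piece $h^j(c^i(\AEns_0))$ lies in $[0,\beta^{-i}]\times[0,\beta^{-(i-j)}]\subseteq Q$, because $\AEns_0\subseteq Q$, $c^i$ is a homothety of ratio $\beta^{-i}$, $h^j$ multiplies ordinates by $\beta^{j}$, and $0\le j\le i$; hence $\AEns_n\subseteq Q$ for all $n$.

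The estimate splits according to the abscissa of $z=(x,y)\in\AEns_n$. If $x\ge\beta^{-(m+1)}$, then $z$ lies in $[\beta^{-(m+1)},1]\times[0,1]$, so~\eqref{eq:stabilisation} forces $z\in\AEns_m$ and $d(z,\AEns_m)=0$. The real content is the complementary strip $x<\beta^{-(m+1)}$, and the key geometric ingredient is that $\AEns_m$ already reaches full height there. Indeed, the pair $(\varepsilon,\varepsilon)$ satisfies $(\star)$, so by Definition~\ref{def:seg} the diagonal segment $S_{\varepsilon,\varepsilon}=\{(t,t):t\in[0,1]\}$ lies in $\AEns_0$; taking $i=j=m$ in Definition~\ref{def:An} gives $h^m(c^m(\AEns_0))\subseteq\AEns_m$. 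As $h^m\circ c^m$ acts by $(x,y)\mapsto(\beta^{-m}x,y)$, the set $\AEns_m$ contains the steep segment
$$
\Sigma_m:=\{(\beta^{-m}t,t):t\in[0,1]\},
$$
running from $(0,0)$ to $(\beta^{-m},1)$ and thus attaining every height $y\in[0,1]$ at abscissa at most $\beta^{-m}$.

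Putting these together, for $z=(x,y)$ with $x<\beta^{-(m+1)}$ I would compare $z$ with the point $(\beta^{-m}y,y)\in\Sigma_m\subseteq\AEns_m$ sharing its ordinate; since both abscissae lie in $[0,\beta^{-m}]$, this yields $d(z,\AEns_m)\le|x-\beta^{-m}y|\le\beta^{-m}$. Combining the two cases gives $d_h(\AEns_m,\AEns_n)\le\beta^{-m}$ for all $n\ge m$, and since $\beta>1$ this bound tends to $0$, so given $\epsilon>0$ any $N$ with $\beta^{-N}<\epsilon$ witnesses the Cauchy property. The only steps needing genuine care, beyond the routine $\epsilon$-bookkeeping, are the two containments: that the nested, $Q$-bounded, and stabilizing structure confines the symmetric difference to the thin strip near the $y$-axis, and that the single image $h^m(c^m(S_{\varepsilon,\varepsilon}))$ already $\beta^{-m}$-approximates that whole strip. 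Both follow directly from~\eqref{eq:stabilisation} and the explicit action of $h^m\circ c^m$, so I do not anticipate any serious obstacle once these inclusions are checked cleanly.
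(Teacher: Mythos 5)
Your proposal is correct and follows essentially the same route as the paper: one inclusion is immediate from the nesting $\AEns_m\subseteq\AEns_n$, the stabilization identity~\eqref{eq:stabilisation} handles the region $x\ge\beta^{-(m+1)}$, and the steep segment $h^m(c^m(S_{\varepsilon,\varepsilon}))$ with endpoints $(0,0)$ and $(\beta^{-m},1)$ absorbs the remaining thin strip near the $y$-axis. Your explicit uniform bound $d_h(\AEns_m,\AEns_n)\le\beta^{-m}$ is a slightly cleaner packaging of the same estimate the paper obtains via $d((x,y),\AEns_m)\le x+y/\beta^m$.
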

\begin{proof}
Let $\epsilon>0$ and take $n>m$. 
We must show that $\AEns_m \subset [\AEns_n]_\epsilon$ and $\AEns_n \subset [\AEns_m]_\epsilon$.
The first inclusion is easy. 
Indeed, since $\AEns_m\subset\AEns_n$, we directly have that $[\AEns_n]_\epsilon$ contains $\AEns_m$.
Let us show the second inclusion. 
From \eqref{eq:stabilisation}, $\AEns_m$ and consequently $[\AEns_m]_\epsilon$ both contain $\AEns_n \cap ([1/\beta^{m+1},1]\times [0,1])$. 
Now we show that $[\AEns_m]_\epsilon$ contains $[0,1/\beta^{m+1}) \times [0,1]$ if $m$ is sufficiently large, which ends the proof.
By Definition~\ref{def:A0}, $\AEns_0$ contains the segment $S_{\varepsilon,\varepsilon}$ of slope $1$ with endpoints $(0,0)$ and $(1,1)$. 
Thus, by Definition~\ref{def:An}, $\AEns_m$ contains the segment $h^m(c^m(S_{\varepsilon,\varepsilon}))$ of slope $\beta^m$ with endpoints $(0,0)$ and $(1/\beta^m,1)$. 
Let $(x,y) \in [0,1/\beta^{m+1}) \times [0,1]$. 
Then $(y/\beta^m,y)$ belongs to $ h^m(c^m(S_{\varepsilon,\varepsilon})) \subset \AEns_m$. 
Consequently, 
$$
d((x,y),\AEns_m) \le d((x,y),(y/\beta^m,y)) \le x + y/\beta^m < \epsilon
$$
if $m$ is sufficiently large. 
\end{proof}

\begin{definition} 
Since the sequence $(\AEns_n)_{n\ge 0}$ is a Cauchy sequence in the complete metric space $(\mathcal{H}(\mathbb{R}^2),d_h)$, its limit is a well-defined compact set denoted by $\Llim$. 
\end{definition}

\begin{example}\label{exa:approx-pour-Fib}
Let $\varphi$ be the golden ratio.  
We have represented in Figure~\ref{fig:approx-pour-Fib} all the segments of $\mathcal{A}^\varphi_0$ for words of length at most $10$ and we have applied the maps $h^j(c^i(\cdot))$ to this set of segments for $0 \le j\le i\le 4$. 
Thus we have an approximation of $\mathcal{A}^\varphi_4$. 
\begin{figure}[h!tb]
   \centering
	\includegraphics[scale=0.4]{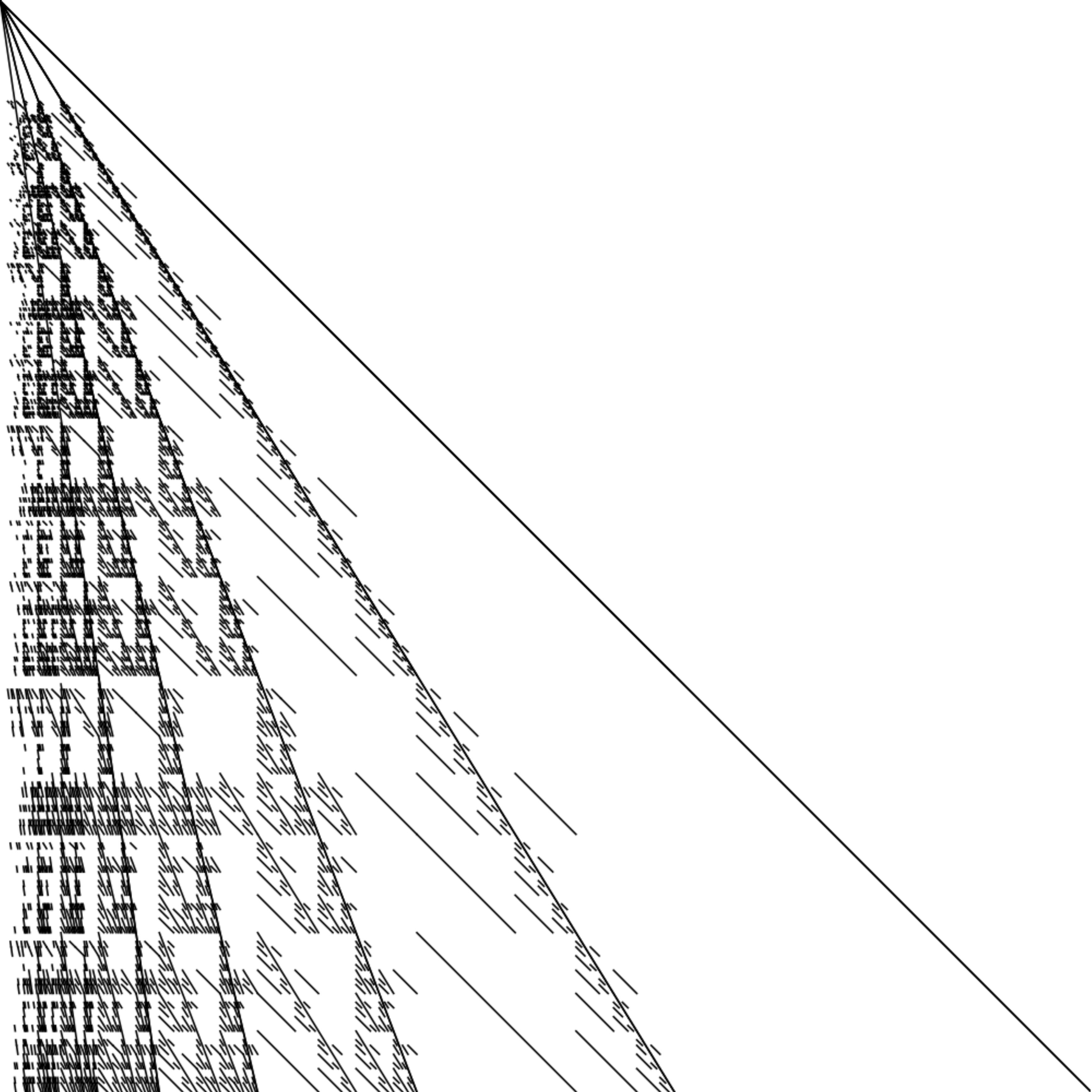}
   \caption{An approximation of the limit set $\mathcal{L}^\varphi$.}
   \label{fig:approx-pour-Fib}
\end{figure}
\end{example}


\section{The limit of the sequence of compact sets $(\U_n)_{n\ge 0}$}\label{sec:limite}

In this section, we show that the sequence $(\U_n)_{n\ge 0}$ of compact subsets of $[0,1]\times [0,1]$ also converges to $\Llim$. 
The proofs of Lemma~\ref{lem:first_inclusion}, Lemma~\ref{lem:Un} are essentially the same as the ones from~\cite{LRS1} (\cite[Lemma 27, Lemma 28, Theorem 29]{LRS1}).
However we recall them so that the paper is self-contained. 
The first part is to show that, when $\epsilon$ is a positive real number, then $\U_n\subset [\Llim]_\epsilon$ for all sufficiently large $n$. 

\begin{lemma}\label{lem:first_inclusion} 
Let $\epsilon > 0$. 
For all sufficiently large $n\in\mathbb{N}$, we have
$$
\U_n\subset [\Llim]_\epsilon.
$$
\end{lemma}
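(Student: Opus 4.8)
The plan is to prove that, for all large $n$, every square region making up $\U_n$ lies in $[\Llim]_\epsilon$; as $\U_n$ is a finite union of such squares this yields the claim. I will repeatedly use two facts. First, $(\AEns_n)_{n\ge 0}$ is increasing and converges to $\Llim$, so $\AEns_n\subset\Llim$ for every $n$; in particular $c^r(\AEns_0)=h^0(c^r(\AEns_0))\subset\AEns_r\subset\Llim$ and $\AEns_0\subset\Llim$. Second, by~\eqref{eq:conv-beta-U} one has $U_\beta(i)/U_\beta(n)\to\beta^{\,i-n}$, so that for an odd pair $(u,v)$ the lower-left corner $\big(\val_{U_\beta}(v)/U_\beta(n),\val_{U_\beta}(u)/U_\beta(n)\big)$ of its square satisfies, with $r:=n-|u|$,
$$
\Big(\tfrac{\val_{U_\beta}(v)}{U_\beta(n)},\tfrac{\val_{U_\beta}(u)}{U_\beta(n)}\Big)=c^{\,r}\big(A_{u,v}\big)+o(1)\qquad(n\to\infty),
$$
the error being uniform once $r$ stays bounded; the square itself has diameter $\sqrt2/U_\beta(n)\to 0$.

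I would then split the squares by the size of their first coordinate. Recall that an odd pair satisfies $|v|\le|u|$ and in fact $\val_{U_\beta}(v)\le\val_{U_\beta}(u)$, so every square lies weakly below the diagonal. Fix $R$ with $\beta^{-R}<\epsilon/3$. If $\val_{U_\beta}(v)/U_\beta(n)<\epsilon/2$, the square is within $\epsilon$ of the left edge $\{0\}\times[0,1]$; and this edge lies in $\Llim$, because $\AEns_0\supset S_{\varepsilon,\varepsilon}$ gives $h^m(c^m(S_{\varepsilon,\varepsilon}))\subset\AEns_m\subset\Llim$, a segment joining $(0,0)$ to $(\beta^{-m},1)$ that passes within $\epsilon/2$ of any point with small abscissa — exactly the estimate already used in the proof of Proposition~\ref{prop:cauchy_seq}. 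Otherwise $\val_{U_\beta}(v)/U_\beta(n)\ge\epsilon/2$, which for large $n$ forces $n-|v|\le R$ and hence, since $|v|\le|u|\le n$, also $r=n-|u|\le R$ and $v\neq\varepsilon$. Only these finitely many deficits remain.

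For such a bounded-deficit square, the first paragraph places its corner within $o(1)$ of $c^{\,r}(A_{u,v})$. If $(u,v)$ satisfies $(\star)$, then $A_{u,v}\in S_{u,v}\subset\AEns_0$ by Definition~\ref{def:A0}, so $c^{\,r}(A_{u,v})\in c^{\,r}(\AEns_0)\subset\Llim$ and the square is in $[\Llim]_\epsilon$; this is the geometric content of Remark~\ref{rem:conv-diag}, and it covers every square sitting on a $(\star)$-diagonal, since by Proposition~\ref{pro:completion} each cell of such a diagonal is itself a $(\star)$-pair. The delicate case is an odd pair $(u,v)$, now with $v\neq\varepsilon$, that does \emph{not} satisfy $(\star)$. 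Here the whole matter reduces to the combinatorial statement
$$
d\big(A_{u,v},\AEns_0\big)=0\qquad\text{for every }(u,v)\in\Lang\times\Lang\text{ with }\tbinom{u}{v}\equiv 1\bmod 2 ,
$$
after which $c^{\,r}(A_{u,v})\in\Llim$ and the square again lands in $[\Llim]_\epsilon$. To prove the displayed identity I would approximate $(u,v)$ by $(\star)$-pairs: because $\Aut$ has finitely many states, $p(u,v)$ from Proposition~\ref{pro:def-p} is bounded, and appending suitable words of $0^*\Lang$ and invoking Corollary~\ref{cor:completion} and Proposition~\ref{pro:binom-equal-1} should produce a sequence of $(\star)$-pairs whose endpoints $A_{u',v'}$ converge to $A_{u,v}$; since $\AEns_0$ is defined as a \emph{closure}, the limit $A_{u,v}$ then belongs to it.

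The main obstacle is precisely this last identity, i.e.\ understanding the odd pairs that fail $(\star)$ — the transient or ``unstable'' cells, such as $(1010,101)$ for $\beta=\varphi$ — and showing their segment endpoints accumulate onto the $(\star)$-generated set $\AEns_0$. These cells are exactly why the closure in Definition~\ref{def:A0} cannot be dropped, and making the approximation by $(\star)$-pairs effective, and uniform as $(u,v)$ ranges over all odd non-$(\star)$ pairs of bounded deficit, is where the real work lies. By contrast, the reductions above — the passage from $\val_{U_\beta}/U_\beta(n)$ to $\beta$-values through~\eqref{eq:conv-beta-U}, the left-edge estimate, and the inclusions $\AEns_n\subset\Llim$ — are routine.
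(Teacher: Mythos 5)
Your reduction reproduces the architecture of the paper's proof: squares attached to $(\star)$-pairs are absorbed via the endpoints $A_{u,v}\in S_{u,v}\subset\AEns_0$ and the convergence~\eqref{eq:conv-beta-U}, the small-abscissa (in particular $v=\varepsilon$) squares via the segments $h^m(c^m(S_{\varepsilon,\varepsilon}))$, and everything else hinges on the claim that $A_{u,v}\in\AEns_0$ for every odd pair $(u,v)$ that fails $(\star)$. But that claim is exactly the substance of the lemma, and you have left it unproven; worse, the route you sketch for it cannot work as stated. Corollary~\ref{cor:completion} and Proposition~\ref{pro:binom-equal-1} both take a $(\star)$-pair as \emph{hypothesis}, so they cannot manufacture $(\star)$-pairs out of an odd pair failing $(\star)$; and naively appending words of $0^*\Lang$ does not preserve oddness for such pairs --- for $\beta=\varphi$ one has $\binom{1010}{101}=1$ but $\binom{10100}{1010}=2$, which is precisely why these cells are transient.

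The missing idea is arithmetical, not automaton-theoretic. Writing $r$ for the (odd) number of occurrences of $v$ in $u$ and $i_1\ge\cdots\ge i_r$ for the number of trailing zeroes of $u$ after each occurrence, the paper picks $k$ with $2^k>|u|$ and $2^k>p(u,v)$ and computes $\binom{u0^{2^k}1}{v0^{2^k}1}=\sum_{\ell=1}^{r}\binom{2^k+i_\ell}{2^k}$; since $i_\ell\le|u|<2^k$, Lucas' theorem (Theorem~\ref{thm:lucas}) gives $\binom{2^k+i_\ell}{2^k}\equiv1\bmod 2$, so the new pair is odd, and one checks directly that $(u0^{2^k}1,v0^{2^k}1)$ satisfies $(\star)$. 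Letting $k\to\infty$, the endpoints $A_{u0^{2^k}1,\,v0^{2^k}1}$ differ from $A_{u,v}$ by $O(\beta^{-|u|-2^k})$, which supplies exactly the approximation by $(\star)$-pairs you needed (the paper actually works with the image under $c^{n-|u|}$ of a single such segment rather than with the closure of $\AEns_0$, but the effect is the same). Without this construction, or an equivalent one, your argument does not cover the non-$(\star)$ odd pairs and the proof is incomplete.
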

\begin{proof}
Let $\epsilon>0$. Take $n\in \mathbb{N}$ and let $(x,y)\in \U_n$. 
From Remark~\ref{rem:pixel}, there exists $(u,v)\in \Lang\times \Lang$ such that $\binom{u}{v}\equiv 1\bmod{2}$, $0 \le |v|\le|u|\le n$ and the point $(x,y)$ belongs to the square region
\begin{equation}
    \label{eq:particular-sq-region}
((\val_{U_\beta}(v),\val_{U_\beta}(u))+Q)/U_\beta(n) \subset \U_n.
\end{equation} 
Let us set
$$
A:= \left( \frac{\val_{U_\beta}(v)}{U_\beta(n)},\frac{\val_{U_\beta}(u)}{U_\beta(n)} \right)
$$
to be the upper-left corner of the square region~\eqref{eq:particular-sq-region} in $\U_n$.

Assume first that $(u,v)$ satisfies $(\star)$. 
The segment $S_{u,v}$ of length $\sqrt{2}\cdot \beta^{-|u|-p(u,v)}$ having $A_{u,v}=(0.0^{|u|-|v|}v,0.u)$ as endpoint belongs to $\AEns_0$. 
Now apply $n-|u|$ times the homothety $c$ to this segment. 
So the segment $c^{n-|u|}(S_{u,v})$ of length $\sqrt{2}\cdot  \beta^{-n-p(u,v)}$ of endpoint $B_1:=(0.0^{n-|v|}v,0.0^{n-|u|}u)$ belongs to $\AEns_{n-|u|}$ and thus to $\Llim$. 
Using~\eqref{eq:conv-beta-U} (the reasoning is similar to the one developed in the proof of Lemma~\ref{lem:origine-extremite}), there exists $N_1\in \mathbb{N}$ such that, for all $n\ge N_1$, $d(A,B_1)<\epsilon/2$.
Hence, for all $n\ge N_1$ such that $\sqrt{2}/U_\beta(n) < \epsilon/2$, we have 
$$
d((x,y),\Llim)
\le d((x,y),B_1)
\le d((x,y),A) + d(A,B_1) \le \sqrt{2}/U_\beta(n) + d(A,B_1)
<\epsilon.
$$

Now assume that $(u,v)$ does not satisfy $(\star)$.
Since $\binom{u}{v}\equiv 1 \bmod{2}$, then either $u$ and $v$ are non-empty words, or $u$ is non-empty and $v=\varepsilon$.
Suppose that $u$ and $v$ are non-empty. 
By assumption, we have an odd number $r$ of occurrences of $v$ in $u$. 
For each occurrence of $v$ in $u$, we count the total number of zeroes after it. 
We thus define a sequence of non-negative integer indices 
$$|u| \geq i_1 \geq i_2 \geq \cdots \geq i_r \geq 0$$
corresponding to the number of zeroes following the first, the second, ..., the $r$th occurrence of $v$ in $u$. 
Now let $k$ be a non-negative integer such that $k > \lceil \log_2 |u|\rceil$ and $2^k > p(u,v)$.
By definition of $p(u,v)$, the words $u0^{2^k}1$ and $v0^{2^k}1$ belong to $\Lang$. 
We get 
$$
\binom{u0^{2^k}1}{v0^{2^k}1} = \sum_{\ell=1}^r \binom{2^k+i_\ell}{2^k}.
$$
Indeed, for each $\ell\in\{1,\ldots,r\}$, consider the $\ell$th occurrence of $v$ in $u$: we have the factorization $u=pw$ where the last letter of $p$ is the last letter of the $\ell$th occurrence of $v$ and $|w|_0=i_\ell$. 
With this particular occurrence of $v$, we obtain occurrences of $v0^{2^k}1$ in $u0^{2^k}1$ by choosing $2^k$ zeroes among the $2^k+i_\ell$ zeroes available in $w0^{2^k}1$. 
Moreover, with the long block of $2^k$ zeroes, it is not possible to have any other occurrence of $v0^{2^k}1$ than those obtained from occurrences of $v$ in $u$.

Then, for each $\ell\in\{1,\ldots,r\}$, we have
$$
\binom{2^k+i_\ell}{2^k} \equiv 1 \bmod{2}
$$
from Theorem~\ref{thm:lucas}. 
Since $r$ is odd, we get
$$
\binom{u0^{2^k}1}{v0^{2^k}1} \equiv 1 \bmod{2}.
$$
Now, for all $k\in\mathbb{N}$ such that $k > \lceil \log_2 |u|\rceil$ and $2^k > p(u,v)$, it is easy to check that the pair of words $(u0^{2^k}1,v0^{2^k}1)$ satisfies $(\star)$. 
For the sake of simplicity, define $u_k :=  u0^{2^k}1$, $v_k := v0^{2^k}1$ and $p_k:=p(u_k,v_k)$. 
As in the first part of the proof, the segment $S_{u_k, v_k}$ of length $\sqrt{2}\cdot \beta^{-|u|-2^k-1-p_k}$ having $A_{u_k,v_k}=(0.0^{|u|-|v|}v0^{2^k}1,0.u0^{2^k}1)$ as endpoint belongs to $\AEns_0$. 
Now apply $n-|u|$ times the homothety $c$ to this segment. 
So the segment $c^{n-|u|}(S_{u_k,v_k})$ of length $\sqrt{2}\cdot \beta^{-n-2^k-1-p_k}$ of endpoint $B_2:=(0.0^{n-|v|}v0^{2^k}1,0.0^{n-|u|}u0^{2^k}1)$ belongs to $\AEns_{n-|u|}$ and thus to $\Llim$. 
Using again~\eqref{eq:conv-beta-U} and a reasoning similar to the one from the proof of Lemma~\ref{lem:origine-extremite}, there exists $N_2\in \mathbb{N}$ such that, for all $n\ge N_2$, $d(A,B_2)<\epsilon/2$.
Hence, for all $n\ge N_2$ such that $\sqrt{2}/U_\beta(n) < \epsilon/2$, we have 
$$
d((x,y),\Llim)
\le d((x,y),B_2)
\le d((x,y),A) + d(A,B_2) \le \sqrt{2}/U_\beta(n) + d(A,B_2)
<\epsilon.
$$

Assume now that $u$ is non-empty and $v=\varepsilon$.
In this case, the point $A$ is on the vertical line of equation $x=0$.
By Definition~\ref{def:A0}, $\AEns_0$ contains the segment $S_{\varepsilon,\varepsilon}$ of slope $1$ with endpoints $(0,0)$ and $(1,1)$. 
Thus, by Definition~\ref{def:An}, $\AEns_n$ contains the segment $h^n(c^n(S_{\varepsilon,\varepsilon}))$ of slope $\beta^n$ with endpoints $(0,0)$ and $(1/\beta^n,1)$. 
This segment also lies in $\Llim$. 
There exists $N_3\in \mathbb{N}$ such that, for all $n\ge N_3$, $d(A,h^n(c^n(S_{\varepsilon,\varepsilon}))) \le 1/\beta^n <\epsilon/2$.  
Consequently, for all $n\ge N_3$ such that $\sqrt{2}/U_\beta(n) < \epsilon/2$, we have 
\begin{eqnarray*}
d((x,y),\Llim)
&\le& d((x,y),h^n(c^n(S_{\varepsilon,\varepsilon})))
\le d((x,y),A) + d(A,h^n(c^n(S_{\varepsilon,\varepsilon}))) \\
&\le& \sqrt{2}/U_\beta(n) + d(A,h^n(c^n(S_{\varepsilon,\varepsilon}))) 
< \epsilon.
\end{eqnarray*}

In each of the three cases, we conclude that $(x,y)\in [\Llim]_\epsilon$, which proves that $\U_n \subset [\Llim]_\epsilon$ for all sufficiently large $n$. 
\end{proof}

If $\epsilon >0$, it remains to show that $\Llim \subset [U_n]_\epsilon$ for all sufficiently large $n \in \mathbb{N}$.
To that aim, we need to bound the number of consecutive words, in the genealogical order, that end with $0$ in $\Lang$. 

\begin{definition}\label{def:C-beta}
We let $C_\beta\in\mathbb{N}$ denote the maximal number of consecutive $0$ in $d_\beta^*(1)$, i.e.,
$$
C_\beta := \max \{ n\in \mathbb{N} \mid 0^n \text{ is a factor of } d_\beta^*(1)  \}.
$$
\end{definition}

In the next proposition, we show that the maximal number of consecutive words ending with $0$ in $\Lang$ is $C_\beta+1$.

\begin{prop}\label{pro:nbr-mots-0}
If we order the words in $\Lang$ by the genealogical order, the maximal number of consecutive words ending with $0$ in $\Lang$, i.e., the maximal number of consecutive normal $U_\beta$-representations ending with $0$, is $C_\beta +1$.
\end{prop}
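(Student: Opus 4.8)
The plan is to reduce the statement to a question about the automaton $\Aut$ and the quasi-greedy expansion $d_\beta^*(1)$ by means of the following observation about successors. If $w\in\Lang$ is non-empty and $n=\val_{U_\beta}(w0)$, then $\rep_{U_\beta}(n)=w0$ (using the Bertrand property), and I claim that $\rep_{U_\beta}(n+1)$ again ends in $0$ if and only if $w1\notin\Lang$. Indeed, if $w1\in\Lang$ then $w1$ is the normal representation of $n+1$ and ends in a nonzero digit; if $w1\notin\Lang$ then no word $wa$ with $a\ge 1$ lies in $\Lang$, so passing from $n$ to $n+1$ forces a carry into a higher position which resets the last digit to $0$. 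Consequently a maximal block of $\ell$ consecutive normal representations ending in $0$ corresponds exactly to a chain $w_0<w_1<\cdots<w_{\ell-1}$ of consecutive words of $\Lang$ (each $w_{i+1}=\mathrm{succ}(w_i)$, obtained by deleting the final $0$; these are genealogically consecutive, again by the Bertrand property) with $w_i1\notin\Lang$ for $0\le i\le\ell-2$ and $w_{\ell-1}1\in\Lang$. So it suffices to bound the number of consecutive words to which one cannot append the digit $1$.

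I would then translate ``$w1\in\Lang$'' into $\Aut$. Reading $w$ from $a_0$ lands in a state $\delta(a_0,w)=a_J$, where $J$ is the length of the longest suffix of $w$ that is a prefix of $d_\beta^*(1)$; by Proposition~\ref{pro:auto} the largest digit readable from $a_J$ equals the digit $d^*_{J+1}$ of $d_\beta^*(1)$. Hence $w1\notin\Lang$ precisely when $a_J$ has maximal out-label $0$, i.e.\ when $d^*_{J+1}=0$.

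For the upper bound I would track how the state evolves along the chain. If $\delta(a_0,w_i)=a_{J_i}$ has maximal out-label $0$, then $w_i$ ends with $t_1\cdots t_{J_i}$, this suffix is locally maximal, and computing $\mathrm{succ}(w_i)$ carries through it; the key step is to show that $\delta(a_0,w_{i+1})=a_{J_i+1}$ (the state index increases by one) as long as the carry does not overflow, and that an overflow sends the state back to $a_0$, whose maximal out-label is $t_1\ge 1$, terminating the chain. Granting this, a chain of $r$ consecutive append-$1$ failures yields states $a_{J_0},a_{J_0+1},\dots,a_{J_0+r-1}$, all with maximal out-label $0$, that is $d^*_{J_0+1}=\cdots=d^*_{J_0+r}=0$: a block of $r$ consecutive zeros in $d_\beta^*(1)$. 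Therefore $r\le C_\beta$ and the block of representations has length $\ell=r+1\le C_\beta+1$. For the matching lower bound I would run the same mechanism in reverse: fixing a maximal block $0^{C_\beta}$ of $d_\beta^*(1)$ occupying positions $k+1,\dots,k+C_\beta$ (so $d^*_k\ne 0\ne d^*_{k+C_\beta+1}$), I take the maximal word $\pref_{k+C_\beta-1}(d_\beta^*(1))$ of length $k+C_\beta-1$ together with its $C_\beta-1$ predecessors; these $C_\beta$ words have append-$1$ failures with suffix-match indices running through $k+C_\beta-1,\dots,k$, producing $C_\beta+1$ consecutive representations ending in $0$, bracketed by representations ending in a nonzero digit.

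The main obstacle is the bookkeeping in the state-evolution step, namely expressing $\delta(a_0,\mathrm{succ}(w))$ in terms of $\delta(a_0,w)$. One must verify that incrementing the digit lying just above the maximal suffix $t_1\cdots t_J$ restarts the spine of $\Aut$ so that the ensuing zeros (recall $d^*_{J+1}=0$) extend the match to state $a_{J+1}$, and separately handle the carry overflow that creates a new leading digit (a length jump), the boundary state $a_{m-1}$ (resp.\ $a_{m+k-1}$), and, when $d_\beta(1)$ is only eventually periodic, the periodic wrap-around of the indices of $d_\beta^*(1)$ relative to the finitely many states of $\Aut$; the start and end of the run must also be matched to append-$1$ successes. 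Finally, the degenerate case $C_\beta=0$ (integer base, $d_\beta^*(1)=(\beta-1)^\omega$ has no zero) should be checked directly: appending $1$ never fails, so no two consecutive representations end in $0$ and the maximum is $1=C_\beta+1$.
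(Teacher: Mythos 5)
Your opening reduction is correct and is a genuinely different route from the paper, which instead argues via lengths: there the consecutive representations ending in $0$ are shown to be maximal words of successive lengths, hence prefixes of $d_\beta^*(1)$. However, the step you flag as ``the main obstacle'' is not a matter of bookkeeping: the claim that along a chain of append-$1$ failures the suffix-match index increases by one, so that $r$ consecutive failures yield $r$ \emph{consecutive} zeros of $d_\beta^*(1)$, is false. Take the Parry number with $d_\beta(1)=20101$, so that $d_\beta^*(1)=(20100)^\omega$, $C_\beta=2$ and $U_\beta=(1,3,7,16,36,80,\dots)$. The words $w_0=2002$ and $w_1=2010$ are consecutive in $\Lang$ ($\val_{U_\beta}(2002)=34$, $\val_{U_\beta}(2010)=35$) and both fail append-$1$, since $\delta(a_0,2002)=a_1$ with $t_2=0$ and $\delta(a_0,2010)=a_4$ with $t_5=1$; this produces the block $\rep_{U_\beta}(78)=20020$, $\rep_{U_\beta}(79)=20100$, $\rep_{U_\beta}(80)=100000$. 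Here the carry does not overflow into a new leading digit, yet the state jumps from $a_1$ to $a_4$, and the two witnessed zeros of $d_\beta^*(1)$ sit at positions $2$ and $5$ with the digit at position $3$ equal to $1$ between them. So your mechanism does not produce a run of $r$ consecutive zeros, and the bound $r\le C_\beta$ (which does hold here, with equality) is not established by your argument.

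The lower-bound paragraph is also unsound, and indeed the equality asserted in the statement fails in general: for the paper's own Example~\ref{ex:p-beta-plusgrand2}, where $d_\beta^*(1)=(2100)^\omega$ and $C_\beta=2$, the predecessor of the maximal word $210$ is $202$, for which $2021\in\Lang$, and one can check that no two consecutive words of $\Lang$ both fail append-$1$, so the maximal block there has length $2<C_\beta+1$. Only the upper bound $\le C_\beta+1$ is what the paper's proof actually concludes and what is used later (in the proof of Lemma~\ref{lem:Un}), so that is the part worth salvaging; but for it your chain-of-failures mechanism needs to be replaced by an argument that does not assume the states advance along the spine of $\Aut$ one step at a time.
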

\begin{proof}
Let $n\in \mathbb{N}$ be such that $\rep_{U_\beta}(n)$ ends with $0$. 
We can suppose that $\rep_{U_\beta}(n-1)$ does not end with $0$, otherwise we translate $n$. 
If $|\rep_{U_\beta}(n+1)|=|\rep_{U_\beta}(n)|$, then $\rep_{U_\beta}(n+1)$ does not end with $0$ because $U_\beta(m)\ge 2$ for all $m \ge 1$.
Indeed, if a single digit (not the least significant one) is changed, then the value is increased by at least $2$. 
Let $C\ge 1$ be such that, for all $k\in\{0,\ldots,C\}$, $|\rep_{U_\beta}(n+k)|=|\rep_{U_\beta}(n)|+k$ and $|\rep_{U_\beta}(n+C+1)|=|\rep_{U_\beta}(n+C)|$.
The normal-$U$ representation preserves the order, i.e., for all integers $m_1$ and $m_2$, $m_1 \le m_2$ if and only if $\rep_{U_\beta}(m_1) \le \rep_{U_\beta}(m_2)$ (see, for instance,~\cite{BR}).
Thus, the words $\rep_{U_\beta}(n), \ldots, \rep_{U_\beta}(n+C-1)$ are prefixes of $d_\beta^*(1)$, respectively of length $|\rep_{U_\beta}(n)|, |\rep_{U_\beta}(n)|+1, \ldots, |\rep_{U_\beta}(n)|+C-1$ (the prefixes of $d_\beta^*(1)$ are the maximal words of different length in $\Lang$).
By Definition~\ref{def:C-beta}, we deduce that $C\le C_\beta$. 
Consequently, there are at most $C_\beta +1$ consecutive words ending with $0$ in $\Lang$. 
\end{proof}

Let us illustrate the previous proposition. 

\begin{example}
Let $\varphi$ be the golden ratio. Then $C_\varphi=1$ since $d_\varphi^*(1)=(10)^\omega$. The first few words of $L_{U_\varphi}$ are
$\varepsilon, 1, 10, 100, 101, 1000, 1001, 1010, 10000, 10001, \ldots$.
The maximal number of consecutive words ending with $0$ in $L_{U_\varphi}$ is $C_\varphi+1=2$.
\end{example}

\begin{example}\label{ex:Parry2}
Let $\beta$ be the dominant root of the polynomial $P(X)=X^4-X^3-1$. 
Then $\beta \approx 1.38028$ is a Parry number with $d_\beta(1)=1001$ and $d_\beta^*(1)=(1000)^\omega$. 
The automaton $\Aut$ is depicted in Figure~\ref{fig:Aut-beta-ex-2}. 
In this example, $C_\beta = 3$. 
The first few words of $\Lang$ are $\varepsilon, 1, 10, 100, 1000, 10000, 10001, \ldots$. 
The maximal number of consecutive words ending with $0$ in $\Lang$ is $C_\beta+1=4$.
 
\begin{figure}
\centering
\begin{tikzpicture}
\tikzstyle{every node}=[shape=circle,fill=none,draw=black,minimum size=20pt,inner sep=2pt]
\node[accepting](a) at (0,0) {$a_0$};
\node[accepting](b) at (2,0) {$a_1$};
\node[accepting](c) at (4,0) {$a_2$};
\node[accepting](d) at (6,0) {$a_3$};

\tikzstyle{every node}=[shape=circle,fill=none,draw=none,minimum size=10pt,inner sep=2pt]
\node(a0) at (-1,0) {};
\node(a1) at (0,1) {$0$};

\tikzstyle{every path}=[color =black, line width = 0.5 pt]
\tikzstyle{every node}=[shape=circle,minimum size=5pt,inner sep=2pt]
\draw [->] (a0) to [] node [] {}  (a);
\draw [->] (a) to [loop above] node [] {}  (a);

\draw [->] (a) to [] node [above] {$1$}  (b);
\draw [->] (b) to [] node [above] {$0$}  (c);
\draw [->] (c) to [] node [above] {$0$}  (d);

\draw [->] (d) to [bend right=50] node [above] {$0$}  (a);

;
\end{tikzpicture}
\caption{The automaton $\Aut$ for the dominant root $\beta$ of the polynomial $P(X)=X^4-X^3-1$.}
\label{fig:Aut-beta-ex-2}
\end{figure}
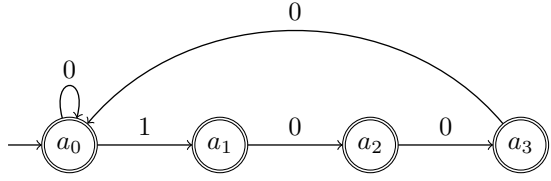
\end{example}

\begin{lemma}\label{lem:Un}
Let $\epsilon>0$. For all $(x,y)\in\Llim$, $d((x,y),\U_n)<\epsilon$ for all sufficiently large $n$.
\end{lemma}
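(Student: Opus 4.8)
The plan is to prove the inclusion reverse to Lemma~\ref{lem:first_inclusion}, namely $\Llim\subset[\U_n]_\epsilon$ for all large $n$; together with Lemma~\ref{lem:first_inclusion} this gives $d_h(\U_n,\Llim)\to 0$. Since the sets $\AEns_m$ are nested and increasing (Definition~\ref{def:An}), their Hausdorff limit satisfies $\Llim=\overline{\bigcup_m\AEns_m}$. So for a fixed $(x,y)\in\Llim$ and $\epsilon>0$ I would first pick $m_0$ and a point $h^j(c^i(s))\in\AEns_{m_0}$, with $0\le j\le i\le m_0$ and $s\in\AEns_0$, within $\epsilon/3$ of $(x,y)$. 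Because $\AEns_0=\overline{\bigcup S_{u,v}}$ and $h^j\circ c^i$ is Lipschitz with constant at most $\beta^{m_0}$, I may replace $s$ by a point $s'$ lying on an actual segment $S_{u,v}$ with $(u,v)$ satisfying $(\star)$, at the cost of another $\epsilon/3$. Everything then reduces to the following claim: for $(u,v)$ satisfying $(\star)$, any $s'\in S_{u,v}$, and fixed $i\ge j\ge 0$, the point $P:=h^j(c^i(s'))$ lies within $\epsilon/3$ of $\U_n$ for all sufficiently large $n$.

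I would treat the contraction and the stretching separately, using $h^j\circ c^i=c^{i-j}\circ(hc)^j$ with $hc\colon(x,y)\mapsto(x/\beta,y)$. For the pure contraction case $j=0$, the diagonal family $(u0^{p(u,v)}w,v0^{p(u,v)}w)$, $w\in 0^*\Lang$, has odd binomial coefficients by Corollary~\ref{cor:completion}; reading these pairs in $\U_n$ with $n=|u|+p(u,v)+|w|+i$ rather than in $\U_{|u|+p(u,v)+|w|}$ produces, exactly as in Remark~\ref{rem:conv-diag} and Lemma~\ref{lem:origine-extremite} but at a shifted scale, square regions whose corners converge to $c^i(S_{u,v})$. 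Choosing $w$ so that $\val_{U_\beta}(w)$ tracks the position of $s'$ along the diagonal then yields a square region of $\U_n$ within $\epsilon/3$ of $P$, for every large $n$.

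The genuinely new difficulty is the steep case $j\ge 1$, where $(hc)^j(S_{u,v})$ has slope $\beta^j>1$ and hence cannot be shadowed by a single slope-$1$ diagonal of touching squares at one level. Here the plan is to reproduce the staircase argument of~\cite[Lemma~28]{LRS1}: one exhibits, for each large $n$, a chain of square regions of $\U_n$ whose columns increase by one unit while the rows climb along the line of slope $\beta^j$, the relevant odd binomial coefficients coming again from Corollary~\ref{cor:completion} and from the occurrence-counting computation already used in Lemma~\ref{lem:first_inclusion}. The point where the integer bases and the Fibonacci case must be generalized is precisely the control of the \emph{vertical} jumps of this staircase: moving from one column to the next, the row index can only jump because several consecutive normal representations share the same length, and Proposition~\ref{pro:nbr-mots-0} bounds the number of consecutive words of $\Lang$ ending in $0$ by $C_\beta+1$. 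This caps every vertical gap of the chain by a fixed multiple of $1/U_\beta(n)$, which tends to $0$, so that the fixed target $P$ remains within $\epsilon/3$ of the chain once $n$ is large. I expect this gap estimate, together with checking via~\eqref{eq:conv-beta-U} that the chain really stays near the slope-$\beta^j$ line, to be the main obstacle.

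Finally I would dispose of the boundary cases. When $u=v=\varepsilon$, or more generally when the reduction produces the segment $S_{\varepsilon,\varepsilon}$ or a pair with $v=\varepsilon$, the point $P$ lies on the left edge $\{0\}\times[0,1]$ or an $h^j\circ c^i$-image of it; since $\binom{u}{\varepsilon}=1$ for every $u\in\Lang$, the whole column $v=\varepsilon$ of $\U_n$ is black, i.e.\ $\U_n$ contains the strip $[0,1/U_\beta(n)]\times[0,1]$, and the explicit segments $h^n(c^n(S_{\varepsilon,\varepsilon}))$ of slope $\beta^n$ from $(0,0)$ to $(1/\beta^n,1)$ (used in Proposition~\ref{prop:cauchy_seq} and Lemma~\ref{lem:first_inclusion}) are within $\epsilon/3$ of that strip for $n$ large. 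Collecting the three cases and applying the triangle inequality gives $d((x,y),\U_n)<\epsilon$ for all sufficiently large $n$, as required.
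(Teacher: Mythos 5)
Your proposal follows the paper's proof essentially step for step: the same three-stage approximation of $(x,y)$ by a point $h^j(c^i(s'))$ with $s'$ on an actual segment $S_{u,v}$ for a pair $(u,v)$ satisfying $(\star)$, followed by the staircase of black square regions of $\U_{n+i+|u|+p}$ built from the odd coefficients $\binom{u0^pw0^j}{v0^pw}$ supplied by Corollary~\ref{cor:completion} (together with Lemma~\ref{lem:lothaire-bin} to append the trailing $0^j$ when $w$ does not end in $0$), with the vertical gaps of the staircase controlled by Proposition~\ref{pro:nbr-mots-0} exactly as you predict. Your identification of the bound $C_\beta+1$ on runs of representations ending in $0$ as the new ingredient beyond~\cite{LRS1} is precisely what the paper's estimate $2(C_\beta+2)U_\beta(j)/U_\beta(n+i+|u|+p)$ encodes; the paper does not split the cases $j=0$ and $j\ge 1$, but that is only a presentational difference.

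The one step that would fail as written is your treatment of the boundary case $u=v=\varepsilon$. The segment $S_{\varepsilon,\varepsilon}$ is the main diagonal from $(0,0)$ to $(1,1)$, not the left edge, and for \emph{fixed} $0\le j\le i$ the image $h^j(c^i(S_{\varepsilon,\varepsilon}))$ is a segment of slope $\beta^j$ whose abscissae range over $[0,1/\beta^i]$; a point $P$ on it is in general at distance of order $1/\beta^i$ from the strip $[0,1/U_\beta(n)]\times[0,1]$, and this does not shrink as $n\to\infty$. So the ``black column $v=\varepsilon$'' argument only covers points actually lying on $\{0\}\times[0,1]$, not the images $h^j(c^i(S_{\varepsilon,\varepsilon}))$ you need. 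The repair is the one the paper uses: $(\varepsilon,\varepsilon)$ satisfies $(\star)$ with $p(\varepsilon,\varepsilon)=0$, so your main staircase argument applies verbatim once $0^*\Lang$ is replaced by $\Lang$, the odd coefficients $\binom{w}{w}=1$ being supplied by Lemma~\ref{lem:u-u} rather than Corollary~\ref{cor:completion}. Note also that a pair with $v=\varepsilon\neq u$ never satisfies $(\star)$, so that subcase cannot arise from your reduction and need not be treated at all.
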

\begin{proof}
Let $\epsilon>0$ and let $(x,y)\in\Llim$. 
Since $(\AEns_n)_{n\ge 0}$ converges to $\Llim$, there exists $N_1$ and $(x',y')\in\AEns_{N_1}$ such that,  
$$d((x,y),(x',y'))<\epsilon/4.$$
By definition of $\AEns_{N_1}$, there exist $i,j$ such that $0\le j\le i\le N_1$ and $(x_0',y_0')\in\AEns_0$ such that
$$h^j(c^i((x_0',y_0')))=(x',y').$$
By definition of $\AEns_0$, there exists a pair $(u,v)\in \Lang\times \Lang$ satisfying $(\star)$ and $(x_0'',y_0'')\in S_{u,v}$ such that
 $$d((x_0',y_0'),(x_0'',y_0''))<\epsilon/4.$$
Notice that, since $j\le i$,  
\begin{eqnarray*}
d((x',y'),h^j(c^i((x_0'',y_0''))))&=&d(h^j(c^i((x_0',y_0'))),h^j(c^i((x_0'',y_0''))))\\
&\le& d((x_0',y_0'),(x_0'',y_0''))<\epsilon/4.
\end{eqnarray*}
Consequently, we get that
$$d((x,y),h^j(c^i( (x_0'',y_0''))))<\epsilon/2.$$

In the second part of the proof, we will show that $d(h^j(c^i((x_0'',y_0''))),\U_n)<\epsilon/2$ for all sufficiently large $n$. 
We will make use of the constants $i,j$, the words $u,v$ given above and the integer $p:=p(u,v)$.

Set 
$$
L_{u,v}:=
\left\{
    \begin{array}{cl}
        \Lang, & \mbox{if } u=v=\varepsilon; \\
         0^*\Lang, & \mbox{otherwise.}
    \end{array}
\right.
$$ 
Since $(u,v)\in \Lang\times \Lang$ satisfies $(\star)$, the pair of words $(u0^pw,v0^pw)$ has an odd binomial coefficient, for all words $w \in L_{u,v}$, using Lemma~\ref{lem:u-u} and Corollary~\ref{cor:completion}. 
In particular, this is the case when $w \in L_{u,v}$ is of length $n$. 
We can choose $n$ sufficiently large such that $U_\beta(n) \ge C_\beta+3$ using Proposition~\ref{pro:nbr-mots-0}.
In this case, there exist at least two words $w\in L_{u,v}$ with $|w|=n$ and not ending with $0$. 
Furthermore, as soon as $w$ does not end with $0$, Lemma~\ref{lem:lothaire-bin} shows that
$$
\binom{u0^pw0^k}{v0^pw}=\binom{u0^pw}{v0^pw}\equiv 1\bmod{2}
$$
for all $k\ge 0$.
By definition of the sequence $U_\beta$, we also have
$$
\# \{ z \in 0^*\Lang \mid u0^pwz \in \Lang \text{ and } |z|= k \} \le U_\beta(k).
$$
Thus, for all $j\le i$, we conclude that at least one of the $U_\beta(j)$ binomial coefficients of the form $\binom{u0^pwz}{v0^pw}$ with $w$ not ending with $0$ and $|z|=j$ is odd (indeed, choose $z=0^j$ for instance). 
Otherwise stated, at least one of the square regions
\begin{equation}\label{eq:square-region}
\left( \frac{\val_{U_\beta}(v0^pw)}{U_\beta(n+i+|u|+p)}, \frac{\val_{U_\beta}(u0^pwz)}{U_\beta(n+i+|u|+p)} \right) + \frac{Q}{U_\beta(n+i+|u|+p)}, \text{ with }|z|=j,
\end{equation}
is a subset of $\U_{n+i+|u|+p}$, since $|v0^pw|,|u0^pwz| \le n+i+|u|+p$. 
This can be visualized in Figure~\ref{fig:square-regions-w-fixed}.

\begin{figure}
\vspace{-2cm}
\centering
\begin{tikzpicture}
\tikzstyle{every node}=[shape=circle,fill=none,draw=none,minimum size=10pt,inner sep=2pt]
\node(a0) at (-1,0.5) {$u0^pw$};
\node(a1) at (-4,0.5) {$j=0$};
\node(a2) at (0.5,1.5) {$v0^pw$};

\node(a3) at (-4,-1.5) {$j=1$};
\node(a4) at (-1,-1.5) {$u0^pw0$};
\node(a5) at (-1,-2.5) {$\vdots$};
\node(a6) at (2.7,-2) {$\le U_\beta(1)$};

\node(a7) at (-4,-4.5) {$j=2$};
\node(a8) at (-1,-4.5) {$u0^pw00$};
\node(a9) at (-1,-5.5) {$\vdots$};
\node(a10) at (2.7,-5.5) {$\le U_\beta(2)$};

\node(a7) at (-4,-8.5) {$j=3$};
\node(a8) at (-1,-8.5) {$u0^pw000$};
\node(a9) at (-1,-9.5) {$\vdots$};
\node(a10) at (2.7,-10.5) {$\le U_\beta(3)$};

\node(a11) at (3.5,1.5) {$\U_{n+i+|u|+p}$};

\draw[fill=black] (0,0) -- (1,0) -- (1,1) --(0,1) -- cycle;

\draw[fill=black] (0,-2) -- (1,-2) -- (1,-1) --(0,-1) -- cycle;
\draw[fill=white] (0,-3) -- (1,-3) -- (1,-2) --(0,-2) -- cycle;
\draw [->] (2,-1) to [] node [] {}  (2,-3);
\draw [->] (2,-3) to [] node [] {}  (2,-1);

\draw[fill=black] (0,-5) -- (1,-5) -- (1,-4) --(0,-4) -- cycle;
\draw[fill=white] (0,-6) -- (1,-6) -- (1,-5) --(0,-5) -- cycle;
\draw[fill=white] (0,-7) -- (1,-7) -- (1,-6) --(0,-6) -- cycle;
\draw [->] (2,-4) to [] node [] {}  (2,-7);
\draw [->] (2,-7) to [] node [] {}  (2,-4);

\draw[fill=black] (0,-9) -- (1,-9) -- (1,-8) --(0,-8) -- cycle;
\draw[fill=white] (0,-10) -- (1,-10) -- (1,-9) --(0,-9) -- cycle;
\draw[fill=white] (0,-11) -- (1,-11) -- (1,-10) --(0,-10) -- cycle;
\draw[fill=white] (0,-12) -- (1,-12) -- (1,-11) --(0,-11) -- cycle;
\draw[fill=white] (0,-13) -- (1,-13) -- (1,-12) --(0,-12) -- cycle;
\draw [->] (2,-8) to [] node [] {}  (2,-13);
\draw [->] (2,-13) to [] node [] {}  (2,-8);
;
\end{tikzpicture}
\caption{If $w$ does not end with $0$ and is such that $|w|=n$, then $\binom{u0^pw0^j}{v0^pw}$ being odd creates a square region in $\U_{n+i+|u|+p}$.}
\label{fig:square-regions-w-fixed}
\end{figure}
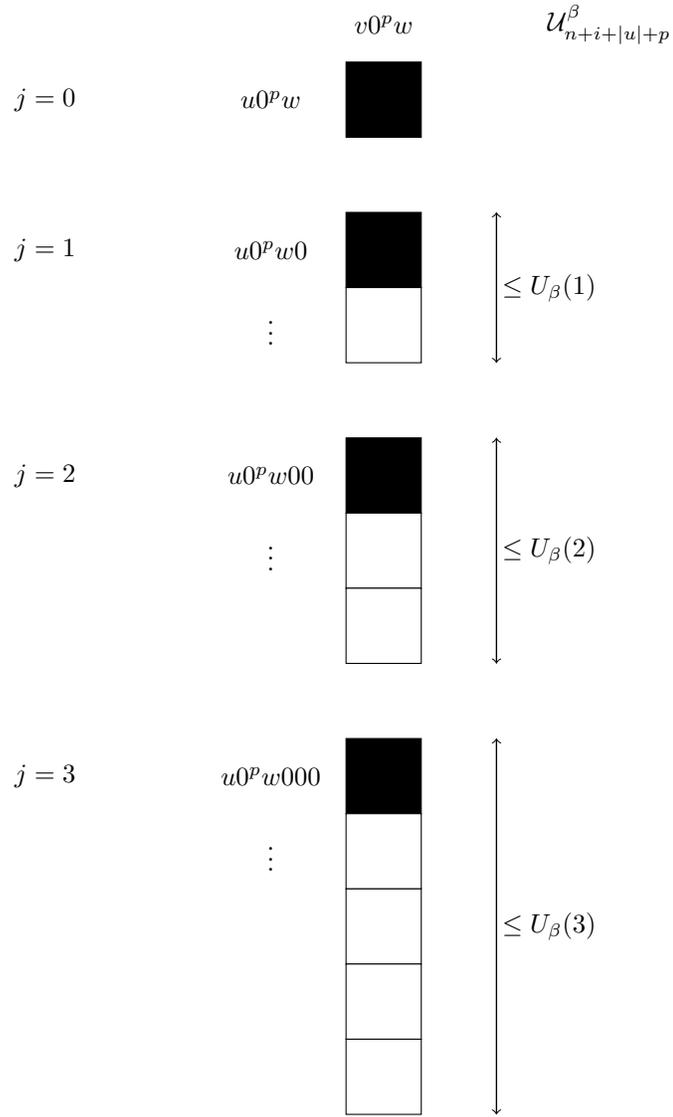

Now observe that, for any word $w\in L_{u,v}$, each square region of the form~\eqref{eq:square-region} is intersected by $h^j(c^i(S_{u,v}))$. 
Indeed, the latter segment has $A:=(0.0^{i+|u|-|v|}v,0.0^{i-j}u)$ and $B:=(0.0^{i+|u|-|v|}v0^pd_\beta^*(1),0.0^{i-j}u0^pd_\beta^*(1))$ as endpoints and slope $\beta^j$. 
Using~\eqref{eq:conv-beta-U}, if $n$ is sufficiently large, the points
$$
\left( \frac{\val_{U_\beta}(v0^p0^n)}{U_\beta(n+i+|u|+p)}, \frac{\val_{U_\beta}(u0^p0^{n+j})}{U_\beta(n+i+|u|+p)} \right) 
\left( \text{resp.,} \left( \frac{\val_{U_\beta}(v0^pd_n)}{U_\beta(n+i+|u|+p)}, \frac{\val_{U_\beta}(u0^pd_{n+j})}{U_\beta(n+i+|u|+p)} \right) \right)
$$
and $A$ (resp., $B$) are close for all $j\le i$, where $d_n$ denotes the prefix of length $n$ of $d_\beta^*(1)$ for all $n\ge 0$.
When $u$ and $v$ are non-empty, this can be seen in Figure~\ref{fig:segments-and-square-regions} where each rectangular gray region contains at least one square region from $\U_{n+i+|u|+p}$ (to draw this picture, we take the particular case of the golden ratio $\varphi$ and $i=2$).
When $u=v=\varepsilon$, Figure~\ref{fig:segments-and-square-regions} is modified in the following way: simply replace each word of the forms $u0^\ell$, $v0^\ell$ by $\varepsilon$. 

\begin{figure}
\vspace{-2cm}
\begin{center}
\begin{tikzpicture}
\node() at (-2.1,3.2) {$0$};
\node() at (15.8,3.2) {$1$};
\node() at (-1.1,-15.8) {$1$};
\node() at (10.5,1.5) {$\U_{n+2+|u|+p}$};
\node() at (2.3,1.2) {$c^2(S_{u,v})$};
\node() at (2.5,-2.8) {$h(c^2(S_{u,v}))$};
\node() at (2.6,-9.3) {$h^2(c^2(S_{u,v}))$};
\node() at (6.2,-2.8) {$c(S_{u,v})$};
\node() at (6.4,-9.3) {$h(c(S_{u,v}))$};
\node() at (12.4,-9.3) {$S_{u,v}$};

\node() at (0.3,2.7) {$v0^p0^n$};
\node() at (2.5,2.7) {$v0^pd_n$};
\node() at (4,-0.8) {$v0^p0^{n+1}$};
\node() at (7.5,-0.8) {$v0^pd_{n+1}$};
\node() at (9,-5.8) {$v0^p0^{n+2}$};
\node() at (15,-5.8) {$v0^pd_{n+2}$};

\node() at (-0.5,2.3) {$u0^p0^n$};
\node() at (-0.5,0.2) {$u0^pd_n$};
\node() at (-0.7,-1.2) {$u0^p0^{n+1}$};
\node() at (-0.7,-4.8) {$u0^pd_{n+1}$};
\node() at (-0.7,-6.2) {$u0^p0^{n+2}$};
\node() at (-0.7,-12.2) {$u0^pd_{n+2}$};

\draw [->] (-2,3) to [] node [] {}  (16,3);
\draw [->] (-2,3) to [] node [] {}  (-2,-16);

\fill[gray] (0,2.5) rectangle (0.5,2);
\fill[gray] (0.5,2) rectangle (1,1.5);
\fill[gray] (1,1.5) rectangle (1.5,1);
\fill[gray] (1.5,1) rectangle (2,0.5);
\fill[gray] (2,0.5) rectangle (2.5,0);

\fill[gray] (3.5,-1) rectangle (4,-1.5);
\fill[gray] (4,-1.5) rectangle (4.5,-2);
\fill[gray] (4.5,-2) rectangle (5,-2.5);
\fill[gray] (5,-2.5) rectangle (5.5,-3);
\fill[gray] (5.5,-3) rectangle (6,-3.5);
\fill[gray] (6,-3.5) rectangle (6.5,-4);
\fill[gray] (6.5,-4) rectangle (7,-4.5);
\fill[gray] (7,-4.5) rectangle (7.5,-5);

\fill[gray] (8.5,-6) rectangle (9,-6.5);
\fill[gray] (9,-6.5) rectangle (9.5,-7);
\fill[gray] (9.5,-7) rectangle (10,-7.5);
\fill[gray] (10,-7.5) rectangle (10.5,-8);
\fill[gray] (10.5,-8) rectangle (11,-8.5);
\fill[gray] (11,-8.5) rectangle (11.5,-9);
\fill[gray] (11.5,-9) rectangle (12,-9.5);
\fill[gray] (12,-9.5) rectangle (12.5,-10);
\fill[gray] (12.5,-10) rectangle (13,-10.5);
\fill[gray] (13,-10.5) rectangle (13.5,-11);
\fill[gray] (13.5,-11) rectangle (14,-11.5);
\fill[gray] (14,-11.5) rectangle (14.5,-12);
\fill[gray] (14.5,-12) rectangle (15,-12.5);

\fill[white] (0,-1) rectangle (0.5,-2);
\fill[gray] (0.5,-2) rectangle (1,-2.5);
\fill[white] (1,-2.5) rectangle (1.5,-3.5);
\fill[white] (1.5,-3.5) rectangle (2,-4.5);
\fill[gray] (2,-4.5) rectangle (2.5,-5);

\fill[white] (0,-6) rectangle (0.5,-7.5);
\fill[gray] (0.5,-7.5) rectangle (1,-8.5);
\fill[white] (1,-8.5) rectangle (1.5,-10);
\fill[white] (1.5,-10) rectangle (2,-11.5);
\fill[gray] (2,-11.5) rectangle (2.5,-12.5);

\fill[white] (3.5,-6) rectangle (4,-7);
\fill[gray] (4,-7) rectangle (4.5,-7.5);
\fill[white] (4.5,-7.5) rectangle (5,-8.5);
\fill[white] (5,-8.5) rectangle (5.5,-9.5);
\fill[gray] (5.5,-9.5) rectangle (6,-10);
\fill[white] (6,-10) rectangle (6.5,-11);
\fill[gray] (6.5,-11) rectangle (7,-11.5);
\fill[white] (7,-11.5) rectangle (7.5,-12.5);

\draw [very thin, gray] (0,0) grid[step=0.5] (2.5,2.5);
\draw [black] (0,2.5) -- (2.5,0);
\draw [very thin, gray] (3.5,-1) grid[step=0.5] (7.5,-5);
\draw [very thin, gray] (3.5,-1) -- (3.5,-5);
\draw [black] (3.5,-1) -- (7.5,-5);
\draw [very thin, gray] (8.5,-6) grid[step=0.5] (15,-12.5);
\draw [very thin, gray] (8.5,-6) -- (8.5,-12.5);
\draw [black] (8.5,-6) -- (15,-12.5);

\draw [very thin, gray] (0,-5) grid[step=0.5] (2.5,-1);
\draw [black] (0,-1) -- (2.5,-5);
\draw [very thin, gray] (0,-12.5) grid[step=0.5] (2.5,-6);
\draw [black] (0,-6) -- (2.5,-12.5);
\draw [very thin, gray] (3.5,-12.5) grid[step=0.5] (7.5,-6);
\draw [very thin, gray] (3.5,-12.5) -- (3.5,-6);
\draw [black] (3.5,-6) -- (7.5,-12.5);

\draw [black] (9.5,-12.5) -- (9,-13);
\draw [black] (10,-12.5) -- (10.5,-13);
\draw [->,>=stealth] (9,-13) to [] node [] {}  (10.5,-13);
\draw [->,>=stealth] (10.5,-13) to [] node [] {}  (9,-13);
\node() at (9.7,-13.4) {$\frac{1}{U_{n+2+|u|+p}}$};

\draw[very thin, black] (0,-1) rectangle (0.5,-2);
\draw[very thin, black] (1,-2.5) rectangle (1.5,-3.5);
\draw[very thin, black] (1.5,-3.5) rectangle (2,-4.5);

\draw[very thin, black] (0,-6) rectangle (0.5,-7.5);
\draw[very thin, black] (1,-8.5) rectangle (1.5,-10);
\draw[very thin, black] (1.5,-10) rectangle (2,-11.5);

\draw[very thin, black] (3.5,-6) rectangle (4,-7);
\draw[very thin, black] (4.5,-7.5) rectangle (5,-8.5);
\draw[very thin, black] (5,-8.5) rectangle (5.5,-9.5);
\draw[very thin, black] (6,-10) rectangle (6.5,-11);
\draw[very thin, black] (7,-11.5) rectangle (7.5,-12.5);
;
\end{tikzpicture}
\end{center}
\caption{The situation occurring in the proof of Lemma~\ref{lem:Un}, where we choose $\beta$ to be the golden ratio.}
\label{fig:segments-and-square-regions}
\end{figure}
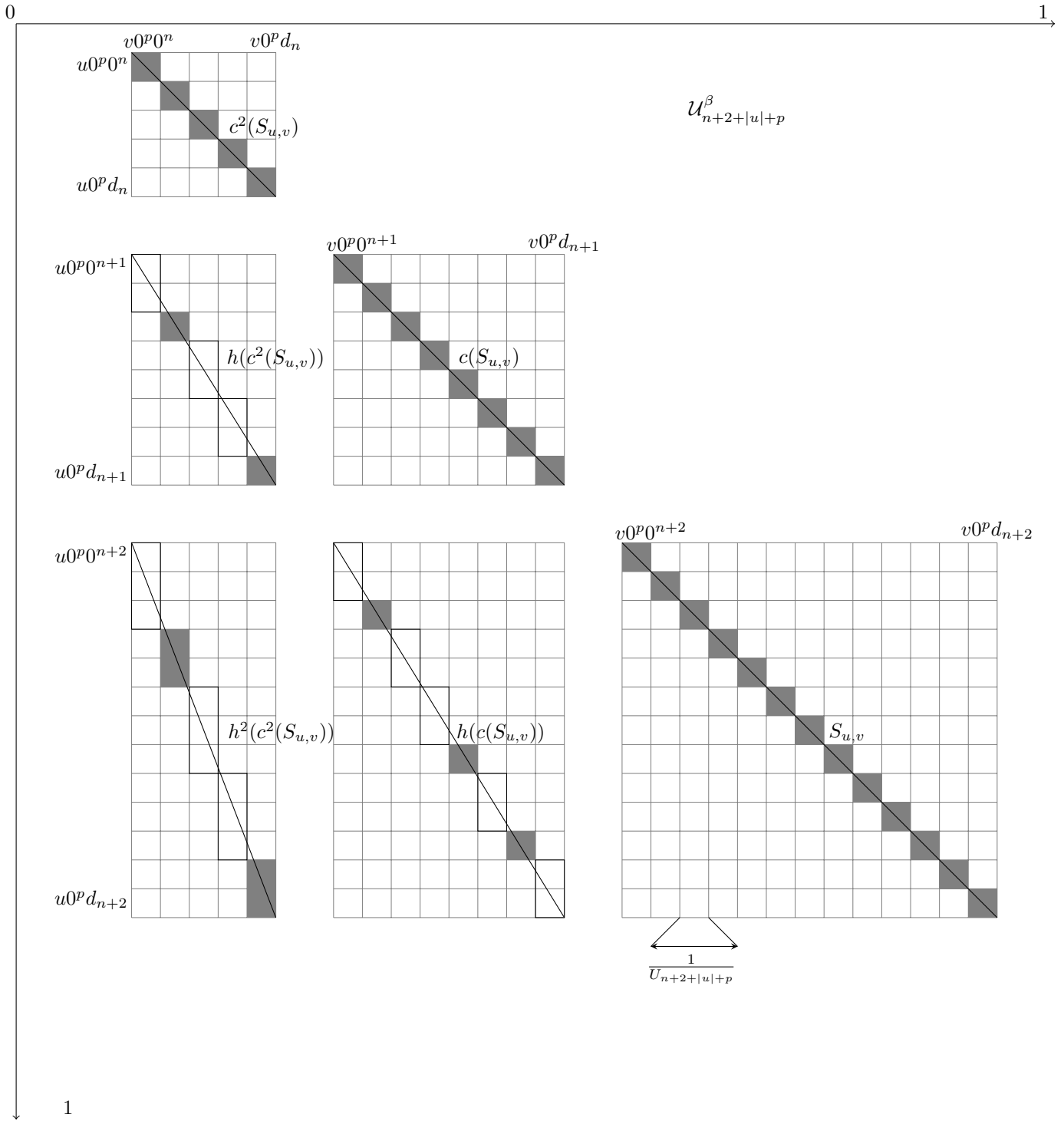

Consequently, every point of $h^j(c^i(S_{u,v}))$ is at distance at most 
$$
\frac{2\cdot (C_\beta +2) \cdot U_\beta(j)}{U_\beta(n+i+|u|+p)}
$$ 
from a point in $\U_{n+i+|u|+p}$ when $n$ is sufficiently large. 
Indeed, either the point falls into a gray region from Figure~\ref{fig:segments-and-square-regions}, or not.
In the first case, the point is at distance at most $U_\beta(j) / U_\beta(n+i+|u|+p)$ from a square region in $\U_{n+i+|u|+p}$; see Figure~\ref{fig:square-regions-w-fixed}.
Observe that this square region is of the form~\eqref{eq:square-region} where $w$ does not end with $0$.
Otherwise, the point falls into a (white) square region of the form
$$
\left( \frac{\val_{U_\beta}(v0^pw)}{U_\beta(n+i+|u|+p)}, \frac{\val_{U_\beta}(u0^pw'z)}{U_\beta(n+i+|u|+p)} \right) + \frac{Q}{U_\beta(n+i+|u|+p)}, \text{ with }|w|=|w'|=n, |z|=j.
$$
Since $n$ is large enough, there exists a word $w''$ not ending with $0$ with $|w''|=n$, which is within a distance of $2\cdot (C_\beta + 2)$ of $w$ and $w'$. 
Then, applying the argument from the previous case proves the statement. 

In particular, the result holds for the point $h^j(c^i((x_0'',y_0'')))$ belonging to $h^j(c^i(S_{u,v}))$. 
Hence, for all sufficiently large $n$, 
$$d(h^j(c^i((x_0'',y_0''))),\U_{n})<\epsilon/2.$$
The conclusion follows. 
\end{proof}

\begin{corollary}\label{cor:suite-Cauchy-point-segment}
Let $(u,v) \in \Lang \times \Lang$ satisfying $(\star)$ and let $0\le j \le i$. 
For every point $(f,g)$ of the segment $h^j(c^i(S_{u,v}))$, there exists a sequence $((f_n,g_n))_{n\ge 0}$ converging to $(f,g)$ and such that $(f_n,g_n)\in \U_n$ for all $n$.
\end{corollary}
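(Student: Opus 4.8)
The plan is to read off the desired sequence directly from the quantitative estimate already established inside the proof of Lemma~\ref{lem:Un}. Fix $(u,v)\in\Lang\times\Lang$ satisfying $(\star)$, integers $0\le j\le i$, and a point $(f,g)\in h^j(c^i(S_{u,v}))$; set $p:=p(u,v)$. The key observation is that the proof of Lemma~\ref{lem:Un} does not merely treat the single point $h^j(c^i((x_0'',y_0'')))$: its concluding estimate is uniform over the whole segment, asserting that \emph{every} point of $h^j(c^i(S_{u,v}))$ lies within distance
$$\frac{2\,(C_\beta+2)\,U_\beta(j)}{U_\beta(n+i+|u|+p)}$$
of a point of $\U_{n+i+|u|+p}$, as soon as $n$ is large enough (say $n\ge n_0$, where $n_0$ is chosen so that $U_\beta(n)\ge C_\beta+3$ and so that the approximations coming from~\eqref{eq:conv-beta-U} hold). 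First I would apply this estimate to the given point $(f,g)$.

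Next I would turn this into a sequence indexed by \emph{all} integers. Put $N:=n+i+|u|+p$; as $n$ runs through $\{n_0,n_0+1,\dots\}$, the index $N$ runs through every integer $\ge N_0:=n_0+i+|u|+p$ consecutively. For each such $N\ge N_0$, let $(f_N,g_N)$ be the point of $\U_N$ furnished by the estimate above, so that
$$d\big((f,g),(f_N,g_N)\big)\le \frac{2\,(C_\beta+2)\,U_\beta(j)}{U_\beta(N)}.$$
For the finitely many indices $N<N_0$, I would simply pick any point $(f_N,g_N)\in\U_N$; this is possible because $\U_N$ is never empty, as it always contains the normalized square region associated with the pair $(\varepsilon,\varepsilon)$, for which $\binom{\varepsilon}{\varepsilon}=1$. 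This defines a point $(f_N,g_N)\in\U_N$ for every $N\ge 0$.

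To conclude, I would note that $j$ is fixed, so the numerator $2(C_\beta+2)U_\beta(j)$ is a constant, while $U_\beta(N)\to+\infty$ as $N\to\infty$ since $U_\beta$ is an increasing sequence of positive integers. Hence the distance bound tends to $0$ along the tail, so $(f_N,g_N)\to(f,g)$; the arbitrary choices made for $N<N_0$ do not affect the limit. Relabelling the index then yields the sequence required by the statement.

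The main obstacle I anticipate is justifying the two features for which Lemma~\ref{lem:Un} is invoked: that its final distance estimate holds \emph{uniformly} for every point of the segment $h^j(c^i(S_{u,v}))$ rather than only for the particular constructed point, and that the re-indexing $N=n+i+|u|+p$ really produces a point in $\U_N$ for \emph{every} sufficiently large $N$, giving a genuine sequence rather than merely a subsequence. Both are implicit in Lemma~\ref{lem:Un} — the gray rectangular regions in Figure~\ref{fig:segments-and-square-regions} cover the entire segment, and the shift of indices by the fixed amount $i+|u|+p$ preserves consecutiveness — but they should be made explicit here. Once these points are pinned down, the argument is routine.
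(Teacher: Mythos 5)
Your proposal is correct and follows essentially the same route as the paper's own proof, which likewise extracts the uniform bound $d((f,g),\mathcal{U}^\beta_m)\le 2(C_\beta+2)U_\beta(j)/U_\beta(m)$ from the proof of Lemma~\ref{lem:Un} and concludes directly. The additional details you supply (the re-indexing $N=n+i+|u|+p$, the arbitrary choice of points for the finitely many small indices, and the non-emptiness of each $\mathcal{U}^\beta_N$) are exactly the steps the paper leaves implicit.
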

\begin{proof}
Let $(f,g)$ be a point of the segment $h^j(c^i(S_{u,v}))$.
From the proof of Lemma~\ref{lem:Un}, we have
$$
d((f,g), \U_m) \le  \frac{2\cdot (C_\beta +2) \cdot U_\beta(j)}{U_\beta(m)}
$$ 
for all sufficiently large $m$. 
Consequently, there exists a sequence $((f_n,g_n))_{n\ge 0}$ converging to $(f,g)$ and such that $(f_n,g_n)\in \U_n$ for all $n$.
\end{proof}

We are now ready to prove the main result of this paper.

\begin{theorem}\label{thm:convergence-U-vers-L}
The sequence $(\U_n)_{n\ge 0}$ converges to $\Llim$.
\end{theorem}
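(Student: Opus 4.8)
The plan is to use the standard characterization of convergence in the complete metric space $(\mathcal{H}(\mathbb{R}^2),d_h)$: the sequence $(\U_n)_{n\ge 0}$ converges to $\Llim$ if and only if, for every $\epsilon>0$, there exists $N$ such that $d_h(\U_n,\Llim)\le \epsilon$ for all $n\ge N$. Since $d_h(\U_n,\Llim)\le\epsilon$ is equivalent to the two fattening inclusions $\U_n\subset[\Llim]_\epsilon$ and $\Llim\subset[\U_n]_\epsilon$, I would prove these separately and then take the larger of the two resulting thresholds.

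The first inclusion is exactly the content of Lemma~\ref{lem:first_inclusion}: for every $\epsilon>0$ we have $\U_n\subset[\Llim]_\epsilon$ for all sufficiently large $n$, so no extra work is needed there.

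For the second inclusion, Lemma~\ref{lem:Un} yields, for each fixed point $(x,y)\in\Llim$, the estimate $d((x,y),\U_n)<\epsilon$ for all sufficiently large $n$; the subtlety is that the threshold a priori depends on the chosen point. The key step is to make this uniform over $\Llim$, and this is where compactness enters. Since $\Llim$ is compact, it is totally bounded, so I would cover it by finitely many balls $B((x_1,y_1),\epsilon/2),\ldots,B((x_k,y_k),\epsilon/2)$ with centers $(x_i,y_i)\in\Llim$. Applying Lemma~\ref{lem:Un} at each of the finitely many centers provides thresholds $N_1,\ldots,N_k$ with $d((x_i,y_i),\U_n)<\epsilon/2$ whenever $n\ge N_i$. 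Setting $N:=\max_i N_i$, any $(x,y)\in\Llim$ lies within $\epsilon/2$ of some center $(x_i,y_i)$, and the triangle inequality gives $d((x,y),\U_n)\le d((x,y),(x_i,y_i))+d((x_i,y_i),\U_n)<\epsilon$ for all $n\ge N$. Hence $\Llim\subset[\U_n]_\epsilon$ for all $n\ge N$.

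Combining both inclusions (taking the maximum of the two thresholds) shows $d_h(\U_n,\Llim)\le\epsilon$ for all sufficiently large $n$, which proves the convergence. The main obstacle is precisely the passage from the pointwise estimate of Lemma~\ref{lem:Un} to a uniform one, and compactness of $\Llim$ is essential here; equivalently, one could phrase the same argument through the sequences produced by Corollary~\ref{cor:suite-Cauchy-point-segment} applied to the finitely many segments $h^j(c^i(S_{u,v}))$ selected by a finite subcover of $\Llim$.
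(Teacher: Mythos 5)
Your proposal is correct and follows essentially the same route as the paper: the first inclusion is delegated to Lemma~\ref{lem:first_inclusion}, and the second is obtained by using compactness of $\Llim$ to extract a finite subcover and uniformize the pointwise estimate over finitely many centers. The paper phrases this uniformization via the Cauchy sequences of Corollary~\ref{cor:suite-Cauchy-point-segment} rather than by applying Lemma~\ref{lem:Un} directly at the centers, but this is only a difference in bookkeeping, not in substance.
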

\begin{proof}
Let $\epsilon>0$. 
From Lemma~\ref{lem:first_inclusion}, it suffices to show that $\Llim\subset [\U_n]_\epsilon$ for all sufficiently large $n\in\mathbb{N}$. 
For all $(x,y)\in\Llim$, using Corollary~\ref{cor:suite-Cauchy-point-segment}, there exists a (Cauchy) sequence $((f_i(x,y),g_i(x,y))_{i\ge 0}$ such that $(f_i(x,y),g_i(x,y))\in \U_i$ for all $i$, and there exists $N_{(x,y)}$ such that, for all $i,j\ge N_{(x,y)}$,
\begin{equation}
    \label{eq:1ein}
d((f_i(x,y),g_i(x,y)),(f_j(x,y),g_j(x,y)))<\epsilon/2    
\end{equation}
and $$d((f_i(x,y),g_i(x,y)),(x,y))<\epsilon/2.$$
We trivially have
$$\Llim\subset\bigcup_{(x,y)\in\Llim}B((f_{N_{(x,y)}}(x,y),g_{N_{(x,y)}}(x,y)),\epsilon/2).$$
Since $\Llim$ is compact, we can extract a finite covering: there exist $(x_1,y_1),\ldots,(x_k,y_k)$ in $\Llim$ such that
$$\Llim\subset\bigcup_{j=1}^kB((f_{N_{(x_j,y_j)}}(x_j,y_j),g_{N_{(x_j,y_j)}}(x_j,y_j)),\epsilon/2).$$
Let $N=\max_{j=1,\ldots,k} N_{(x_j,y_j)}$. 
From \eqref{eq:1ein}, we deduce that, for all $j\in\{1,\ldots,k\}$ and all $n\ge N$, 
$$B((f_{N_{(x_j,y_j)}}(x_j,y_j),g_{N_{(x_j,y_j)}}(x_j,y_j)),\epsilon/2) \subset B((f_{n}(x_j,y_j),g_{n}(x_j,y_j),\epsilon)$$
and therefore
$$\Llim\subset\bigcup_{j=1}^kB((f_{n}(x_j,y_j),g_{n}(x_j,y_j)),\epsilon)\subset [\U_n]_\epsilon.$$
\end{proof}

\begin{remark}
As in~\cite{LRS1}, the results mentioned above can be extended to any prime number. 
Let $q$ be a prime number and $r$ be a positive residue in $\{1,\ldots,q-1\}$. 
We can extend Definition~\ref{def:Un} to 
$$
\U_  {n,r}:= \frac{1}{U_\beta(n)} \bigcup \left\{(\val_{U_\beta}(v),\val_{U_\beta}(u))+Q\mid u,v\in \Lang, \binom{u}{v}\equiv r\bmod{q}\right\}\subset [0,1]\times [0,1].
$$
Since we make use of Lucas' theorem, we limit ourselves to congruences modulo a prime number. 
We just sketch the main differences with the case $q=2$. 

See, for instance, Figure~\ref{fig:FibMod3} for the case $\beta=\varphi$, $q=3$ and $r=2$.

The $(\star)$ condition from Definition~\ref{def:cond*} becomes $(\star)_r$. 
We say that $(u,v) \in \Lang \times \Lang$ satisfies the $(\star)_{r}$ condition if either $u=v=\varepsilon$ and $\binom{u}{v}\equiv r\bmod{q}$, or $|u| \ge |v| >0$ and  
$$\binom{u0^{p(u,v)}}{v0^{p(u,v)}}\equiv r\bmod{q} \quad \text{and} \quad \binom{u0^{p(u,v)}}{v0^{p(u,v)}a} = 0 \quad \forall \, a \in \Alp $$
where $p(u,v)$ is defined using Proposition~\ref{pro:def-p}. 
In this extended context, Proposition~\ref{pro:binom-equal-1}, Proposition~\ref{pro:completion}, Corollary~\ref{cor:completion}, Lemma~\ref{lem:origine-extremite} and Remark~\ref{rem:conv-diag} are easy to adapt. 
Note that the pairs $(u,v)$ satisfying this condition depend on the choice of $q$ and $r$. 
The sets $\AEns_n$ are defined as before.
The pair $(u,u)$ satisfies $(\star)_r$ if and only if $r=1$; see Lemma~\ref{lem:u-u}. 
Thus, the segment of slope $1$ with endpoints $(0,0)$ and $(1,1)$ belongs to $\AEns_0$ if and only if $r=1$. 
An alternative proof of Proposition~\ref{prop:cauchy_seq} follows the same lines as in~\cite{LRS1}.
\end{remark}

\section{Appendix}\label{sec:appendix}

\begin{example}
We have represented the set $\mathcal{U}^\varphi_9$ in Figure~\ref{fig:exemples-U-2}.
\begin{figure}[h!tb]
   \centering
   \includegraphics[scale=0.4]{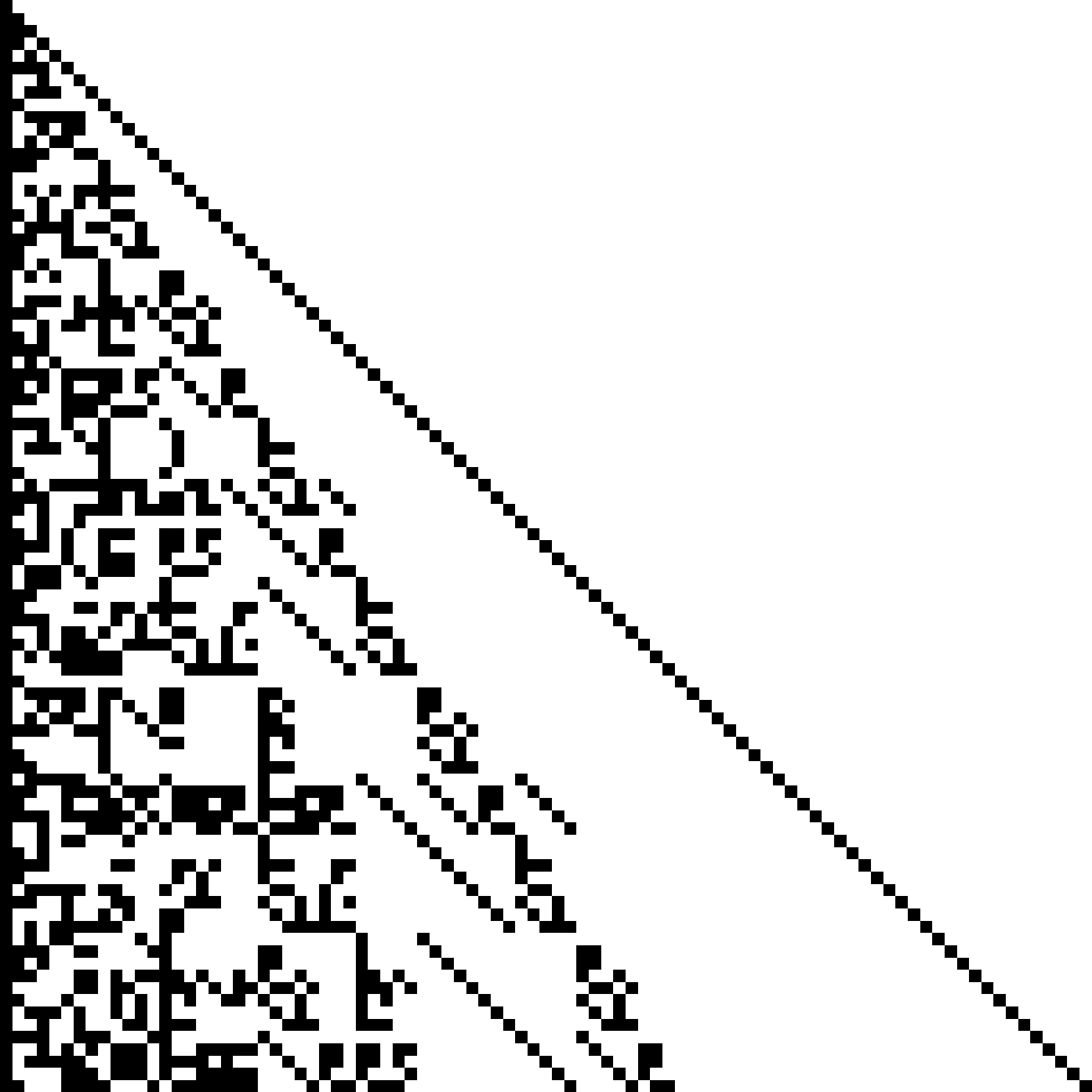}
   \caption{The set $\mathcal{U}^\varphi_9$.}
   \label{fig:exemples-U-2}
\end{figure}
\end{example}

\begin{example}
Let us consider the case when $\beta=\varphi$ is the golden ratio. We have represented in Figure~\ref{fig:FibMod3} the set $\mathcal{U}^\varphi_{9,2}$ when considering binomial coefficients congruent to $2$ modulo~$3$ and an approximation of the limit set $\mathcal{L}^\varphi$ proceeding as in Example~\ref{exa:approx-pour-Fib}.
\begin{figure}[h!tb]
   \centering
\begin{subfigure}[b]{0.45\textwidth}
\begin{center}
       \scalebox{.4}{\includegraphics{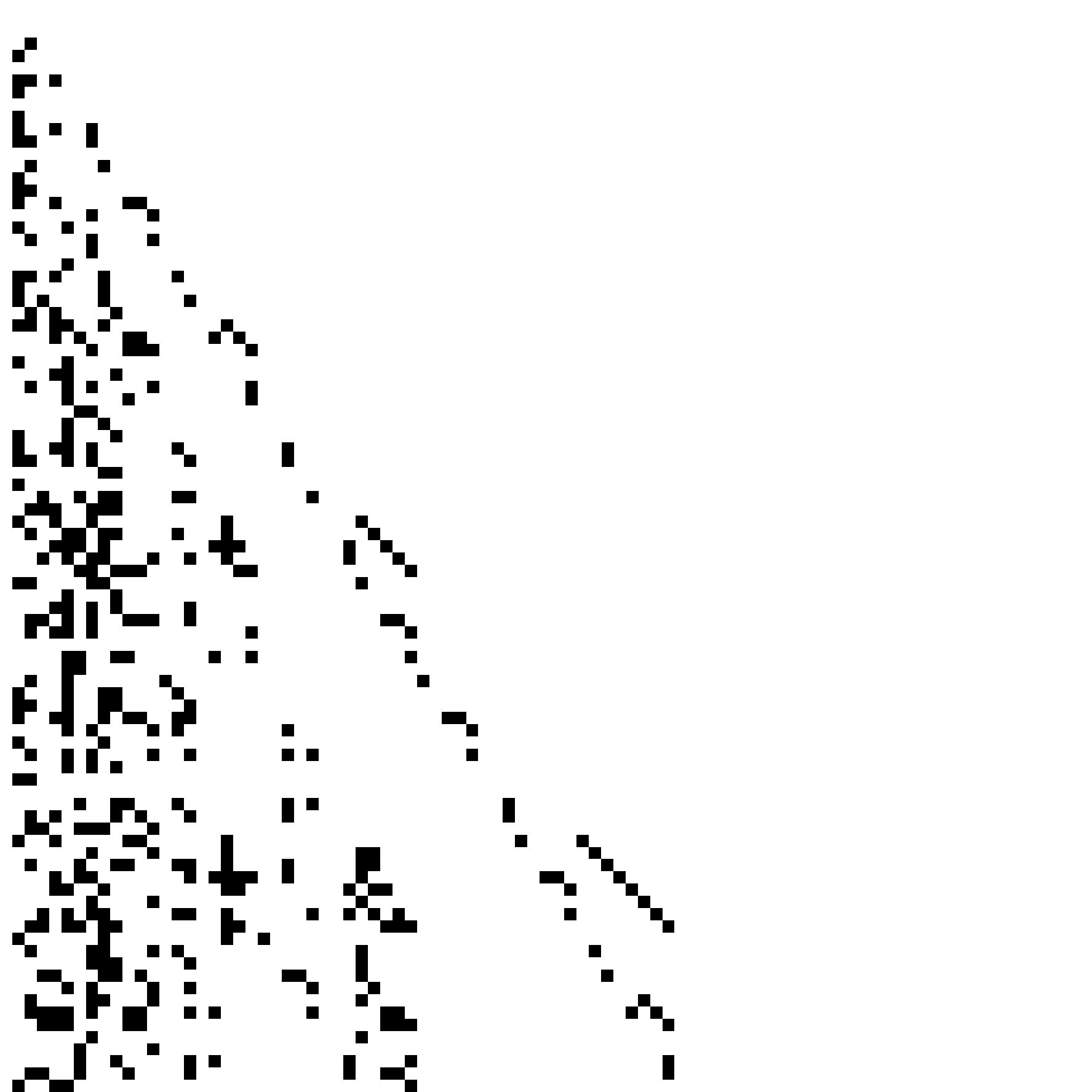}}
   \end{center}
\end{subfigure}  
\begin{subfigure}[b]{0.45\textwidth}
\begin{center}
       \scalebox{.225}{\includegraphics{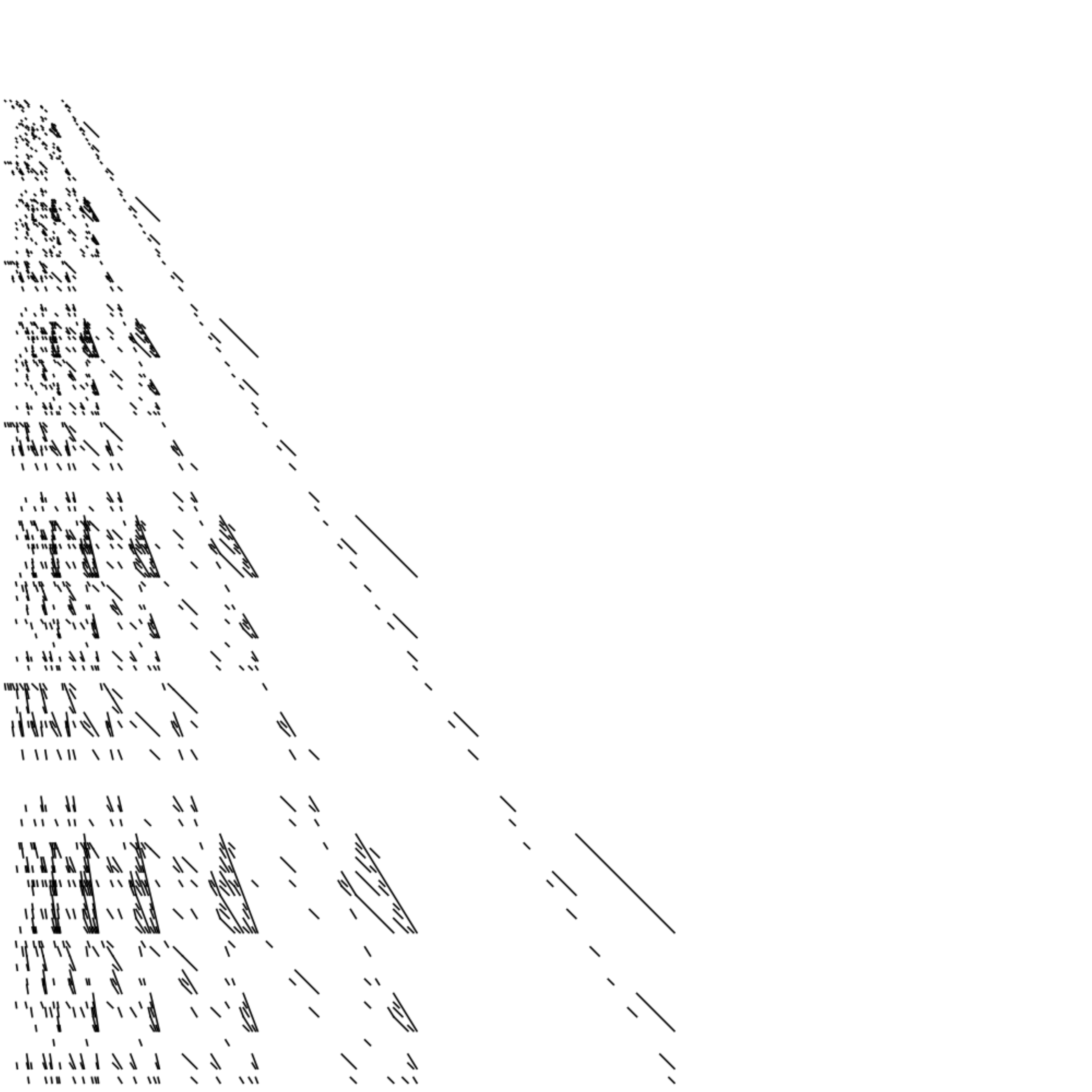}}
   \end{center}
\end{subfigure}
   \caption{The set $\mathcal{U}^\varphi_{9,2}$ (on the left) and an approximation of the corresponding limit set $\mathcal{L}^\varphi$ (on the right).}
   \label{fig:FibMod3}
\end{figure}
\end{example}

In this last example, we give an approximation of the limit object $\Llim$ for several different values of $\beta$. 
A real number $\beta >1$ is a \textit{Pisot number} if it is an algebraic integer whose conjugates have modulus less than $1$.

\begin{example}
Let us define several Parry numbers. 
Let $\beta_1\approx 2.47098$ be the dominant root of the polynomial $P(X)=X^4-2X^3-X^2-1$, which is a Parry and Pisot number; see Example~\ref{ex:p-beta-plusgrand2}.
Let $\beta_2\approx 2.47098$ be the dominant root of the polynomial $P(X)=X^4-X^3-1$, which is a Parry and Pisot number; see Example~\ref{ex:Parry2}.
Let $\beta_3\approx 2.80399$ be the dominant root of the polynomial $P(X)=X^4-2X^3-2X^2-2$. 
We can show that $\beta_3$ is a Parry number, but not a Pisot number.
Let $\beta_4\approx 1.32472$ be the dominant root of the polynomial $P(X)=X^5-X^4-1$. 
We can show that $\beta_4$ is a Parry number and also the smallest Pisot number.
In Figure~\ref{fig:SeveralApproxLimitObject}, we depict an approximation of $\Llim$ for $\beta$ in $\{\varphi^2, \beta_1, \ldots, \beta_4\}$.
 
\begin{figure}[h!tb]
   \centering
\begin{subfigure}[b]{0.45\textwidth}
\begin{center}
       \scalebox{.55}{\includegraphics{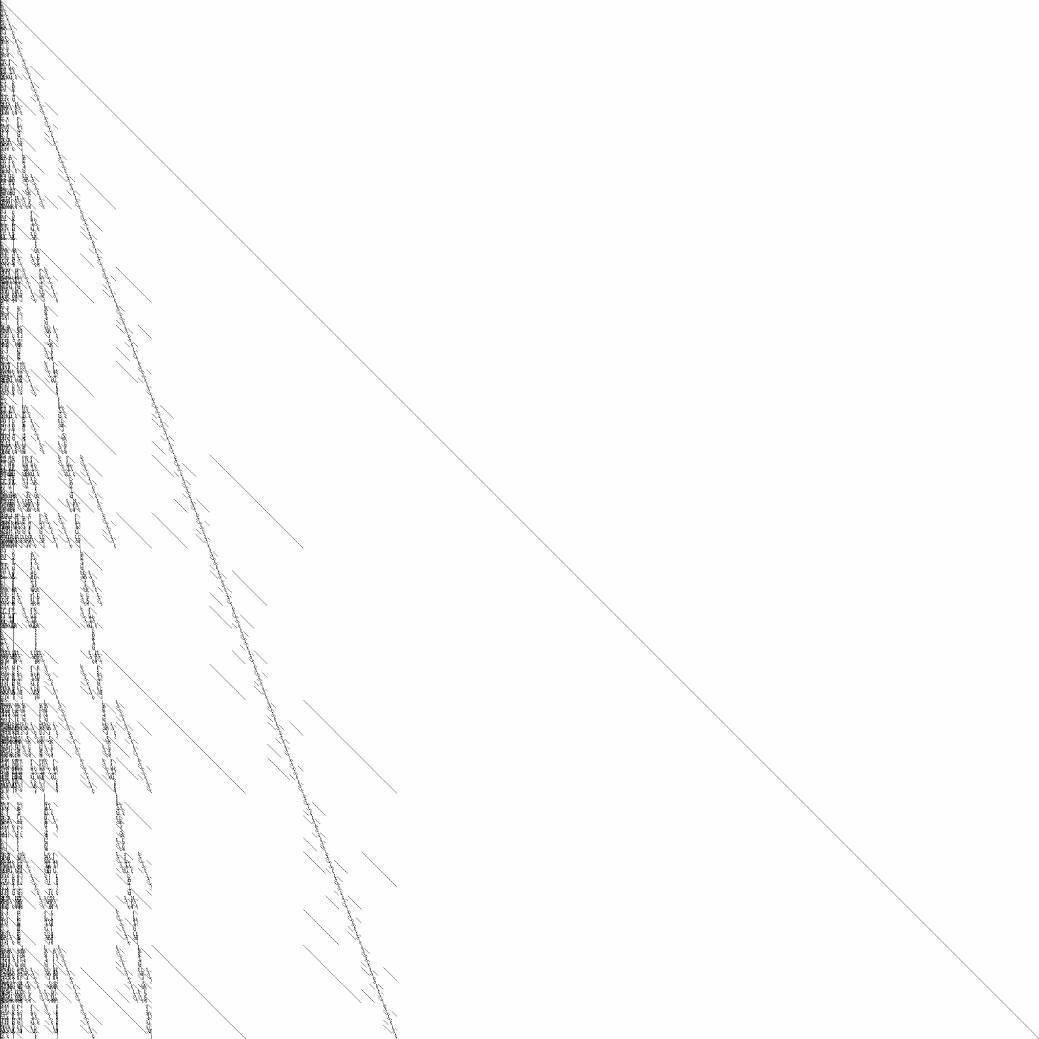}}
   \end{center}
   \caption{An approximation of $\mathcal{L}^{\varphi^2}$.}
\end{subfigure}  
\begin{subfigure}[b]{0.45\textwidth}
\begin{center}
       \scalebox{.24}{\includegraphics{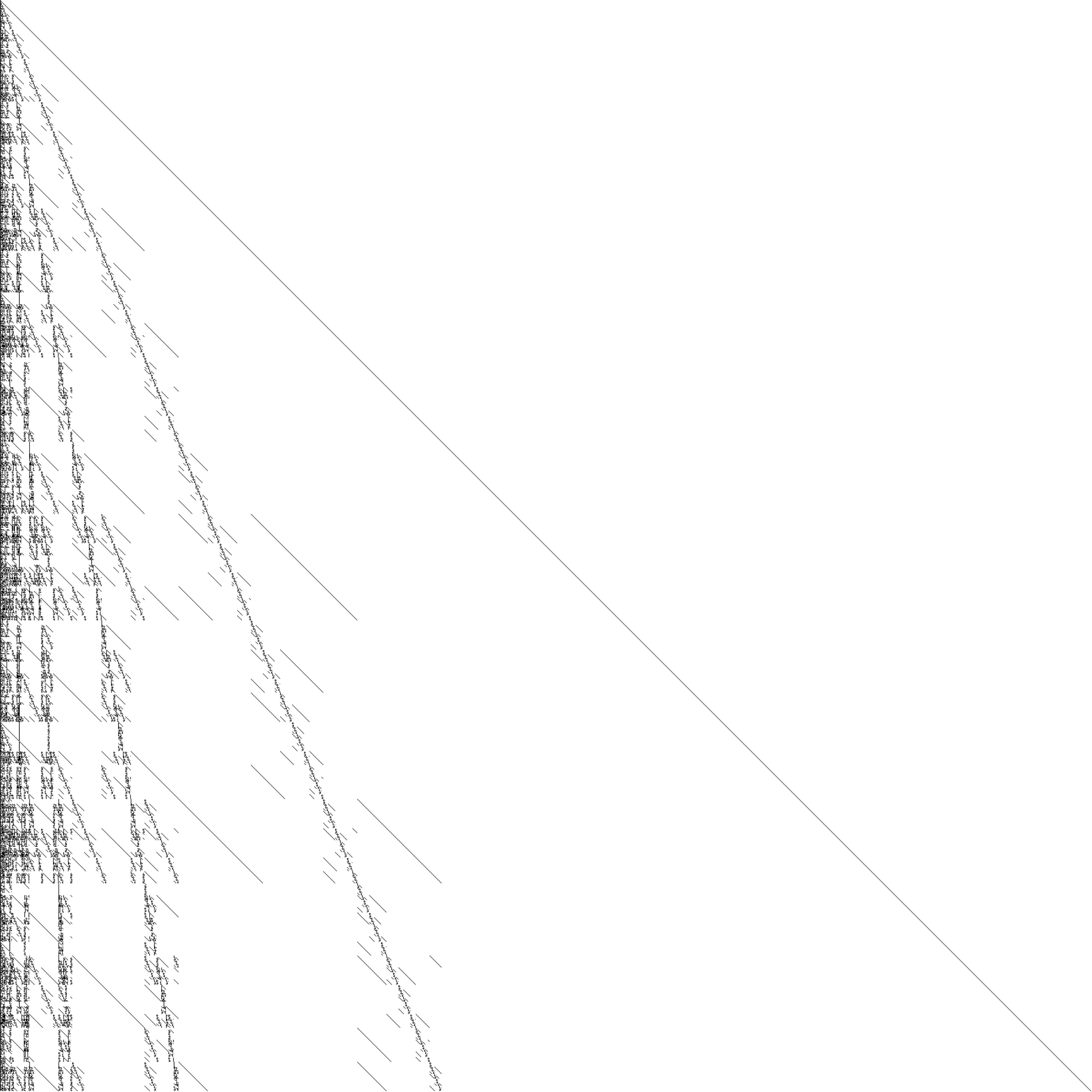}}
   \end{center}
   \caption{An approximation of $\mathcal{L}^{\beta_1}$.}
\end{subfigure}
\begin{subfigure}[b]{0.45\textwidth}
\begin{center}
       \scalebox{.245}{\includegraphics{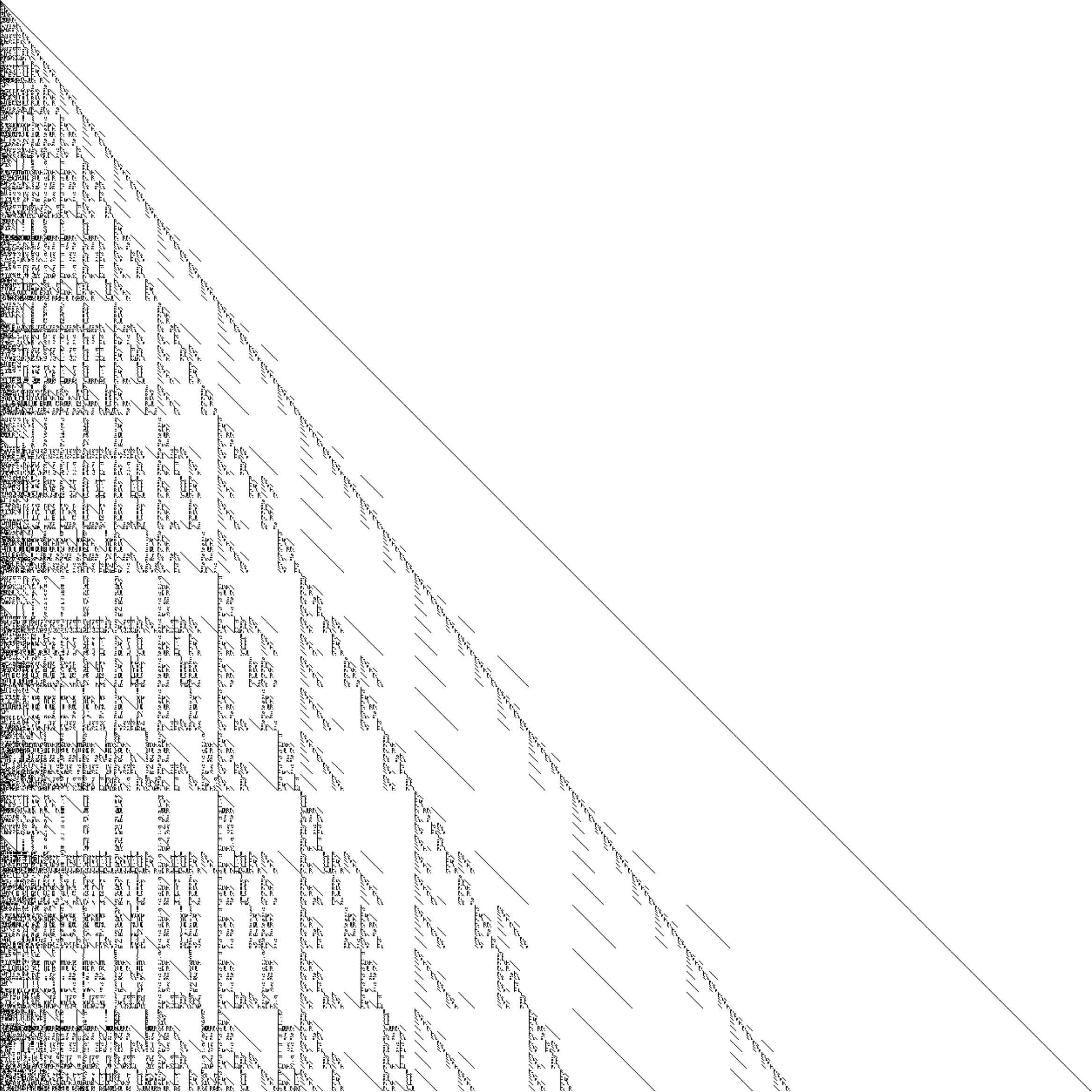}}
   \end{center}
      \caption{An approximation of $\mathcal{L}^{\beta_2}$.}
\end{subfigure}
\begin{subfigure}[b]{0.45\textwidth}
\begin{center}
       \scalebox{.25}{\includegraphics{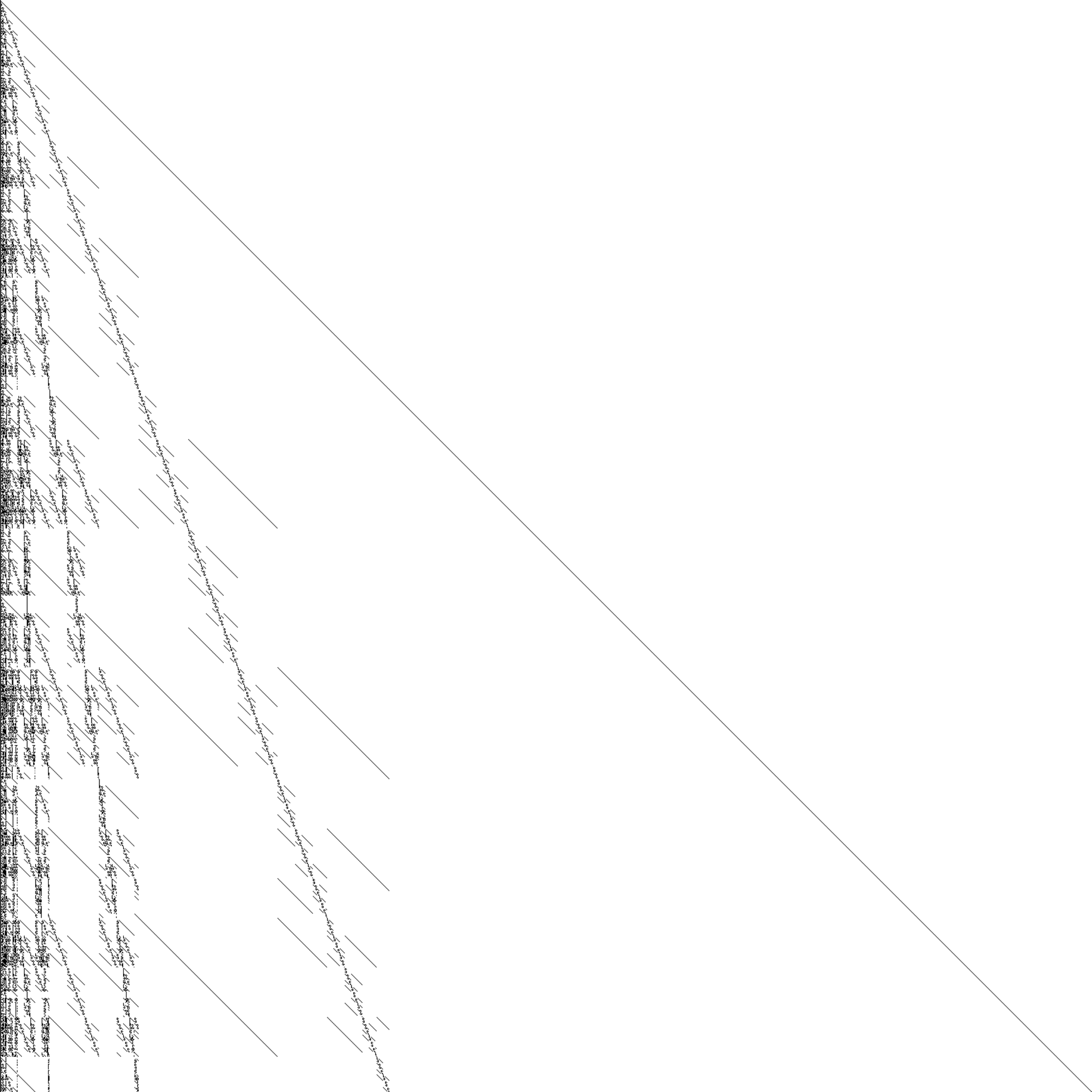}}
   \end{center}
      \caption{An approximation of $\mathcal{L}^{\beta_3}$.}
\end{subfigure}
\begin{subfigure}[b]{0.45\textwidth}
\begin{center}
       \scalebox{.215}{\includegraphics{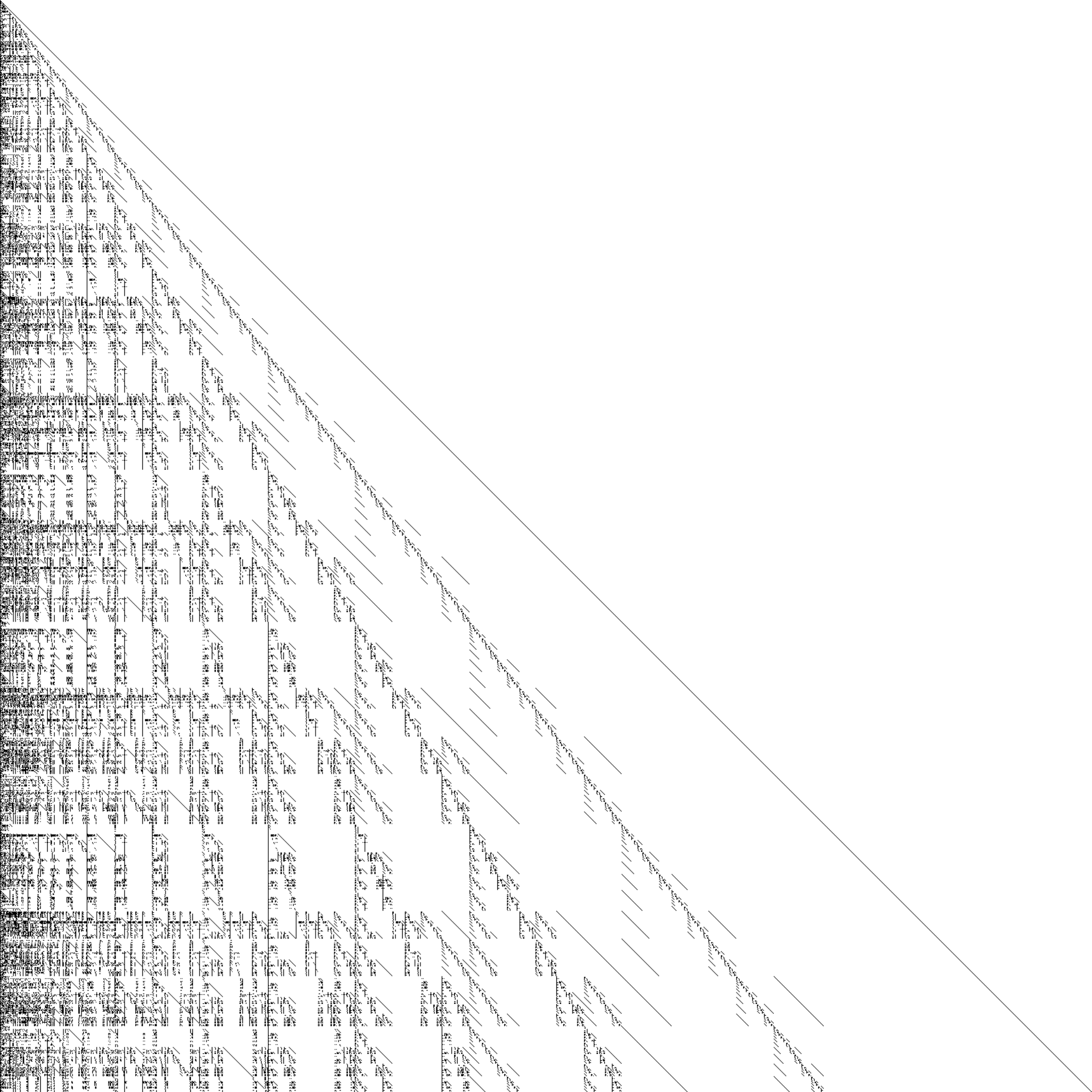}}
   \end{center}
      \caption{An approximation of $\mathcal{L}^{\beta_4}$.}
\end{subfigure}
   \caption{An approximation of the limit object $\Llim$ for different values of $\beta$.}
   \label{fig:SeveralApproxLimitObject}
\end{figure}
\end{example}

\section*{Acknowledgments}
This work was supported by a FRIA grant [grant number 1.E030.16].

The author wants to thank her advisor, Michel Rigo, and her colleague, Julien Leroy, for interesting scientific conversations, very useful comments on and improvements to a first draft of this paper.  


\end{document}